\newtheorem{lemma}{Lemma}
\newtheorem{theorem}{Theorem}
\theoremstyle{remark}
\theoremstyle{problem}
\renewcommand{\O}{\mathbb{O}}
\newcommand{\R}{\mathbb{R}}
\def \real    { \mathbb{R} }
\newcommand{\e}{\begin{equation}}
\newcommand{\ee}{\end{equation}}
\newcommand{\en}{\begin{equation*}}
\newcommand{\een}{\end{equation*}}
\newcommand{\eqn}{\begin{eqnarray}}
\newcommand{\eeqn}{\end{eqnarray}}
\newcommand{\bmat}{\begin{bmatrix}}
\newcommand{\emat}{\end{bmatrix}}
\DeclareMathAlphabet\mathbfcal{OMS}{cmsy}{b}{n}
\renewcommand{\P}[1]{\operatorname{\mathbb{P}}\left(#1\right)}
\newcommand{\E}{\operatorname{\mathbb{E}}}
\newcommand{\vct}[1]{\boldsymbol{#1}}
\newcommand{\mtx}[1]{\boldsymbol{#1}}
\newcommand{\<}{\langle}
\renewcommand{\>}{\rangle}
\newcommand{\rank}{\operatorname{rank}}
\newcommand{\dist}{\operatorname{dist}}
\newcommand{\set}[1]{\mathbb{#1}}
\DeclareMathOperator*{\argmin}{\text{arg~min}}
\def \st {\operatorname*{s.t.\ }}
\newcommand{\ol}{\overline}
\newcommand{\calA}{\mathcal{A}}
\newcommand{\calB}{\mathcal{B}}
\newcommand{\calC}{\mathcal{C}}
\newcommand{\calD}{\mathcal{D}}
\newcommand{\calE}{\mathcal{E}}
\newcommand{\calH}{\mathcal{H}}
\newcommand{\calI}{\mathcal{I}}
\newcommand{\calN}{\mathcal{N}}
\newcommand{\calP}{\mathcal{P}}
\newcommand{\calS}{\mathcal{S}}
\newcommand{\calX}{\mathcal{X}}
\newcommand{\calY}{\mathcal{Y}}
\newcommand{\vh}{\vct{h}}
\newcommand{\vr}{\vct{r}}
\newcommand{\vs}{\vct{s}}
\newcommand{\vx}{\vct{x}}
\newcommand{\vy}{\vct{y}}
\newcommand{\mA}{\mtx{A}}
\newcommand{\mB}{\mtx{B}}
\newcommand{\mD}{\mtx{D}}
\newcommand{\mE}{\mtx{E}}
\newcommand{\mG}{\mtx{G}}
\newcommand{\mR}{\mtx{R}}
\newcommand{\mU}{\mtx{U}}
\newcommand{\mX}{\mtx{X}}
\newcommand{\mY}{\mtx{Y}}
\newcommand{\mId}{{\bf I}}
\newcommand{\setI}{\set{I}}
\newlength{\imgwidth}
\newcommand{\twoCol}[2]{\ifthenelse{\boolean{twoColVersion}} {#1} {#2} }
\title{\LARGE \bf Robust Low-rank Tensor Train Recovery}
\author{Zhen Qin and Zhihui Zhu\thanks{ZQ (email: qin.660@osu.edu) and ZZ (email: zhu.3440@osu.edu)  are with the Department of Computer Science and Engineering, Ohio State University. }
}
\begin{document}

\maketitle

\begin{abstract}
Tensor train (TT) decomposition represents an $N$-order tensor using $O(N)$ matrices (i.e., factors) of small dimensions, achieved through products among these factors. Due to its compact representation, TT decomposition has found wide applications, including various tensor recovery problems in signal processing and quantum information.
In this paper, we study the problem of reconstructing a TT format tensor from measurements that are contaminated by outliers with arbitrary values. Given the vulnerability of smooth formulations to corruptions, we use an $\ell_1$ loss function to enhance robustness against outliers. We first establish the $\ell_1/\ell_2$-restricted isometry property (RIP) for Gaussian measurement operators, demonstrating that the information in the TT format tensor can be preserved using a number of measurements that grows linearly with $N$. We also prove the sharpness property for the $\ell_1$ loss function optimized over TT format tensors. Building on the $\ell_1/\ell_2$-RIP and sharpness property, we then propose two complementary methods to recover the TT format tensor from the corrupted measurements: the projected subgradient method (PSubGM), which optimizes over the entire tensor, and the factorized Riemannian subgradient method (FRSubGM), which optimizes directly over the factors. Compared to PSubGM, the factorized approach FRSubGM significantly reduces the memory cost at the expense of a slightly slower convergence rate. Nevertheless, we show that both methods, with diminishing step sizes, converge linearly to the ground-truth tensor given an appropriate initialization, which can be obtained by a truncated spectral method. To the best of our knowledge, this is the first work to provide a theoretical analysis of the robust TT recovery problem and to demonstrate that TT-format tensors can be robustly recovered even when up to half of the measurements are arbitrarily corrupted. We conduct various numerical experiments to demonstrate the effectiveness of the two methods in robust TT recovery.

\end{abstract}
\begin{keywords}
Tensor-train decomposition, Robust tensor recovery, $\ell_1/\ell_2$-RIP, Sharpness, Projected subgradient method,  factorized Riemannian subgradient method, linear convergence.
\end{keywords}

\section{Introduction}
\label{intro}

Tensor recovery has been widely investigated in many areas, such as signal processing and machine learning \cite{CichockiMagTensor15,SidiropoulosTSPTENSOR17}, communication \cite{SidiropoulosBlind}, quantum physics \cite{francca2021fast, lidiak2022quantum,qin2024quantum}, chemometrics \cite{Smilde04,AcarUnsup09}, genetic engineering \cite{HoreNature16}, and so on.  One fundamental task is to recover a tensor $\calX^\star\in\R^{d_1\times\cdots\times d_N}$ from highly incomplete, sometimes even corrupted, observations $\vy = \{y_k\}_{k=1}^m$ given by
\begin{eqnarray}
    \label{Noisy Definition of tensor sensing L1 loss}
    \vy = \calA(\calX^\star) + \vs = \begin{bmatrix}
          y_1 \\
          \vdots \\
          y_m
        \end{bmatrix} = \begin{bmatrix}
          \<\calA_1,\calX^\star\> +s_1 \\
          \vdots \\
          \<\calA_m,\calX^\star\> +s_m
        \end{bmatrix} \in\R^m,
\end{eqnarray}
where $\calA(\calX^\star): \R^{d_{1}\times  \cdots \times d_{N}}\rightarrow \R^m$ is a linear observation operator that models the measurement process and  $\vs\in\R^m$ represents an outlier vector, wherein only a small fraction of its entries (referred to as outliers) have arbitrary magnitudes but their locations are unknown a prior, while the remaining entries are zero. In practical scenarios, outliers are frequently encountered in sensing or regression models \cite{de2003framework, li2004statistical, guo2011tensor, zhou2013tensor, li2016low, li2017parsimonious, hao2020sparse, tong2022accelerating, lidiak2022quantum, qin2024quantum}, stemming from various factors such as sensor malfunctions and malicious attacks. For instance, in quantum state tomography, imperfections during quantum state preparation can randomly generate unwarranted outlier quantum states, which subsequently lead to outliers during the measurement operation \cite{li2014robust, hara2014anomaly,li2016low}.

Even in the absence of outliers, the recovery from \eqref{Noisy Definition of tensor sensing L1 loss} remains ill-posed due to the curse of dimensionality, which arises from the exponential storage complexity of $\calX^\star$ with respect to $N$.
Therefore, it is often advantageous (and even necessary) to employ certain tensor decomposition models to compactly represent the full tensor.  One commonly used model is the tensor train (TT) decomposition \cite{Oseledets11}, which expresses the $(s_1,\dots,s_N)$-th element of $\calX^\star$ as the following matrix product form \cite{Oseledets11}
\begin{eqnarray}
    \label{Definition of Tensor Train in intro}
    \calX^\star(s_1,\dots,s_N)= \mX_1^\star(:,s_1,:) \mX_2^\star(:,s_2,:) \cdots \mX_N^\star(:,s_N,:),
\end{eqnarray}
where tensor factors ${\mX}_i^\star \in\R^{r_{i-1}\times d_i \times r_i}, i\in [N]$ with $r_0=r_N=1$. The dimensions $\text{rank}(\calX^\star) = (r_1,\dots, r_{N-1})$ of such a decomposition are called the TT ranks\footnote{Any tensor can be decomposed in the TT format \eqref{Definition of Tensor Train in intro} with sufficiently large TT ranks \cite[Theorem 2.1]{Oseledets11}. Indeed, there always exists a TT decomposition with $r_i \le\min\{\Pi_{j=1}^{i}d_j, \Pi_{j=i+1}^{N}d_j\}$ for any $i\ge 1$.} of $\calX^\star$. We say a TT format tensor is low-rank if $r_i$ is much smaller compared to $\min\{\Pi_{j=1}^{i}d_j, \Pi_{j=i+1}^{N}d_j\}$ for most indices $i$ so that the total number of parameters in the tensor factors $\{\mX_i^\star\}$ is much smaller than the number of entries in $\calX^\star$. We refer to any tensor for which such a low-rank TT decomposition exists as a {\it low-TT-rank} tensor. To simplify the notation, we may also use $[\mX_1^\star,\dots, \mX_N^\star]$ as the compact form of $\calX^\star$.

Compared to the other two commonly used tensor decompositions---canonical polyadic (CP) \cite{Bro97} and Tucker \cite{Tucker66} decompositions---TT decomposition strikes a balance between the advantages of both approaches\footnote{In general, finding the optimal CP decomposition for high-order tensors can be computationally difficulty~\cite{johan1990tensor,de2008tensor}, while the Tucker decomposition becomes inapplicable for high-order tensors due to the number of parameters scaling exponentially with the tensor order.}. The number of parameters of TT decomposition is $O(N\ol d\ol r^2)$ with $\ol d = \max_{i}d_i$ and $\ol r = \max_{i} r_i$, not growing exponentially with the tensor order as the CP decomposition. Furthermore, similar to the Tucker decomposition, the TT decomposition can be approximately computed using an SVD-based algorithm, called the tensor train SVD (TT-SVD), with a guaranteed accuracy \cite{Oseledets11}. See \cite{cichocki2016tensor} for a detailed description. Consequently, TT decomposition has been widely applied to tensor recovery across various fields, including quantum tomography \cite{qin2024quantum}, neuroimaging \cite{zhou2013tensor}, facial model refinement \cite{cai2024robust}, and the distinction of its attributes \cite{llosa2022reduced},  longitudinal relational data analysis \cite{hoff2015multilinear}, and forecasting tasks \cite{liu2020low}.

\paragraph{Our goals and main results} In this paper, we study the robust recovery problem in \eqref{Noisy Definition of tensor sensing L1 loss}, where the underlying tensor $\calX^\star$ has low TT ranks. We refer to this as the robust TT recovery problem. To handle outliers in the measurements, we employ a robust $\ell_1$ loss function together with the TT format and solve the following problem:
\begin{eqnarray}
    \label{Loss Function of robust tensor sensing abstract}
    \begin{split}
    \min_{\mbox{\tiny$\begin{array}{c}
     \calX\in\R^{d_1\times\cdots\times d_N}\\
     \text{rank}(\calX) = (r_1,\dots, r_{N-1})\end{array}$}} f(\calX) = \frac{1}{m}\|\calA(\calX)-\vy\|_1.
    \end{split}
\end{eqnarray}
Compared to the conventional least-squares loss, the $\ell_1$ loss function is more robust against outliers and has been widely adopted in robust signal recovery problems   \cite{candes2011robust, li2016low,josz2018theory, duchi2019solving, charisopoulos2019composite, li2020nonconvex, charisopoulos2021low, ma2021implicit, ding2021rank,tong2022accelerating}. However, the combination of the $\ell_1$ loss function and TT decomposition makes the problem \eqref{Loss Function of robust tensor sensing abstract} highly nonsmooth and nonconvex. Our goal is to study its optimality conditions and develop optimization algorithms with guaranteed performance.

Note that measurements should satisfy certain properties to enable robust recovery from corrupted measurements. Thus, we first study the stable embedding of low-TT-rank tensors by establishing the following $\ell_1/\ell_2$-restricted isometry property ($\ell_1/\ell_2$-RIP\footnote{$\ell_1/\ell_2$-RIP differs from the $\ell_2/\ell_2$-RIP \cite{Rauhut17, qin2024quantum} which examines the relationship between $\|\calA(\calX)\|_2^2$ and $\|\calX\|_F^2$.}) without outliers for $\calA$, which has been introduced previously in the context of low-rank matrix/Tucker tensor recovery \cite{zhang2013restricted,yue2016perturbation,li2020nonconvex,tong2022accelerating,tong2022scaled} and covariance estimation \cite{chen2015exact}. This mixed-norm approximate isometry evaluates the signal strength before and after projection using different metrics: the input is measured in terms of the Frobenius norm, and the output is measured in terms of the $\ell_1$ norm. Specifically, we say $\calA$ satisfies rank-$\ol r$ $\ell_1/\ell_2$-RIP if there exits a constant $\delta_{\ol r} \in (0,\sqrt{2/\pi})$ such that
\begin{eqnarray}
    \label{L1 RIP inequality intro}
    (\sqrt{2/\pi}-\delta_{\overline{r}})\|\calX\|_F \leq \frac{1}{m}\|\calA(\calX)\|_1 \leq (\sqrt{2/\pi}+\delta_{\overline{r}})\|\calX\|_F
\end{eqnarray}
holds for all low-TT-rank tensors with ranks $(r_1,\dots, r_{N-1}), r_i \le \ol r$. We show that Gaussian measurement operators $\calA$, where $\calA_1,\dots, \calA_m$ have independent and identically distributed (i.i.d.) standard Gaussian entries, satisfies $\ell_1/\ell_2$-RIP \eqref{L1 RIP inequality intro} with high probability as long as $m\geq \Omega(N\overline{d}\overline{r}^2\log N/\delta_{\overline{r}}^2)$ with $\ol d = \max_i d_i$. This implies that robust TT recovery is possible using a number of measurements that only scale (approximately) linearly with regard to $N$. With the $\ell_1/\ell_2$-RIP property, we show that the robust loss function in \eqref{Loss Function of robust tensor sensing abstract} satisfies the sharpness property \cite{burke1993weak,marcotte1998weak,li2020nonconvex, tong2021low, tong2022accelerating,tong2022scaled}: for any low-TT-rank tensors $\calX$ with TT ranks $r_i \le \ol r$, it holds that
\begin{eqnarray}
    \label{L1 RIP property intro sharpness}
    \frac{1}{m}\|\calA(\calX - \calX^\star) -\vs \|_1 - \frac{1}{m}\|\vs\|_1 \geq ((1-2p_s)\sqrt{2/\pi} - \delta_{\overline{r}})\|\calX - \calX^\star\|_F,
\end{eqnarray}
where $p_s \in[0,0.5]$ represents the fraction of outliers in $\vy$, i.e., $p_s = \|\vs\|_0/m$.  Since \eqref{Loss Function of robust tensor sensing abstract} optimizes only over low-TT-rank tensors, \eqref{L1 RIP property intro sharpness} needs to hold only for these tensors; as such, a similar condition for Tucker tensors is also referred to as {\it restricted sharpness} in \cite{tong2022accelerating,tong2022scaled}. The sharpness condition \eqref{L1 RIP property intro sharpness} implies that $\calX^\star$ is the unique global minimum, with the loss function increasing as the variable $\calX$ deviates from $\calX^\star$.

Our second contribution is to propose two complementary iterative algorithms for solving \eqref{Loss Function of robust tensor sensing abstract}. Building on insights from \cite{Rauhut17}, we introduce a projected subgradient method (PSubGM). This method optimizes the entire tensor in each iteration and employs the TT-SVD to project the iterates back to the TT format. Under the sharpness property, we establish a robust regularity condition (RRC) for the objective function \eqref{Loss Function of robust tensor sensing abstract}. We show that the PSubGM algorithm, with appropriate initialization and diminishing step sizes, achieves a linear convergence rate. Remarkably, PSubGM can precisely recover the ground-truth tensor $\calX^\star$ even in the presence of outliers.

A potential drawback of PSubGM when handling high-order tensors is that it requires storing the full estimated tensor $\calX$ and performing TT-SVD at each iteration, which becomes impractical for large $N$, such as in quantum state tomography involving hundreds of qubits \cite{qin2024quantum}.
To address this issue, instead of optimizing directly over the tensor $\calX$, we employ the factorization approach that optimizes over the factors $\{\mX_i\}_{i\geq 1}$ which can significantly reduce the memory cost. Specifically, we consider the following optimization problem:
\begin{eqnarray}
    \label{Loss Function of robust tensor sensing tensor factor  abstract}
    \begin{split}
    \min_{\mbox{\tiny$\begin{array}{c}
     \mX_i\in\R^{r_{i-1}\times d_i\times r_i},\\
     i\in[N]\end{array}$}} &  \frac{1}{m}\|\calA([\mX_1,\dots,\mX_N])-\vy\|_1,\\
     &\st \ \sum_{s_i=1}^{d_i} \mX_i^\top(:,s_i,:)\mX_i(:,s_i,:) =\mId_{r_i},  \ \ i\in[N-1].
    \end{split}
\end{eqnarray}
The additional constraints $\sum_{s_i=1}^{d_i} \mX_i^\top(:,s_i,:)\mX_i(:,s_i,:) =\mId_{r_i}$ are introduced to reduce the scaling ambiguity of the factors \cite{qin2024guaranteed}. The orthogonality constraints can be viewed as Stiefel manifolds of Riemannian space, so we utilize a factorized Riemannian subgradient method (FRSubGM) on the Stiefel manifold to optimize \eqref{Loss Function of robust tensor sensing tensor factor  abstract}.
We show that the objective function \eqref{Loss Function of robust tensor sensing tensor factor abstract} also satisfies a Riemannian RRC, and prove that the FRSubGM algorithm, with an appropriate initialization and a diminishing step size, converges to the ground-truth tensor $\calX^\star$ at a linear rate.  Finally, we present a guaranteed truncated spectral initialization as a valid starting point, ensuring linear convergence for both the PSubGM and  FRSubGM algorithms.

In Table~\ref{Comparison among different tensor sensing}, we summarize the convergence results for PSubGM and FRSubGM and compare them with previous results on tensor recovery in the absence of outliers, specifically the IHT \cite{Rauhut17,qin2024computational} and factorized Riemannian gradient descent (FRGD) \cite{qin2024guaranteed} that solve problems similar to \eqref{Loss Function of robust tensor sensing abstract} and the factorized problem  \eqref{Loss Function of robust tensor sensing tensor factor  abstract}, with the objective function being changed to a smooth $\ell_2$ loss function. We observe that PSubGM and FRSubGM achieve a similar linear convergence rate as their smooth counterparts, demonstrating that the outliers can be handled as easily as in the noiseless case by the nonconvex optimization approaches. The convergence rate of IHT/PSubGM primarily hinges on the RIP constant, with a potential decay ($1 + c$) (where $c$ is a universal constant) owing to the expansiveness of the TT-SVD. Conversely, the convergence rate of FRGD/FRSubGM relies not only on the RIP constant but also on factors like $N$, $\ol r$, and $\calX^\star$, which could impede the convergence speed.

\begin{table}[!ht]
\renewcommand{\arraystretch}{1.4}
\begin{center}
\caption{Comparison of IHT/FRGD for solving TT recovery with noiseless measurements and PSubGM/FRSubGM for the corrupted measurements. Here $p_s$ denotes the fraction of outliers in the measurements. The upper bound of initialization is expressed in terms of $\|\calX^{(0)} - \calX^\star\|_F$. The convergence rates of IHT, PSubGM and FRGD, FRSubGM are respectively analyzed concerning $\|\calX^{(t)} - \calX^\star\|_F^2$ and $\text{dist}^2(\{\mX_i^{(t)}\},\{ \mX_i^\star\})$ defined in \eqref{BALANCED NEW DISTANCE BETWEEN TWO TENSORS}. $\delta_{\overline{r}}'\in(0,1)$ is a constant in standard $\ell_2/\ell_2$-RIP (see \cite[Theorem 2]{qin2024quantum}), while $\delta_{\overline{r}}\in(0,1)$ is a constant in $\ell_1/\ell_2$-RIP. $c$ is a universal constant.}
\label{Comparison among different tensor sensing}
{\begin{tabular}{|c||c|c|c|c|}\hline  {Algorithm} & {Outlier} &{Initialization Requirement} & {Rate of Convergence }& {RIP condition}
\\\hline\hline {IHT \cite{Rauhut17,qin2024computational}} & $\times$ &  $\frac{c\underline{\sigma}({\calX^\star})}{600N}$ & $(1+c)\big(1- \frac{(1 - \delta_{4\overline{r}}')^2}{2(1 + \delta_{4\overline{r}}')^2}\big)<1$ & $\delta_{4\overline{r}}' \leq \frac{1-\sqrt{2c/(1+c)}}{1+\sqrt{2c/(1+c)}}$
\\\hline {FRGD \cite{qin2024guaranteed}} & $\times$ &  $O\big(\frac{\underline{\sigma}^2(\calX^\star)}{\ol r N^2 \ol\sigma(\calX^\star)}\big)$ & $1 - O\big(\frac{(4-15\delta_{(N+3)\ol r}')^2}{(1 + \delta_{(N+3)\ol r}')^2N^2\ol r\kappa^2(\calX^\star)}\big)$  &  $\delta_{(N+3)\ol r}'\leq \frac{4}{15}$
\\\hline {PSubGM} & $\surd$ &  $\frac{c\underline{\sigma}({\calX^\star})}{600N}$ & $(1+c)\big(1-\frac{3((1-2p_s)\sqrt{2/\pi} - \delta_{2\overline{r}})^2}{4(\sqrt{2/\pi} + \delta_{2\overline{r}})^2}\big)<1$ & $\delta_{2\overline{r}}<\frac{1-2p_s-\sqrt{4c/(3+3c)}}{1+\sqrt{4c/(3+3c)}}\sqrt{\frac{2}{\pi}}$
\\\hline {FRSubGM} & $\surd$ &  $O\big(\frac{\underline{\sigma}^2(\calX^\star)}{\ol r N^2  \ol\sigma(\calX^\star)}\big)$ & $1- O\big(\frac{ ((1-2p_s)\sqrt{2/\pi} -  \delta_{(N+1)\overline{r}})^2 }{N^2 \ol r(\sqrt{2/\pi} + \delta_{(N+1)\overline{r}})^2\kappa^2(\calX^\star)}\big) $ & $\delta_{(N+1)\overline{r}}\leq(1-2p_s)\sqrt{\frac{2}{\pi}}$ \\\hline
\end{tabular}}{}
\end{center}
\end{table}

\paragraph{Related works}
Theoretical analyses and algorithmic designs for robust low-rank matrix recovery via nonsmooth optimization have been extensively studied in \cite{zhang2016provable,li2020nonconvex,ding2021rank,tong2021low,ma2023global}. A notable advantage of nonsmooth formulations is the enhanced robustness to adversarial outliers, achieved through a simple algorithmic design--the low-rank factors are updated in essentially the same manner, irrespective of the presence of outliers. However, existing theoretical frameworks for asymmetric matrix factorization cannot be extended to robust high-order tensor recovery, as the additional regularization terms introduced to balance the factors may not generalize to multiple tensor factors.

For tensor recovery from a limited number of measurements, most existing theoretical work and algorithmic designs have predominantly focused on developing optimization algorithms for either the noiseless case or the presence of Gaussian noise. Typically, a smooth loss function, such as the residual sum of squares ($\ell_2$ loss), is employed. Variants of projected gradient descent (PGD) algorithms, including iterative hard thresholding (IHT) \cite{Rauhut17, chen2019non, grotheer2022iterative} and Riemannian gradient descent on the fixed-rank manifold \cite{budzinskiy2021tensor, luo2022tensor}, have been studied for operating on the entire tensor with guaranteed convergence and performance. However, direct optimization over the tensor $\calX$ poses a challenge due to its exponentially large memory requirements in terms of $N$. To address this storage issue, factorization approaches \cite{TongTensor21, Han20, qin2024guaranteed} have been developed to optimize the factors of a tensor decomposition.

In contrast, tensor recovery from measurements corrupted by outliers has been less studied. Recently, the work \cite{tong2022accelerating} introduced the first provably scalable gradient descent algorithm for order-$3$ Tucker recovery from corrupted measurements. To the best of our knowledge, there is a lack of analysis and algorithmic design with guaranteed convergence for robust TT recovery.

{\bf Notation}: We use calligraphic letters (e.g., $\calY$) to denote tensors,  bold capital letters (e.g., $\mY$) to denote matrices, except for $\mX_i$ which denotes the $i$-th order-$3$ tensor factors in the TT format,  bold lowercase letters (e.g., $\vy$) to denote vectors, and italic letters (e.g., $y$) to denote scalar quantities.  Elements of matrices and tensors are denoted in parentheses, as in Matlab notation. For example, $\calX(i_1, i_2, i_3)$ denotes the element in position
$(i_1, i_2, i_3)$ of the order-3 tensor $\calX$.
The inner product of $\calA\in\R^{d_1\times\dots\times d_N}$ and $\calB\in\R^{d_1\times\dots\times d_N}$ can be denoted as $\<\calA, \calB \> = \sum_{s_1=1}^{d_1}\cdots \sum_{s_N=1}^{d_N} \calA(s_1,\dots,s_N)\calB(s_1,\dots,s_N) $.
The vectorization of  $\calX\in\R^{d_1\times\dots\times d_N}$, denoted as $\text{vec}(\calX)$, transforms the tensor $\calX$ into a vector. The $(s_1, \dots, s_N)$-th element of $\calX$ can be found in the vector $\text{vec}(\calX)$ at the position $s_1 + d_1(s_2-1) + \cdots + d_1d_2 \cdots d_{N-1}(s_N-1)$.
$\|\calX\|_F = \sqrt{\<\calX, \calX \>}$ is the Frobenius norm of $\calX$.
$\|\mX\|$ and $\|\mX\|_F$ respectively represent the spectral norm and Frobenius norm of $\mX$.
$\sigma_{i}(\mX)$ is the $i$-th singular value of $\mX$.
$\|\vx\|_2$ denotes the $\ell_2$ norm of $\vx$.
For a positive integer $K$, $[K]$ denotes the set $\{1,\dots, K \}$.
For two positive quantities $a,b\in \real$,  $b = O(a)$ means $b\leq c a$ for some universal constant $c$; likewise, $b = \Omega(a)$ represents $b\ge ca$ for some universal constant $c$.
To simplify notations in the following sections, for an order-$N$ TT format tensor with ranks $(r_1,\dots, r_{N-1})$, we define $\ol r=\max_{i=1}^{N-1} r_i$ and $\ol d=\max_{i=1}^N d_i$.

\section{$\ell_1/\ell_2$-Restricted Isometry Property and Sharpness for Robust TT Recovery}
\label{Basic Knowledge}

\subsection{Tensor train decomposition}
Recall the TT format in \eqref{Definition of Tensor Train in intro}. Considering that $\mX_{i}(:,s_i,:)$ will be extensively used, we  denote it by  $\mX_{i}(s_i)\in\R^{r_{i-1}\times r_{i}}$ as one ``slice'' of $\mX_i$ with the second index being fixed at $s_i$.
Thus, for any $\calX= [\mX_1,\dots,\mX_N ]\in\R^{d_1\times\dots\times d_N}$ in the TT format, we can express its $(s_1,\dots,s_N)$-th element as the following matrix product form
\begin{eqnarray}
    \label{Definition of Tensor Train}
    \calX(s_1,\dots,s_N)=\prod_{i=1}^N\mX_{i}(:,s_i,:) =\prod_{i=1}^N\mX_{i}(s_i).
\end{eqnarray}
We may also arrange the slices $\{ \mX_i(s_i)\}_{s_i=1}^{d_i}$ into the following form:
\begin{eqnarray}
    \label{left unfolding of tensor factor of Tensor Train}
    L(\mX_i)=\begin{bmatrix}\mX_i(1) \\ \vdots\\  \mX_i(d_i) \end{bmatrix}\in\R^{d_i r_{i-1}\times r_i}, \ \ \forall i\in[N],
\end{eqnarray}
where $L(\mX_i)$ is often referred to as the left unfolding of $\mX_i$ when viewing $\mX_i$ as a tensor.

The decomposition of the tensor $\calX$ into the form of \eqref{Definition of Tensor Train} is generally not unique: not only the factors $\mX_i(s_i)$  are not unique, but also the dimension of these factors can vary. According to \cite{holtz2012manifolds}, there exists a unique set of ranks $\vr= (r_1,\dots, r_{N-1})$ for which $\calX$ admits a minimal TT decomposition. We say the decomposition \eqref{Definition of Tensor Train} is minimal if the rank of the left unfolding matrix $L(\mX_i)$ in \eqref{left unfolding of tensor factor of Tensor Train} is $r_i$. In addition, the factors can be chosen such that $L(\mX_i)$ is orthonormal for all $i\in[N-1]$; that is
\begin{eqnarray}
    \label{left ofthgonal format of tensor factor of Tensor Train}
    L^\top(\mX_i)L(\mX_i) = \mId_{r_i}, \ \  i\in[N-1].
\end{eqnarray}
The resulting TT decomposition is called the left-orthogonal format of $\calX$. Moreover, in this case, $r_i$  equals to the rank of the $i$-th unfolding matrix $\calX^{\<i\>}\in\R^{(d_1\cdots d_k)\times (d_{k+1}\cdots d_N)}$ of the tensor $\calX$, where the $(s_1\cdots s_i, s_{i+1}\cdots s_N)$-th element \footnote{ Specifically, $s_1\cdots s_i$ and $s_{i+1}\cdots s_N$ respectively represent the $(s_1+d_1(s_2-1)+\cdots+d_1\cdots d_{i-1}(s_i-1))$-th row and $(s_{i+1}+d_{i+1}(s_{i+2}-1)+\cdots+d_{i+1}\cdots d_{N-1}(s_N-1))$-th column.
} of $\calX^{\<i\>}$ is given by $\calX^{\<i\>}(s_1\cdots s_i, s_{i+1}\cdots s_N) = \calX(s_1,\dots, s_N)$. This can also serve as an alternative way to define the TT rank.
With the $i$-th unfolding matrix $\calX^{\<i\>}$\footnote{We can also define the $i$-th unfolding matrix as $\calX^{\< i \>} = \mX^{\leq i}\mX^{\geq i+1}$, where each row of the left part $\mX^{\leq i}$ and each column of the right part $\mX^{\geq i+1}$ can be represented as $\mX^{\leq i}(s_1\cdots s_i,:) = \mX_1(s_1)\cdots \mX_i(s_i)$ and $\mX^{\geq i+1}(:,s_{i+1}\cdots s_{N}) = \mX_{i+1}(s_{i+1})\cdots \mX_N(s_{N})$. When factors are in left-orthogonal form, we have ${\mX^{\leq i}}^\top\mX^{\leq i} = \mId_{r_i}$ and $\sigma_j(\calX^{\< i\>}) = \sigma_j( \mX^{\geq i+1} ), j\in[N-1]$.} and \emph{TT ranks}, we can define its smallest singular value $\underline{\sigma}(\calX)=\min_{i=1}^{N-1}\sigma_{r_i}(\calX^{\<i\>})$, its largest singular value $\overline{\sigma}(\calX)=\max_{i=1}^{N-1}\sigma_{1}(\calX^{\<i\>})$ and condition number $\kappa(\calX)=\frac{\overline{\sigma}(\calX)}{\underline{\sigma}(\calX)}$.

\subsection{$\ell_1/\ell_2$-Restricted Isometry Property}

We first prove the $\ell_1/\ell_2$-RIP property for the robust TT recovery problem with Gaussian measurement operators, a “gold standard” for studying random linear measurements in the compressive sensing literature \cite{donoho2006compressed,candes2006robust,
candes2008introduction,recht2010guaranteed,CandsTIT11}. As previously studied in the contexts of low-rank matrix and Tucker tensor recovery problems \cite{zhang2013restricted, yue2016perturbation, li2020nonconvex, tong2022accelerating} and covariance estimation \cite{chen2015exact}, $\ell_1/\ell_2$-RIP establishes a connection between $\|\calA(\calX)\|_1$ and $\|\calX\|_F$, differing from previous work on $\ell_2/\ell_2$-RIP \cite{Rauhut17, qin2024quantum} on TT recovery problem, which examine the relationship between $\|\calA(\calX)\|_2^2$ and $\|\calX\|_F^2$.
\begin{theorem}
\label{L1 RIP}($\ell_1/\ell_2$-RIP of Gaussian measurement operators)
Suppose the linear map $\calA: \R^{d_{1}\times  \cdots \times d_{N}}\rightarrow \R^m$ is a Gaussian measurement operator where $\{\calA_k\}_{k=1}^m$ have i.i.d. standard Gaussian entries. Let $\delta_{\overline{r}}\in(0,\sqrt{2/\pi})$ be a positive constant. If the number of measurements satisfies $m\geq \Omega(N\overline{d}\overline{r}^2\log N/\delta_{\overline{r}}^2)$, then with probability exceeding $1-e^{-\Omega(N\overline{d}\overline{r}^2\log N)}$, $\calA$ satisfies the $\ell_1/\ell_2$-restricted isometry property in the sense that
\begin{eqnarray}
    \label{L1 RIP inequality}
    (\sqrt{2/\pi}-\delta_{\overline{r}})\|\calX\|_F \leq \frac{1}{m}\|\calA(\calX)\|_1 \leq (\sqrt{2/\pi}+\delta_{\overline{r}})\|\calX\|_F
\end{eqnarray}
hold for all low-TT-rank tensors $\calX$ with ranks $\vr = (r_1,\dots, r_{N-1})$.
\end{theorem}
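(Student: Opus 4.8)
The plan is to combine a pointwise concentration estimate with a covering-number argument over the set of low-TT-rank tensors, followed by a continuity bootstrap to pass from the net to all such tensors. By homogeneity of both sides of \eqref{L1 RIP inequality} in $\calX$, it suffices to establish the two-sided bound for all tensors $\calX$ with $\norm{\calX}{F}=1$ and TT ranks bounded by $\ol r$. The starting observation is that for a \emph{fixed} such $\calX$, each entry $\<\calA_k,\calX\>$ is a standard Gaussian (because $\calA_k$ has i.i.d.\ $N(0,1)$ entries and $\norm{\calX}{F}=1$), so $\frac{1}{m}\norm{\calA(\calX)}{1}=\frac{1}{m}\sum_{k=1}^m |\<\calA_k,\calX\>|$ is an average of i.i.d.\ half-normal variables with mean $\sqrt{2/\pi}$. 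Since each $|\<\calA_k,\calX\>|$ is sub-Gaussian, a Hoeffding/Bernstein-type bound gives the pointwise concentration
\begin{eqnarray*}
\P{\abs{\tfrac{1}{m}\norm{\calA(\calX)}{1}-\sqrt{2/\pi}}\geq t}\leq 2e^{-cmt^2}
\end{eqnarray*}
for a universal constant $c>0$.

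Next I would bound the covering number of the set $\calS$ of unit-Frobenius-norm tensors with TT ranks at most $\ol r$. Using the left-orthogonal parametrization \eqref{left ofthgonal format of tensor factor of Tensor Train}, every $\calX\in\calS$ is the contraction of factors $\mX_1,\dots,\mX_N$ where $L(\mX_i)$ lies on a Stiefel manifold for $i\in[N-1]$ and the last factor carries the norm so that $\norm{\mX_N}{F}=\norm{\calX}{F}=1$. Each factor thus lives in a compact set with at most $\ol d\,\ol r^2$ real parameters and admits an $\epsilon$-net of size $(C/\epsilon)^{\ol d\,\ol r^2}$. A telescoping multilinear Lipschitz estimate---replacing the factors one at a time and using orthonormality to keep the intermediate products bounded---shows that an $\epsilon$-perturbation of each factor perturbs $\calX$ by at most $O(N\epsilon)$ in Frobenius norm. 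Combining the per-factor nets yields an $\eta$-net $\calN$ of $\calS$ with
\begin{eqnarray*}
\log|\calN|\ \lesssim\ N\ol d\,\ol r^2\,\log\!\big(CN/\eta\big),
\end{eqnarray*}
which is where the eventual $\log N$ factor in $m$ originates.

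I would then take a union bound: applying the pointwise estimate with $t\asymp\delta_{\ol r}$ to all points of $\calN$ succeeds with probability at least $1-2|\calN|e^{-cm\delta_{\ol r}^2/4}$, and choosing $m\gtrsim N\ol d\,\ol r^2\log N/\delta_{\ol r}^2$ drives this to $1-e^{-\Omega(N\ol d\,\ol r^2\log N)}$, matching the claimed sample complexity and failure probability. To pass from the net to an arbitrary $\calX\in\calS$, pick the nearest net point $\calX_0$; then $\calX-\calX_0$ has TT ranks at most $2\ol r$, and the triangle inequality for $\norm{\cdot}{1}$ reduces the remaining error to controlling $\frac{1}{m}\norm{\calA(\calX-\calX_0)}{1}$, which is handled by the upper bound on rank-$2\ol r$ tensors together with $\norm{\calX-\calX_0}{F}\le\eta$.

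The main obstacle is precisely this net-to-everywhere extension. Because the functional $\calX\mapsto\frac{1}{m}\norm{\calA(\calX)}{1}$ is an $\ell_1$ quantity rather than a smoothly differentiable quadratic, and because the difference of two rank-$\ol r$ tensors is only rank-$2\ol r$, a naive Lipschitz bound would be circular---it presupposes the very upper RIP estimate one is trying to prove. The standard remedy is a bootstrapping/absorption argument: run the covering estimate on the larger rank-$2\ol r$ class, establish the upper inequality there first, and use it to absorb the approximation error $O(N\eta)$ by taking $\eta$ a small constant multiple of $\delta_{\ol r}$, thereby closing the lower inequality as well. A secondary technical point is ensuring the multilinear Lipschitz constant grows only polynomially (indeed linearly) in $N$, so that the net argument costs merely a $\log N$ factor rather than a factor polynomial in $N$ in the sample complexity.
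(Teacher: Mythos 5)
Your proposal is correct and follows essentially the same route as the paper's proof: pointwise half-normal concentration, a factor-wise $\epsilon$-net of the left-orthogonal TT parametrization with metric entropy $O(N\ol d\,\ol r^2\log N)$, a union bound, and a self-bounding absorption step to pass from the net to all low-TT-rank tensors. The only organizational difference is that the paper's absorption never leaves the rank-$\ol r$ class: by \Cref{EXPANSION_A1TOAN-B1TOBN_1}, the net-approximation error $\calX - \calX^{(p)}$ telescopes into a sum of $N$ tensors that are each still of rank $\ol r$ (one perturbed factor apiece), so the supremum being controlled reappears on the right-hand side and is absorbed directly, with no detour through the rank-$2\ol r$ class.
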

The proof is provided in {Appendix} \ref{Proof of L1 RIP in appendix}. \Cref{L1 RIP} guarantees the RIP for Gaussian measurements where the number of measurements $m$ scales linearly, rather than exponentially, with respect to the tensor order $N$. When RIP holds, then for any two distinct TT format tensors $\calX_1,\calX_2$ with TT ranks smaller than $\ol r$, we have distinct measurements since
\begin{eqnarray}
    \label{L1 RIP inequality distinct two}
    \frac{1}{m}\|\calA(\calX_1) - \calA(\calX_2)\|_1 = \frac{1}{m}\|\calA(\calX_1 - \calX_2)\|_1 \geq (\sqrt{2/\pi}-\delta_{2\overline{r}})\|\calX_1 - \calX_2\|_F,
\end{eqnarray}
which guarantees the possibility of exact recovery in the absence of outliers. In addition, we note that \Cref{L1 RIP} can also be applicable to other measurement operators, such as subgaussian measurements \cite{vershynin2018high}, using a similar analysis.

\subsection{Sharpness}
\label{sec: sharpness}

We now study the $\ell_1$ loss function $f(\calX) = \frac{1}{m}\|\calA(\calX)-\vy\|_1$ and establish the sharpness property \cite{burke1993weak,marcotte1998weak} that can ensure exact recovery with corrupted measurements in \eqref{Noisy Definition of tensor sensing L1 loss}.
Let $\calS\subseteq \{1,\dots, m  \}$ denote the support of the outlier vector $\vs$, and $\calS^{c} = \{1,\dots, m   \} \backslash \calS$. We define $p_s = \frac{|\calS|}{m}$ as the fraction of outliers in $\vy$. The following result establishes the sharpness property for $\calA$.
\begin{lemma} (Sharpness of Gaussian measurement operators)
\label{sharpness property lemma}
Given an unknown target tensor $\calX^\star$ with ranks $\vr = (r_1,\dots, r_{N-1})$, suppose the linear map $\calA: \R^{d_{1}\times  \cdots \times d_{N}}\rightarrow \R^m$ is a Gaussian measurement operator. Let $\delta_{2\overline{r}}\in(0,(1-2p_s)\sqrt{2/\pi})$  be a positive constant. If the number of measurements satisfies $m\geq \Omega(N\overline{d}\overline{r}^2\log N/\delta_{2\overline{r}}^2)$, then with probability exceeding $1-2e^{-\Omega(N\overline{d}\overline{r}^2\log N)}$, $\calA$ satisfies the following sharpness property:
\begin{eqnarray}
    \label{L1 RIP property in lemma}
    \frac{1}{m}\|\calA(\calX - \calX^\star) -\vs \|_1 - \frac{1}{m}\|\vs\|_1 \geq ((1-2p_s)\sqrt{2/\pi} - \delta_{2\overline{r}})\|\calX - \calX^\star\|_F
\end{eqnarray}
holds for all low-TT-rank tensors $\calX$ with ranks $\vr$.
\end{lemma}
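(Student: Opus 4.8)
The plan is to reduce the sharpness bound to a uniform lower bound on a \emph{signed} $\ell_1/\ell_2$ functional, and then to establish that bound by combining a pointwise Gaussian tail estimate with a covering argument over low-TT-rank tensors, reusing \Cref{L1 RIP}. Write $\calZ = \calX - \calX^\star$. Since $\calX$ and $\calX^\star$ both have TT ranks at most $\overline{r}$, the difference $\calZ$ has TT ranks at most $2\overline{r}$, which is exactly why the statement involves $\delta_{2\overline{r}}$ and why the relevant instance of \Cref{L1 RIP} is the rank-$2\overline{r}$ one.

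First I would isolate the effect of the outliers. Splitting the $\ell_1$ norm over $\calS$ and $\calS^c$, using $s_k=0$ on $\calS^c$ together with the reverse triangle inequality $|\langle\calA_k,\calZ\rangle - s_k| \geq |s_k| - |\langle\calA_k,\calZ\rangle|$ on $\calS$, yields
\begin{equation*}
\frac{1}{m}\|\calA(\calZ)-\vs\|_1 - \frac{1}{m}\|\vs\|_1 \;\geq\; \frac{1}{m}\sum_{k\in\calS^c}|\langle\calA_k,\calZ\rangle| - \frac{1}{m}\sum_{k\in\calS}|\langle\calA_k,\calZ\rangle| \;=:\; h(\calZ).
\end{equation*}
Hence it suffices to prove $h(\calZ) \geq ((1-2p_s)\sqrt{2/\pi} - \delta_{2\overline{r}})\|\calZ\|_F$ uniformly over all TT tensors $\calZ$ of rank at most $2\overline{r}$; since $h$ and the right-hand side are both positively homogeneous of degree one and the rank-$2\overline{r}$ set is a cone, I may assume $\|\calZ\|_F = 1$.

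Next I would analyze $h$ pointwise. For a fixed unit-norm $\calZ$, the entries $\langle\calA_k,\calZ\rangle$ are i.i.d.\ $\mathcal{N}(0,1)$, so $\E|\langle\calA_k,\calZ\rangle| = \sqrt{2/\pi}$ and, writing $\epsilon_k = +1$ for $k\in\calS^c$ and $\epsilon_k=-1$ for $k\in\calS$, we get $\E\, h(\calZ) = \tfrac{1}{m}\sum_k \epsilon_k \sqrt{2/\pi} = (1-2p_s)\sqrt{2/\pi}$ --- precisely the leading constant in the claim. Since $h(\calZ)$ is an average of the $m$ independent sub-Gaussian variables $\epsilon_k\,(|\langle\calA_k,\calZ\rangle| - \sqrt{2/\pi})$, a Hoeffding/Bernstein tail bound gives $\mathbb{P}\big(h(\calZ) \leq (1-2p_s)\sqrt{2/\pi} - t\big) \leq e^{-\Omega(mt^2)}$ for each fixed $\calZ$. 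Crucially this rate depends on the full number of measurements $m$ rather than on $|\calS|$, which is what keeps the sample complexity free of any $1/p_s$ factor.

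Finally I would make the estimate uniform via an $\epsilon$-net over the manifold of rank-$\leq 2\overline{r}$ TT tensors of unit Frobenius norm, exactly as in the proof of \Cref{L1 RIP}; such a net has cardinality $(C/\epsilon)^{O(N\overline{d}\,\overline{r}^2)}$. Choosing $t \asymp \delta_{2\overline{r}}$ and $m \geq \Omega(N\overline{d}\,\overline{r}^2\log N/\delta_{2\overline{r}}^2)$ makes the union bound over the net succeed with probability $1 - e^{-\Omega(N\overline{d}\,\overline{r}^2\log N)}$. To pass from net points to an arbitrary $\calZ$, I would use that $h$ is Lipschitz, $|h(\calZ_1) - h(\calZ_2)| \leq \tfrac{1}{m}\|\calA(\calZ_1 - \calZ_2)\|_1 \leq (\sqrt{2/\pi} + \delta_{2\overline{r}})\|\calZ_1 - \calZ_2\|_F$ on the event of \Cref{L1 RIP}; intersecting the concentration event with the RIP event of \Cref{L1 RIP} is what produces the factor $1 - 2e^{-\Omega(\cdots)}$ in the statement. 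I expect the main obstacle to be this last discretization step: the perturbation must be controlled uniformly, and care is needed because a difference of two nearby rank-$2\overline{r}$ tensors can have rank up to $4\overline{r}$, so the continuity bound should be invoked at the appropriate higher rank (or the tail and net granularity chosen so that the perturbation term is absorbed into $\delta_{2\overline{r}}$), mirroring the corresponding argument in the proof of \Cref{L1 RIP}.
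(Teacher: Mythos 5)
Your proposal is correct and follows essentially the same route as the paper: the same reduction via the reverse triangle inequality to the signed functional $\frac{1}{m}\|\calA_{\calS^c}(\calZ)\|_1 - \frac{1}{m}\|\calA_{\calS}(\calZ)\|_1$, the same Hoeffding-type pointwise bound with mean $(1-2p_s)\sqrt{2/\pi}$ and rate $e^{-\Omega(m\delta_{2\overline{r}}^2)}$, the same factor-wise $\epsilon$-net over rank-$2\overline{r}$ TT tensors, and the same intersection with the RIP event yielding the $1-2e^{-\Omega(\cdot)}$ probability. Your worry about rank inflation to $4\overline{r}$ in the discretization step does not arise in the paper's treatment, because the net is built factor-by-factor and the difference $\calZ - \calZ^{(p)}$ is telescoped (via \Cref{EXPANSION_A1TOAN-B1TOBN_1}) into $N$ terms, each itself a TT tensor of rank at most $2\overline{r}$, so the continuity bound is invoked only at rank $2\overline{r}$.
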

The proof is given in {Appendix} \ref{Proof of L1 RIP property in appendix}. To simplify the notation, we use $\delta_{2\overline{r}}$, as in \Cref{L1 RIP}, to represent the constant. \Cref{sharpness property lemma} establishes an exact recovery condition for measurements with outliers \eqref{Noisy Definition of tensor sensing L1 loss}, showing that when the outlier ratio $p_s\leq \frac{1}{2} - \frac{\delta_{2\ol r}}{2\sqrt{2/\pi}} \leq \frac{1}{2}$, the sharpness property \eqref{L1 RIP property in lemma} implies exact recovery as the left-hand side is equal to $f(\calX) - f(\calX^\star)$. Additionally, this property indicates that we can tolerate up to $m/2$ outliers in the measurements when $\delta_{2 \ol r}$ is sufficiently small. Denote by $\calA_{\calS}$ and $\calA_{\calS^c}$ as the linear operators in $\{\calA_k: k\in\calS \}$ and $\{\calA_k: k\in\calS^c \}$, respectively. One can also use the same analysis of \cite[Proposition 2]{li2020nonconvex} to obtain a similar sharpness by directly using the $\ell_1/\ell_2$-RIP property: assuming that the measurement operators $\calA$ and $\calA_{\calS^c}$ obey $\ell_1/\ell_2$-RIP as in \Cref{L1 RIP}, then we have i.e., $\frac{1}{m}\|\calA(\calX - \calX^\star) -\vs \|_1 - \frac{1}{m}\|\vs\|_1 \geq (2(1 - p_s)(\sqrt{2/\pi} - \delta_{2\overline{r}}) - (\sqrt{2/\pi} + \delta_{2\overline{r}}))\|\calX - \calX^\star\|_F$ with $p_s\leq \frac{1}{2} - \frac{\delta_{2\overline{r}}}{\sqrt{2/\pi} - \delta_{2\overline{r}}}$. Compared to this result, our result provides a more relaxed condition for $\delta_{2\ol r}$ when $p_s$ is fixed, or for $p_s$ when $\delta_{2\ol r}$ is fixed.

\section{Provably Correct Algorithms for Robust TT Recovery}
\label{Robust tensor recovery}

In this section, we develop gradient-based algorithms to recover $\calX^\star$ from corrupted measurements $\vy = \calA(\calX^\star) + \vs$ as described in \eqref{Noisy Definition of tensor sensing L1 loss} by solving \eqref{Loss Function of robust tensor sensing abstract}. Specifically, we introduce two iterative algorithms. The first algorithm, the projected subgradient method (PSubGM), optimizes the entire tensor in each iteration and employs the TT-SVD to project the iterates back to the TT format. To address the challenge of high-order tensors, which can be exponentially large, we then propose the factorized Riemannian subgradient method (FRSubGM). This method, based on the factorization approach, directly optimizes over the factors, reducing storage memory requirements at the expense of a slightly slower convergence rate compared to PSubGM. Finally, we show that the commonly used truncated spectral initialization provides a valid starting point for both PSubGM and FRSubGM.

\subsection{Projected Subgradient Method}

We commence by reiterating the loss function in \eqref{Loss Function of robust tensor sensing abstract}, which seeks to minimize the disparity between the measurements $\vy$ and the linear map of the estimated low-TT-rank tensor $\calX$ as:
\begin{eqnarray}
    \label{Loss Function of robust tensor sensing main paper}
    \begin{split}
    \min_{\mbox{\tiny$\begin{array}{c}
     \calX\in\R^{d_1\times\cdots\times d_N}\\
     \text{rank}(\calX) = (r_1,\dots, r_{N-1})\end{array}$}} f(\calX) = \frac{1}{m}\|\calA(\calX)-\vy\|_1.
    \end{split}
\end{eqnarray}

We solve \eqref{Loss Function of robust tensor sensing main paper} by a Projected SubGradient Method (PSubGM)  with the following iterative updates:
\begin{eqnarray}
    \label{Iterative equ of GDwithTT_SVD_1}
    \calX^{(t+1)} = \text{SVD}_{\vr}^{tt}( \calX^{(t)} - \mu_t\partial f(\calX^{(t)})),
\end{eqnarray}
where $\mu_t$ is the step size,  $\partial f(\calX^{(t)})  =  \frac{1}{m}\sum_{k=1}^m \text{sign}(\<\calA_k,\calX^{(t)}\> - y_k) \calA_k$ is a subgradient\footnote{The definition of  (Fr$\rm \acute{e}$chet) subdifferential \cite{li2020nonconvex} of $f$ at $\calX$ is
\begin{eqnarray}
    \label{Definition of subgradient of any function main paper}
    \partial f(\calX) = \bigg\{\calD\in\R^{d_1\times \cdots \times d_N}: \liminf_{\calX' \to\calX} \frac{f(\calX') -  f(\calX)  -  \<\calD, \calX'  -\calX  \>  }{\|\calX'  -  \calX\|_F} \geq 0  \bigg\},
\end{eqnarray}
where each $\calD\in \partial f(\calX)$ is called a subgradient of $f$ at $\calX$. In general, a nonsmooth function may have multiple subgraidents at certain points. Here, if there exist multiple subgradients, we pick the one with sign function defined as $\text{sign}(x) = \begin{cases}
-1, &  x < 0 \\
0, &  x = 0 \\
1, & x > 0
\end{cases}$, and with abuse of notation, we use $\partial f(\calX)$ to denote this subgradient.} of $f$, and $\text{SVD}_{\vr}^{tt}( \cdot)$ denotes the TT-SVD operation \cite{Oseledets11} that projects a given tensor to a TT format.
Computing the optimal low-TT-rank approximation, in general, is NP-hard \cite{hillar2013most}.  While the TT-SVD is not a nonexpansive projection, when two tensors are
sufficiently close, it can have an improved guarantee that is independent of $N$, distinguishing it from the result in \cite[Corollary 2.4]{Oseledets11}.
\begin{lemma}(\cite[Lemma 26]{cai2022provable})
\label{Perturbation bound for TT SVD}
Let $\calX^\star$ be in TT format with the ranks $(r_1,\dots, r_{N-1})$. For any $\calE\in\R^{d_1\times \cdots \times d_N}$ with $ C_N ||\calE||_F  \leq \underline{\sigma}({\calX^\star})$ for some constant $C_N\geq 500 N$, we have
\begin{eqnarray}
    \label{Perturbation bound for TT SVD1}
    ||\text{SVD}_{\vr}^{tt}(\calX^\star + \calE)-\calX^\star||_F^2\leq||\calE||_F^2+\frac{600N||\calE||_F^3}{\underline{\sigma}({\calX^\star})}.
\end{eqnarray}
\end{lemma}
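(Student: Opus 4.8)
The plan is to use that $\calX^\star$ is \emph{exactly} of TT rank $\vr$, so that in a neighborhood of $\calX^\star$ the operator $\text{SVD}_{\vr}^{tt}(\cdot)$ behaves like a projection onto the smooth manifold $\mathcal{M}_{\vr}$ of fixed-TT-rank tensors, whose linearization at $\calX^\star$ is the orthogonal projection $\calP_{\mathbb{T}}$ onto the tangent space $\mathbb{T}$ of $\mathcal{M}_{\vr}$ at $\calX^\star$. Writing $\hat\calX := \text{SVD}_{\vr}^{tt}(\calX^\star+\calE)$, I would first establish the first/second-order expansion
\[
\hat\calX = \calX^\star + \calP_{\mathbb{T}}\calE + \calN, \qquad \|\calN\|_F \le \frac{CN\|\calE\|_F^2}{\underline\sigma(\calX^\star)},
\]
for a universal constant $C$, valid under the standing gap hypothesis $C_N\|\calE\|_F\le\underline\sigma(\calX^\star)$ with $C_N\ge 500N$. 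That the first-order term is exactly $\calP_{\mathbb{T}}\calE$ relies on the fact that tangent perturbations are reproduced by the truncation, while purely normal perturbations only rotate the kept singular subspaces at second order.

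Granting this expansion, the lemma follows by expanding the square:
\[
\|\hat\calX-\calX^\star\|_F^2 = \|\calP_{\mathbb{T}}\calE\|_F^2 + 2\langle\calP_{\mathbb{T}}\calE,\calN\rangle + \|\calN\|_F^2 .
\]
Since $\calP_{\mathbb{T}}$ is an orthogonal projection, $\|\calP_{\mathbb{T}}\calE\|_F^2\le\|\calE\|_F^2$, which already produces the leading term with constant $1$. By Cauchy--Schwarz the cross term is at most $2\|\calE\|_F\|\calN\|_F$, and the gap hypothesis forces $\|\calN\|_F\le\|\calE\|_F$, so that $\|\calN\|_F^2\le\|\calE\|_F\|\calN\|_F$. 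Substituting $\|\calN\|_F\le CN\|\calE\|_F^2/\underline\sigma(\calX^\star)$ into both terms bounds the correction by $3CN\|\calE\|_F^3/\underline\sigma(\calX^\star)$; calibrating $C$ against the constants in the expansion yields the stated $600N\|\calE\|_F^3/\underline\sigma(\calX^\star)$.

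To prove the expansion I would unfold the sequential structure of TT-SVD, writing $\hat\calX=\calP_{N-1}\cdots\calP_1\calT$ with $\calT=\calX^\star+\calE$, where $\calP_i$ is the orthogonal projector onto the leading-$r_i$ left-singular subspace of the $i$-th unfolding of the partially processed tensor $\calY_{i-1}=\calP_{i-1}\cdots\calP_1\calT$, and $\calP_i^\star$ its noiseless counterpart, which fixes $\calX^\star$ in that $\calP_i^\star(\calX^\star)^{\<i\>}=(\calX^\star)^{\<i\>}$. The engine is Wedin's $\sin\Theta$ theorem: because $\sigma_{r_i}((\calX^\star)^{\<i\>})\ge\underline\sigma(\calX^\star)$ and the perturbation entering step $i$ has spectral norm $O(\|\calE\|_F)$, the gap hypothesis keeps each step in the linear regime and gives $\|\calP_i-\calP_i^\star\|=O(\|\calE\|_F/\underline\sigma(\calX^\star))$. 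Linearizing each $\calP_i$ about $\calP_i^\star$ and collecting the first-order contributions reproduces $\calP_{\mathbb{T}}\calE$, while the quadratic discrepancies between $\calP_i$ and $\calP_i^\star$ are absorbed into $\calN$.

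The hard part is the bookkeeping across the $N-1$ sweeps while retaining only a single power of $N$. A naive propagation bounds the deviation feeding step $i$ by $\|\calY_{i-1}-\calX^\star\|_F\le\|\calE\|_F+\sum_{j<i}\|\calR_j\|_F=O(N\|\calE\|_F)$, with $\calR_j=(\calI-\calP_j)\calY_{j-1}$; pushed through Wedin and summed over the $N-1$ steps this would inflate $\calN$ to order $N^3\|\calE\|_F^2/\underline\sigma(\calX^\star)$, far too lossy to match the claimed linear dependence. The crux is therefore to show that the per-step residuals accumulate \emph{additively} rather than multiplicatively: one measures each $\calR_i$ directly against $\calX^\star$ via $(\calI-\calP_i)(\calX^\star)^{\<i\>}=(\calI-\calP_i)\calP_i^\star(\calX^\star)^{\<i\>}$ and exploits the orthogonality $\hat\calX\perp\calR_i$ (so that $\langle\hat\calX-\calX^\star,\calR_i\rangle=-\langle\calX^\star,\calR_i\rangle$), so that the $\langle\calX^\star,\calR_i\rangle$ cross terms telescope and retain one factor of $N$. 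Establishing this telescoping, together with the claim that the surviving first-order term equals $\calP_{\mathbb{T}}\calE$, is the technical heart of the argument; the spectral-gap assumption $C_N\|\calE\|_F\le\underline\sigma(\calX^\star)$ is exactly what is needed to make the subspace perturbations small enough for these second-order estimates to close.
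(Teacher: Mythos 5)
The paper itself offers no proof of this lemma---it is imported verbatim from \cite[Lemma 26]{cai2022provable}---so your proposal has to stand on its own, and as written it does not: everything rests on the expansion $\widehat{\calX}=\calX^\star+\calP_{\mathbb{T}}\calE+\calN$ with $\|\calN\|_F\le CN\|\calE\|_F^2/\underline{\sigma}(\calX^\star)$, which is asserted but never established, and which is essentially equivalent in difficulty to the lemma itself. Your sketch of its proof does not close. With the elementary per-step estimates actually available, the signal leakage $\|(\calI-\calP_i){\calX^\star}^{\<i\>}\|_F$ can be bounded either by $2\|\calY_{i-1}-\calX^\star\|_F$ (Eckart--Young quasi-optimality of the step-$i$ truncation plus the triangle inequality), which fed back into the recursion gives $\|\calY_i-\calX^\star\|_F^2\le 5\|\calY_{i-1}-\calX^\star\|_F^2$, i.e.\ growth exponential in $N$ (not the $N^3$ you describe); or by $\|(\calI-\calP_i)\calP_i^\star\|\cdot\|{\calX^\star}^{\<i\>}\|_F$ via Wedin, which is first order in $\|\calE\|_F$ but carries a factor of order $\sqrt{\ol r}\,\ol\sigma(\calX^\star)/\underline{\sigma}(\calX^\star)$ that the lemma does not tolerate. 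Neither route produces a second-order remainder, and you yourself defer the repair as ``the technical heart.''

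There is also a structural mismatch in the proposed repair. The identity $\langle\widehat{\calX}-\calX^\star,\calR_i\rangle=-\langle\calX^\star,\calR_i\rangle$ (which in turn requires justifying $\widehat{\calX}\perp\calR_i$ via the nestedness of the TT-SVD projectors) is an inner-product statement: it can control $\|\widehat{\calX}-\calX^\star\|_F^2$, but it cannot bound the Frobenius norm of $\calN$, so it cannot prove the expansion on which your plan is built---the two halves of the proposal do not connect. Indeed, if you pursue that identity to the end the expansion becomes unnecessary: writing $\widehat{\calX}-\calX^\star=\calE-\sum_i\calR_i$ with $\calR_i=(\calI-\calP_i)\calY_{i-1}$ and using the pairwise orthogonality of the $\calR_i$ yields the exact identity $\|\widehat{\calX}-\calX^\star\|_F^2=\|\calE\|_F^2-\sum_i\|\calR_i\|_F^2+2\sum_i\langle\calX^\star,\calR_i\rangle$, so the whole lemma reduces to the single per-step estimate $\langle\calX^\star,\calR_i\rangle\le O\big(\|\calE\|_F^3/\underline{\sigma}(\calX^\star)\big)$. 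That third-order estimate is the real crux: it needs a three-factor argument---the component of $\calX^\star$ in the discarded left subspace is $O(\|\calE\|_F)$, the discarded singular values are $O(\|\calE\|_F)$ by Weyl, and the right-singular-subspace misalignment is $O(\|\calE\|_F/\underline{\sigma}(\calX^\star))$ by Wedin---combined with an induction maintaining $\|\calY_{i-1}-\calX^\star\|_F\le\sqrt{2}\,\|\calE\|_F$ at every step. Nothing in the proposal states or proves this; the one identity you display, $(\calI-\calP_i){\calX^\star}^{\<i\>}=(\calI-\calP_i)\calP_i^\star{\calX^\star}^{\<i\>}$, delivers only first-order information. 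In short, the proposal is a plausible outline whose essential steps are exactly the unproven claims.
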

\Cref{Perturbation bound for TT SVD} implies that when the initialization of PSubGM is close to $\calX^\star$, the perturbation bound of the TT-SVD is independent of the order $N$ due to $\|\calE\|_F\leq \frac{\underline{\sigma}({\calX^\star})}{500N}$. To facilitate analyzing the local convergence of the PSubGM, we first establish the robust regularity condition which has been widely built in contexts such as low-rank matrix recovery \cite{Tu16}, phase retrieval \cite{candes2015phase} and robust subspace learning \cite{zhu2019linearly}. The result is as follows:
\begin{lemma} (Robust regularity condition of $f$ with respect to the full tensor)
\label{Robust Regularity condition of full tensor}
Let the ground truth tensor $\calX^\star$ be in TT format with ranks $\vr = (r_1,\dots, r_{N-1})$. Assume the linear map $\calA$ is a Gaussian measurement operator where $\{\calA_k\}_{k=1}^m$ have i.i.d. standard Gaussian entries. Then, based on the $\ell_1/\ell_2$-RIP and sharpness property,  $f$ satisfies the robust regularity condition in the sense that
\begin{eqnarray}
    \label{the defitinition of regularity condition for TT L1 loss full tensor}
    \< \calX - \calX^\star, \partial f(\calX)\> \geq ((1-2p_s)\sqrt{2/\pi} - \delta_{2\overline{r}})\|\calX - \calX^\star\|_F,
\end{eqnarray}
for any low-TT-rank tensors $\calX$ with ranks $\vr = (r_1,\dots, r_{N-1})$.
\end{lemma}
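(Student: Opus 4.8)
The plan is to reduce the regularity condition to the already-established sharpness property (\Cref{sharpness property lemma}) through a single convexity argument, so that essentially no new estimates are required. The key observation is that, although $f$ is nonsmooth, it is \emph{convex}: it is the composition of the convex $\ell_1$ norm with the affine map $\calX \mapsto \calA(\calX) - \vy$. Consequently, for the chosen subgradient $\partial f(\calX) = \frac{1}{m}\sum_{k=1}^m \sign(\langle \calA_k, \calX\rangle - y_k)\,\calA_k$ — which is a genuine convex subgradient, since the convention $\sign(0)=0$ selects an admissible element of $[-1,1]$ at the nonsmooth points of $|\cdot|$ — the subgradient inequality gives
\[
\langle \calX - \calX^\star, \partial f(\calX)\rangle \geq f(\calX) - f(\calX^\star).
\]

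First I would make the right-hand side explicit. Using $\vy = \calA(\calX^\star) + \vs$ together with linearity of $\calA$,
\[
f(\calX) = \frac{1}{m}\|\calA(\calX - \calX^\star) - \vs\|_1, \qquad f(\calX^\star) = \frac{1}{m}\|\vs\|_1,
\]
so that $f(\calX) - f(\calX^\star)$ is precisely the left-hand side of the sharpness inequality \eqref{L1 RIP property in lemma}. Since $\calX$ and $\calX^\star$ both have TT ranks $\vr$, their difference $\calX - \calX^\star$ has TT ranks bounded by $2\ol r$, which is exactly the regime in which \Cref{sharpness property lemma} applies (and the reason the constant $\delta_{2\ol r}$ enters). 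Invoking it yields $f(\calX) - f(\calX^\star) \geq ((1-2p_s)\sqrt{2/\pi} - \delta_{2\ol r})\|\calX - \calX^\star\|_F$, and chaining this with the convexity bound gives the claimed regularity condition.

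As an alternative to quoting convexity as a black box, I could verify the same chain directly at the coordinate level: writing $\vz = \calA(\calX - \calX^\star) - \vs$ and using $\calA(\calX-\calX^\star) = \vz + \vs$,
\[
\langle \calX - \calX^\star, m\,\partial f(\calX)\rangle = \langle \sign(\vz), \vz + \vs\rangle = \|\vz\|_1 + \langle \sign(\vz), \vs\rangle \geq \|\vz\|_1 - \|\vs\|_1,
\]
where the last step uses $\|\sign(\vz)\|_\infty \leq 1$; this reproduces $m\,(f(\calX) - f(\calX^\star))$ without appealing to abstract subdifferential calculus. Either route makes clear that the only nontrivial ingredient is the sharpness property itself, so the regularity condition introduces no new probabilistic or geometric difficulty. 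The one point warranting care — the main obstacle, such as it is — is confirming that the specific subgradient fixed by the $\sign$ convention (rather than an arbitrary element of $\partial f(\calX)$) is admissible in the convexity inequality; this holds because it lies in the convex subdifferential of the $\ell_1$ norm pulled back through $\calA$, which for a convex function coincides with the Fréchet subdifferential defined in \eqref{Definition of subgradient of any function main paper}.
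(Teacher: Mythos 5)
Your proposal is correct and is essentially the paper's own proof: the coordinate-level chain you give as an "alternative" (expanding $\langle \calX-\calX^\star, \partial f(\calX)\rangle$ into $\|\vz\|_1 + \langle \sign(\vz),\vs\rangle \geq \|\vz\|_1 - \|\vs\|_1$ and then invoking \Cref{sharpness property lemma}) is exactly the two-step argument in the paper, and your convexity framing is just a cleaner packaging of that same computation.
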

The proof is given in Appendix \ref{Proof of the Robust regularity condition for full tensor}. This result essentially ensures that at any feasible point $\calX$, the associated negative search direction $\partial f(\calX)$ maintains a positive correlation with the error $-(\calX - \calX^\star)$. This enables subgradient method with an appropriate step size will consistently move the current point closer to the global solution in each update.

In contrast to gradient descent, subgradient method with a constant step size may fail to converge to a critical point of a nonsmooth function, such as the $\ell_1$ loss, even if the function is convex \cite{nedic2001convergence, bertsekas2012incremental, shor2012minimization}. Therefore, to ensure convergence of PSubGM, it is generally necessary to use a diminishing step size \cite{goffin1977convergence, shor2012minimization}.
Based on \Cref{Robust Regularity condition of full tensor}, we analyze the local convergence of the PSubGM with a diminishing step size.
\begin{theorem} (Local linear convergence of PSubGM)
\label{Local Convergence of PGD in the sensing_Theorem}
Let $\calX^\star$ be in TT format with ranks $\vr = (r_1,\dots, r_{N-1})$.
Assume that $\calA$ obeys the $\ell_1/\ell_2$-RIP and sharpness with a constant $\delta_{2\overline{r}}< \frac{1-2p_s-\sqrt{4c/(3+3c)}}{1+\sqrt{4c/(3+3c)}}\sqrt{2/\pi}$ for a positive constant $c<\frac{3(1-2p_s)^2}{1+12p_s - 12p_s^2}$.
Suppose that the PSubGM in \eqref{Iterative equ of GDwithTT_SVD_1} is initialized with $\calX^{(0)}$ satisfying
\begin{eqnarray}
    \label{Local Convergence of PGD in the sensing_Theorem initialization}
    \|\calX^{(0)} - \calX^\star\|_F\leq \frac{c\underline{\sigma}({\calX^\star})}{600N},
\end{eqnarray}
and uses the step size $\mu_t=\lambda q^t$ in \eqref{Iterative equ of GDwithTT_SVD_1}, where $\lambda = \frac{ (1-2p_s)\sqrt{2/\pi} - \delta_{2\overline{r}}}{2(\sqrt{2/\pi} + \delta_{2\overline{r}})^2}\|\calX^{(0)} - \calX^\star\|_F$ and $q = \sqrt{(1+c)(1-\frac{3((1-2p_s)\sqrt{2/\pi} - \delta_{2\overline{r}})^2}{4(\sqrt{2/\pi} + \delta_{2\overline{r}})^2})}$.
Then, the iterates $\{ \calX^{(t)} \}_{t\geq 0}$ generated by the PSubGM will converge linearly to $\calX^\star$:
\begin{eqnarray}
    \label{Local Convergence of PGD in the sensing_Theorem_1}
    \| \calX^{(t)}   - \calX^\star \|_F^2 \leq q^{2t} \|\calX^{(0)} - \calX^\star\|_F^2 .
\end{eqnarray}
\end{theorem}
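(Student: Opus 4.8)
The plan is to argue by induction on $t$ that $e_t := \|\calX^{(t)} - \calX^\star\|_F \le q^t e_0$, which is exactly \eqref{Local Convergence of PGD in the sensing_Theorem_1}; the base case $t=0$ is trivial. Throughout I would write $\calD^{(t)} := \calX^{(t)} - \calX^\star$ and abbreviate the two governing constants as $a := (1-2p_s)\sqrt{2/\pi} - \delta_{2\overline{r}}$ and $b := \sqrt{2/\pi}+\delta_{2\overline{r}}$, so that $\lambda = \frac{a}{2b^2}e_0$ and $q^2 = (1+c)\big(1 - \frac{3a^2}{4b^2}\big)$. I would first record two elementary facts that drive everything: $a<b$ (since $1-2p_s\le 1$ and $\delta_{2\overline{r}}>0$), and the hypotheses on $c$ and $\delta_{2\overline{r}}$ are algebraically equivalent to $a/b > \sqrt{4c/(3+3c)}$, i.e.\ to $q<1$. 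Establishing this equivalence up front is what certifies that the stated rate is a genuine contraction.

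The technical heart is a one-step inequality of the form
\begin{equation*}
\|\calD^{(t+1)}\|_F^2 \le (1+c)\big(\|\calD^{(t)}\|_F^2 - 2\mu_t a\|\calD^{(t)}\|_F + \mu_t^2 b^2\big).
\end{equation*}
To get it I would set $\calW^{(t)} = \calX^{(t)} - \mu_t\partial f(\calX^{(t)})$ and $\calE^{(t)} = \calW^{(t)} - \calX^\star$, and invoke the refined TT-SVD perturbation bound \Cref{Perturbation bound for TT SVD}: because the inductive hypothesis keeps the iterate inside the basin $e_t\le e_0\le \frac{c\underline{\sigma}(\calX^\star)}{600N}$, the cubic remainder is dominated by $c$ times the quadratic term, which converts the near-expansiveness of the TT-SVD into the benign factor $(1+c)$. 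For the bracketed quadratic I would exploit that $\calD^{(t)}$ and $\calD^{(t+1)}$ are both low-TT-rank (ranks $\le 2\overline{r}$): the linear term is bounded below by the robust regularity condition $\langle\calD^{(t)},\partial f(\calX^{(t)})\rangle \ge a\|\calD^{(t)}\|_F$ from \Cref{Robust Regularity condition of full tensor}, while the subgradient contribution is bounded above via the $\ell_1/\ell_2$-RIP, which yields $\langle\partial f(\calX^{(t)}),\calZ\rangle \le \frac1m\|\calA(\calZ)\|_1 \le b\|\calZ\|_F$ for every rank-$\le 2\overline{r}$ tensor $\calZ$.

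With the one-step bound in hand the remainder is elementary bookkeeping. Substituting the geometric step size $\mu_t = \lambda q^t = \frac{a}{2b^2}e_0 q^t$ turns the bracket into a parabola in the unknown $e_t\in[0,q^te_0]$ whose vertex sits at $e_t = \frac{a^2}{2b^2}e_0 q^t$, interior to the interval precisely because $a<b$; hence its maximum over the interval is attained at the right endpoint $e_t = q^t e_0$ and equals $\big(1-\frac{3a^2}{4b^2}\big)q^{2t}e_0^2$. Multiplying by $(1+c)$ gives $\|\calD^{(t+1)}\|_F^2 \le (1+c)\big(1-\frac{3a^2}{4b^2}\big)q^{2t}e_0^2 = q^{2(t+1)}e_0^2$, which closes the induction; since $q<1$ the new iterate again obeys $e_{t+1}\le e_0\le \frac{c\underline{\sigma}(\calX^\star)}{600N}$, so the basin is preserved and \Cref{Perturbation bound for TT SVD} remains applicable at the next step.

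I expect the one-step inequality, and within it the control of the subgradient term, to be the main obstacle. The raw subgradient $\partial f(\calX^{(t)}) = \frac1m\sum_k \sign(\cdot)\calA_k$ carries a large component along high-TT-rank directions---its Frobenius norm can be exponentially large in $N$---so one cannot replace $\mu_t^2\|\partial f(\calX^{(t)})\|_F^2$ by $\mu_t^2 b^2$ naively. The point is that the analysis never needs the full subgradient: after the TT-SVD projection the residual $\calD^{(t+1)}$ is low-rank, so $\partial f$ enters only through inner products against low-TT-rank tensors, where the $\ell_1/\ell_2$-RIP upper bound applies and delivers the factor $b$. Making this low-rank restriction precise while simultaneously absorbing the expansiveness of the merely quasi-optimal TT-SVD through \Cref{Perturbation bound for TT SVD}, and verifying that the neighborhood requirement $C_N\|\calE^{(t)}\|_F\le\underline{\sigma}(\calX^\star)$ holds at every iterate, is the delicate part; the subsequent parabola and step-size estimates are routine.
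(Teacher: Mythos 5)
Your skeleton is exactly the paper's proof (Appendices~\ref{Proof of local convergence of PGD in appendix} and \ref{Proof of the induction in the full tensor}): apply \Cref{Perturbation bound for TT SVD} under the basin hypothesis to pick up the $(1+c)$ factor, lower-bound the cross term by the robust regularity condition of \Cref{Robust Regularity condition of full tensor}, substitute the geometric step size $\mu_t=\lambda q^t$, and close the induction by noting that the resulting convex quadratic attains its maximum at an endpoint of the interval---your ``vertex interior, maximum at the right endpoint'' computation is precisely the paper's $x=0$ versus $x=1$ check with $p=1-3c_1^2/(16c_2)$ and the identical choices of $\lambda$ and $q$. Your preliminary algebra (that the hypotheses on $c$ and $\delta_{2\ol r}$ are equivalent to $q<1$, and that $a<b$) is also correct.

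The genuine gap is the one step you flag but never resolve: the control of the subgradient contribution. The paper disposes of it by \Cref{Upper bound of subgradient of f}, which asserts $\|\calD\|_F\le \sqrt{2/\pi}+\delta_{2\ol r}$ for every subgradient and is invoked to bound $\mu_t^2\|\partial f(\calX^{(t)})\|_F^2$. You decline to use this bound, for a sound reason: the actual subgradient $\frac{1}{m}\sum_k \pm\calA_k$ has Frobenius norm of order $\sqrt{d_1\cdots d_N/m}$, and the paper's proof of that lemma applies the $\ell_1/\ell_2$-RIP to the direction $\calX'-\calX=t\calD$, which is not low-TT-rank, so the RIP does not cover it. But your substitute---``$\partial f$ enters only through inner products against low-TT-rank tensors''---is incompatible with the argument you actually run. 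Once you form $\calE^{(t)}=\calD^{(t)}-\mu_t\partial f(\calX^{(t)})$ and invoke \Cref{Perturbation bound for TT SVD}, both its applicability condition $C_N\|\calE^{(t)}\|_F\le\underline{\sigma}(\calX^\star)$ and its conclusion are statements about the \emph{full} Frobenius norm of the unprojected perturbation; there $\mu_t^2\|\partial f(\calX^{(t)})\|_F^2$ appears as a genuine squared norm, and the low-rankness of $\calD^{(t+1)}$ cannot be fed back into it. (The alternative device that would yield only low-rank pairings---comparing $\|\calX^{(t+1)}-\calW^{(t)}\|_F$ with $\|\calX^\star-\calW^{(t)}\|_F$ via best-approximation optimality---is unavailable because TT-SVD is only $\sqrt{N-1}$-quasi-optimal.) So your one-step inequality is never derived, and you concede as much by calling this ``the delicate part.'' To finish along your outline you must either accept \Cref{Upper bound of subgradient of f} as stated---in which case your proof coincides with the paper's---or supply a new TT-SVD perturbation bound valid for perturbations that are small only in the restricted sense; neither your proposal nor, it should be said, the paper's own Lemma provides such an argument.
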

This proof is provided in {Appendix} \ref{Proof of local convergence of PGD in appendix}. Note that the required initialization \eqref{Local Convergence of PGD in the sensing_Theorem initialization} and the term $1+c$ in \eqref{Local Convergence of PGD in the sensing_Theorem_1} are introduced because of the sub-optimality of the TT-SVD operation. While we present the result using a single choice of $\lambda$ and $q$ for simplicity, a wider range of values can be slightly modified to the arguments, without compromising linear convergence. In practice, these parameters should be carefully tuned to ensure convergence. In order to ensure that the upper bound of recovery error in \eqref{Local Convergence of PGD in the sensing_Theorem_1} is monotonically decreasing, we can choose $\delta_{2\overline{r}}<\frac{1-2p_s-\sqrt{4c/(3+3c)}}{1+\sqrt{4c/(3+3c)}}\sqrt{2/\pi}\leq (1-2p_s)\sqrt{2/\pi}$. This can be guaranteed by choosing sufficiently large $m$ according to \Cref{L1 RIP}.
Additionally, it should be noted that the linear convergence rate of the PSubGM improves when either the outlier ratio $p_s$ decreases or the number of measurements $m$ increases.
On the other hand, unlike the inexact recovery associated with the smooth least-squares ($\ell_2$) loss \cite{Rauhut17} in the presence of outliers, it is important to emphasize that the $\ell_1$ loss enables the precise recovery of $\calX^\star$. This means that under the $\ell_1$ loss, it is possible to recover the true underlying tensor $\calX^\star$ from measurements with outliers. Nonetheless, in the presence of dense noise, the recovery of the ground truth tensor $\calX^\star$ becomes infeasible, regardless of whether the $\ell_1$ or $\ell_2$ loss function is employed.

\subsection{Factorized Riemannian Subgradient Method}

One drawback of PSubGM is that it requires storing the entire tensor ($O(\ol d^N)$ size) in each iteration. To reduce the space complexity, an alternative approach is to directly estimate tensor factors $\{\mX_i \}$, which have a complexity of $O(N\ol d \ol r^2)$, by solving
\begin{eqnarray}
    \label{Loss Function of robust tensor sensing tensor factor main paper}
    \begin{split}
    \min_{\mbox{\tiny$\begin{array}{c}
     \mX_i\in\R^{r_{i-1}\times d_i\times r_i},\\
     i\in[N]\end{array}$}} F(\mX_1,\dots, \mX_N) & = \frac{1}{m}\|\calA([\mX_1,\dots,\mX_N])-\vy\|_1,\\
     &\st \ \sum_{s_i=1}^{d_i} \mX_i^\top(:,s_i,:)\mX_i(:,s_i,:) =\mId_{r_i},  \ \ i\in[N-1].
    \end{split}
\end{eqnarray}
The additional constraints $\sum_{s_i=1}^{d_i} \mX_i^\top(:,s_i,:)\mX_i(:,s_i,:) =\mId_{r_i}$ are introduced to reduce the scaling ambiguity of the factors \cite{qin2024guaranteed}, i.e., recovering the left-orthogonal form.  Noticing that constraints define a Stiefel manifold structure, we apply a Riemannian Subgradient method on the Stiefel manifold \cite{absil2008optimization} to optimize it. We call the resulting algorithm FRSubGM, short for factorized Riemannian Subgradient Method, to emphasize the factorization approach.
Specifically, recalling the left unfolding  $L(\mX_i)$ of factors in \eqref{left unfolding of tensor factor of Tensor Train}, FRSubGM involves iterative updates
\begin{eqnarray}
    \label{RIEMANNIAN_GRADIENT_DESCENT_1_1 L1 loss}
    &&\hspace{-0.8cm} L(\mX_i^{(t+1)})=\text{Retr}_{L(\mX_i)}\bigg(L(\mX_i^{(t)})-\frac{\mu_t}{\ol\sigma^2(\calX^\star)}\calP_{\text{T}_{L({\mX_i})} \text{St}}\big(\partial_{L(\mX_{i})}F(\mX_1^{(t)}, \dots, \mX_N^{(t)})\big) \bigg), i\in[N-1],\\
    \label{RIEMANNIAN_GRADIENT_DESCENT_1_2 L1 loss}
    &&\hspace{-0.8cm} L(\mX_N^{(t+1)})=L(\mX_N^{(t)})-\mu_t\partial_{L(\mX_N)}F(\mX_1^{(t)}, \dots, \mX_N^{(t)}),
\end{eqnarray}
where $\calP_{\text{T}_{L({\mX_i})}}(\mU) = \mU - \frac{1}{2}L({\mX_i})(\mU^\top L({\mX_i}) + (L({\mX_i}))^\top \mU)$ denotes the projection onto the tangent space of the Stifel manifold at the point $L({\mX_i})$ and the polar decomposition-based retraction is $\text{Retr}_{L(\mX_i)}( \mG) = \mG(\mG^\top \mG)^{-\frac{1}{2}}$.
Moreover, $\mu_t$ is a diminishing step size. Note that we use discrepant step sizes between $\{L(\mX_i)\}$ and $L(\mX_N)$, i.e., $\mu_t/\ol\sigma^2(\calX^\star)$ for $\{L(\mX_i)\}$ and $\mu_t$ for $L(\mX_N)$. This is because $\|L_{\mR}({\mX}_i^\star)\|^2 = 1, i\in[N-1]$ and $\|R({\mX}_N^\star)\|^2 = \sigma_1^2({\calX^\star}^{\< N-1 \>})\leq \ol{\sigma}^2(\calX^\star)$, where $R(\mX_N^\star)=\begin{bmatrix}\mX_N^\star(1) &  \cdots &  \mX_N^\star(d_N) \end{bmatrix}\in\R^{r_{N-1}\times d_N}$, are satisfied in each iteration. For simplicity, we use $\ol{\sigma}^2(\calX^\star)$ to unify the step size. However, in practical implementation, we have the flexibility to fine-tune the two step sizes.

Before analyzing the FRSubGM algorithm, we will establish an error metric to quantify the distinctions between factors in two left-orthogonal form tensors, namely   $\calX = [{\mX}_1,\dots,{\mX}_{N} ]$ and  $\calX^\star = [{\mX}_1^\star,\dots,{\mX}_{N}^\star ]$.
Note that the left-orthogonal form still has rotation ambiguity among the factors in the sense that $\Pi_{i=1}^{N}\mX_i^\star(s_i) = \Pi_{i=1}^{N}\mR_{i-1}^\top\mX_i^\star(s_i)\mR_i$ for any orthonormal matrix $\mR_i\in\O^{r_i\times r_i}$ (with $\mR_0 = \mR_{N} = 1$). To capture this rotation ambiguity, by defining the rotated factors $L_{\mR}(\mX_i^\star)$ as
\begin{eqnarray}
\label{A modified left unfolding}
    L_{\mR}(\mX_i^\star) = \begin{bmatrix}\mR_{i-1}^\top\mX_{i}^\star(1)\mR_i\\ \vdots \\ \mR_{i-1}^\top\mX_{i}^\star(d_i)\mR_i\end{bmatrix}, \ \forall \mR_{i-1}\  \text{and} \  \mR_i\in\O^{r_i\times r_i},
\end{eqnarray}
we then apply the distance  between the two sets of factors as \cite{qin2024guaranteed}
\begin{eqnarray}
\label{BALANCED NEW DISTANCE BETWEEN TWO TENSORS}
    \text{dist}^2(\{\mX_i\},\{ \mX_i^\star\})=\min_{\mR_i\in\O^{r_i\times r_i}, \atop i \in [N-1]}\sum_{i=1}^{N-1} \ol{\sigma}^2(\calX^\star)\|L({\mX}_i)-L_{\mR}({\mX}_i^\star)\|_F^2 + \|L({\mX}_N)-L_{\mR}({\mX}_N^\star)\|_2^2,
\end{eqnarray}
where we note that $L({\mX}_N), L_{\mR}({\mX}_N^\star)\in\R^{(r_{N-1}d_N)\times 1}$ are vectors. Subsequently, we establish a connection between $\text{dist}^2(\{\mX_i\},\{ \mX_i^\star\})$ and $\|\calX-\calX^\star\|_F^2$, implying the convergence behavior of $\|\calX-\calX^\star\|_F^2$ as $\{\mX_i\}$ approaches global minima.
\begin{lemma}(\cite[Lemma 1]{qin2024guaranteed})
\label{LOWER BOUND OF TWO DISTANCES}
For any two TT format tensors $\calX$ and $\calX^\star $ with ranks $\vr = (r_1,\dots, r_{N-1})$ and $\ol{\sigma}^2(\calX)\leq \frac{9\ol{\sigma}^2(\calX^\star)}{4}$, let $\{\mX_i\}$ and $\{\mX_i^\star\}$ be the corresponding left-orthogonal form factors. Then $\|\calX-\calX^\star\|_F^2$ and $\dist^2(\{\mX_i \},\{ \mX_i^\star \})$ defined in \eqref{BALANCED NEW DISTANCE BETWEEN TWO TENSORS} satisfy
\begin{eqnarray}
    \label{LOWER BOUND OF TWO DISTANCES_1 main paper}
    &&\|\calX-\calX^\star\|_F^2\geq\frac{1}{8(N+1+\sum_{i=2}^{N-1}r_i)\kappa^2(\calX^\star)}\dist^2(\{\mX_i \},\{ \mX_i^\star \}),\\
    \label{UPPER BOUND OF TWO DISTANCES_1 main paper}
    &&\|\calX-\calX^\star\|_F^2\leq\frac{9N}{4}\dist^2(\{\mX_i \},\{ \mX_i^\star \}).
\end{eqnarray}
\end{lemma}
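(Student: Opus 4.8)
The plan is to exploit the multilinear product structure of the TT format through a telescoping decomposition of $\calX - \calX^\star$, treating the two inequalities separately since they are of opposite character: the upper bound \eqref{UPPER BOUND OF TWO DISTANCES_1 main paper} controls a product by its factors and is the routine direction, whereas the lower bound \eqref{LOWER BOUND OF TWO DISTANCES_1 main paper} must recover factor-level information from the aggregate tensor and is the delicate one. For both directions I fix rotations $\{\mR_i\}$ and, abusing notation, let $\mX_i^\star(s_i)$ denote the $\mR$-rotated slices $\mR_{i-1}^\top\mX_i^\star(s_i)\mR_i$ of \eqref{A modified left unfolding}, which recompose to the same tensor $\calX^\star$; the elementwise telescoping identity then reads
\[
\prod_{i=1}^N\mX_i(s_i) - \prod_{i=1}^N\mX_i^\star(s_i) = \sum_{j=1}^N \Big(\prod_{i<j}\mX_i^\star(s_i)\Big)\big(\mX_j(s_j) - \mX_j^\star(s_j)\big)\Big(\prod_{i>j}\mX_i(s_i)\Big),
\]
expressing the difference as a sum of $N$ terms, each isolating a single factor.

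For the upper bound, I would apply the triangle inequality to the telescoping sum and bound each term in Frobenius norm. The accumulated left product $\prod_{i<j}\mX_i^\star$ is column-orthonormal in the left-orthogonal form and does not inflate the norm, while the accumulated right product $\prod_{i>j}\mX_i$ has spectral norm $\sigma_1(\calX^{\langle j\rangle})\leq\ol\sigma(\calX)\leq\tfrac32\ol\sigma(\calX^\star)$, where the last step uses the hypothesis $\ol\sigma^2(\calX)\leq\tfrac94\ol\sigma^2(\calX^\star)$. Hence the $j$-th term with $j<N$ is controlled by $\tfrac32\ol\sigma(\calX^\star)\|L(\mX_j)-L_{\mR}(\mX_j^\star)\|_F$, whose square carries exactly the weight $\tfrac94\ol\sigma^2(\calX^\star)$ appearing in \eqref{BALANCED NEW DISTANCE BETWEEN TWO TENSORS}, whereas the $j=N$ term has an empty right product and thus weight $1$, matching the last summand of the distance. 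Squaring, applying Cauchy--Schwarz across the $N$ terms (which contributes the factor $N$), and then specializing to the rotations attaining the minimum in \eqref{BALANCED NEW DISTANCE BETWEEN TWO TENSORS} yields \eqref{UPPER BOUND OF TWO DISTANCES_1 main paper}.

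For the lower bound, the strategy is to construct the aligning rotations $\{\mR_i\}$ and bound each aligned factor error by the tensor difference through subspace perturbation. Reshaping preserves the Frobenius norm, so $\|\calX-\calX^\star\|_F=\|\calX^{\langle i\rangle}-\calX^{\star\langle i\rangle}\|_F$ for every $i$; in the left-orthogonal form $\calX^{\langle i\rangle}=\mX^{\leq i}\mX^{\geq i+1}$, where $\mX^{\leq i}$ has orthonormal columns spanning the top-$r_i$ left singular subspace of $\calX^{\langle i\rangle}$, whose relevant singular value is at least $\underline\sigma(\calX^\star)$. A Wedin-type $\sin\Theta$ bound then shows that the column space of $\mX^{\leq i}$ differs from that of $\mX^{\star\leq i}$ by at most $O(\|\calX-\calX^\star\|_F/\underline\sigma(\calX^\star))$; choosing $\mR_i$ as the orthogonal matrix aligning these two subspaces, the aligned factor error $\|L(\mX_i)-L_{\mR}(\mX_i^\star)\|_F$ inherits this bound. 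Weighting by $\ol\sigma^2(\calX^\star)$ as in \eqref{BALANCED NEW DISTANCE BETWEEN TWO TENSORS} produces the factor $\ol\sigma^2(\calX^\star)/\underline\sigma^2(\calX^\star)=\kappa^2(\calX^\star)$, summing over $i$ and accounting for the inner dimensions $r_i$ contributes the term $(N+1+\sum_{i=2}^{N-1}r_i)$, and collecting the numerical constants gives \eqref{LOWER BOUND OF TWO DISTANCES_1 main paper}.

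The hard part is the lower bound, and within it the construction of a consistent set of rotations: the matrix $\mR_i$ that aligns factor $\mX_i$ simultaneously acts on the left index of $\mX_{i+1}$, so the alignments at neighboring levels are coupled and cannot be chosen independently. I would fix the rotations sequentially from one end of the chain, at each step absorbing the previously chosen rotation into the definition of the next left singular subspace to be aligned, and then track how the subspace perturbation at level $i$ feeds into the factor error at level $i+1$. The most technical point is verifying that this propagation accumulates no extra powers of the condition number beyond the single $\kappa^2(\calX^\star)$ factor; here the hypothesis $\ol\sigma^2(\calX)\leq\tfrac94\ol\sigma^2(\calX^\star)$ is used to keep the perturbed singular values comparable to the ground-truth ones throughout the chain, so that every application of the $\sin\Theta$ bound sees a spectral gap of order $\underline\sigma(\calX^\star)$.
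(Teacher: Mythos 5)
First, a point of reference: this lemma is not proved in the paper at all --- it is imported verbatim as \cite[Lemma 1]{qin2024guaranteed} --- so your proposal can only be judged on its own merits and against the proof structure that the lemma's constants reveal. Your upper-bound argument is correct and is the standard one: the telescoping identity of \Cref{EXPANSION_A1TOAN-B1TOBN_1}, orthonormality of the (rotated) left products, the bound $\|\mX^{\geq j+1}\| = \sigma_1(\calX^{\langle j\rangle}) \le \tfrac{3}{2}\ol{\sigma}(\calX^\star)$ for the right products (the only place the hypothesis enters), and Cauchy--Schwarz over the $N$ terms, instantiated at the minimizing rotations, give exactly $\tfrac{9N}{4}\dist^2$.

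The lower bound, however, has a genuine gap, and it sits exactly where you place the difficulty. You assert that after aligning the column spaces of $\mX^{\leq i}$ and $\mX^{\star\leq i}$ via a $\sin\Theta$ bound, ``the aligned factor error $\|L(\mX_i)-L_{\mR}(\mX_i^\star)\|_F$ inherits this bound,'' i.e.\ is $O(\|\calX-\calX^\star\|_F/\underline{\sigma}(\calX^\star))$ with an absolute constant. This claim is both unproven and inconsistent with the statement you are trying to prove: if it held uniformly, then weighting by $\ol{\sigma}^2(\calX^\star)$ and summing over $i$ would yield $\dist^2 \le O(N\kappa^2(\calX^\star))\,\|\calX-\calX^\star\|_F^2$, which is strictly stronger than \eqref{LOWER BOUND OF TWO DISTANCES_1 main paper}; the factor $\sum_{i=2}^{N-1}r_i$ in the lemma's denominator tells you that the per-factor bound achievable by this route must degrade with $r_i$. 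The degradation arises from the coupling you identify but never resolve. Writing $\mX^{\leq i}=(\mId_{d_i}\otimes\mX^{\leq i-1})L(\mX_i)$ (up to a row permutation) and using orthonormality of $\mX^{\leq i-1}$, the correct one-step propagation inequality reads
\begin{eqnarray*}
\|L(\mX_i)-L_{\mR}(\mX_i^\star)\|_F \;\le\; \|\mX^{\leq i}-\mX^{\star\leq i}\mR_i\|_F \;+\; \Big(\sum_{s_i}\|\mX_i^\star(s_i)\|^2\Big)^{1/2}\,\|\mX^{\leq i-1}-\mX^{\star\leq i-1}\mR_{i-1}\|_F,
\end{eqnarray*}
where the propagation coefficient is bounded only by $\|L(\mX_i^\star)\|_F=\sqrt{r_i}$, not by an absolute constant; and for the last factor the analogous estimate has coefficient $\|\calX^{\star\langle N-1\rangle}\|=\ol{\sigma}(\calX^\star)$, which is why even the weight-one term of $\dist^2$ costs $\kappa^2(\calX^\star)$. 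Deriving and assembling these propagation inequalities is the actual content of the lemma; your sketch names the right ingredients (subspace perturbation with gap $\underline{\sigma}(\calX^\star)$, sequential choice of $\mR_i$, ``accounting for inner dimensions'') but defers precisely this step, so the constant in \eqref{LOWER BOUND OF TWO DISTANCES_1 main paper} cannot be reached from what is written.
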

\Cref{LOWER BOUND OF TWO DISTANCES} ensures that $\calX$ is close to $\calX^\star$ once the corresponding factors are close with respect to the proposed distance measure, and the convergence behavior of $\|\calX-\calX^\star\|_F^2$ is reflected by the convergence in terms of the factors. Next, we first provide the robust regularity condition of $F(\mX_1,\dots, \mX_N)$.

\begin{lemma} (Robust regularity condition of $F$ with respect to tensor factors)
\label{Robust Regularity condition of factor tensor}
Let the ground truth tensor $\calX^\star$ be in TT format with ranks $\vr = (r_1,\dots, r_{N-1})$. Assume the linear map $\calA$ obeys the $\ell_1/\ell_2$-RIP and sharpness with a constant $\delta_{(N+1)\overline{r}}\leq(1-2p_s)\sqrt{2/\pi}$.
Define the set $\calC(b)$ as
\begin{eqnarray}
    \label{the defitinition of the intial set of L1 loss in TT tensor factors}
    \calC(b) : =\bigg\{\calX: \|\calX - \calX^\star\|_F^2\leq b \bigg\},
\end{eqnarray}
where $b = \frac{\underline{\sigma}^4(\calX^\star) ((1-2p_s)\sqrt{2/\pi} -  \delta_{(N+1)\overline{r}})^2 }{144(2N^2 - 2N +1)(N+1+\sum_{i=2}^{N-1}r_i)^2(\sqrt{2/\pi} + \delta_{(N+1)\overline{r}})^2\ol{\sigma}^2(\calX^\star)}$. Then for any TT format $\calX \in\calC(b)$, $F$ satisfies the robust regularity condition:
\begin{eqnarray}
    \label{the defitinition of regularity condition for TT L1 loss  tensor factors}
    &\!\!\!\!\!\!\!\!&\sum_{i=1}^{N} \bigg\< L(\mX_i)-L_{\mR}(\mX_i^\star),\calP_{\text{T}_{L(\mX_i)} \text{St}}\bigg(\partial_{L(\mX_{i})}F(\mX_1, \dots, \mX_N)\bigg)\bigg\> \nonumber\\
    &\!\!\!\!\geq\!\!\!\!& \frac{(1-2p_s)\sqrt{2/\pi} - \delta_{(N+1)\overline{r}}}{4\sqrt{2(N+1+\sum_{i=2}^{N-1}r_i)}\kappa(\calX^\star)}\text{dist}^2(\{\mX_i\},\{ \mX_i^\star\}).
\end{eqnarray}
To simplify the expression, we define the identity operator $\calP_{\text{T}_{L({\mX}_N)} \text{St}}=\calI$ such that $\calP_{\text{T}_{L({\mX}_N)} \text{St}}(\nabla_{L({\mX}_{N})}F(\mX_1, \dots, \mX_N)) = \nabla_{L({\mX}_{N})}F(\mX_1, \dots, \mX_N)$.
\end{lemma}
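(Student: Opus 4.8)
The plan is to lift the full-tensor robust regularity condition of \Cref{Robust Regularity condition of full tensor} up to the factor space through a subgradient chain rule, and then to control the gap between the factor-space inner products and the full-tensor inner product using the multilinearity of the TT map together with the closeness afforded by $\calX\in\calC(b)$. First I would invoke the chain rule for the composition $F(\mX_1,\dots,\mX_N)=f([\mX_1,\dots,\mX_N])$: because the factor-to-tensor map is smooth (multilinear) while $f$ is convex, the selected factor subgradient equals the adjoint Jacobian applied to the selected $\partial f(\calX)$, so that, writing $\Delta_i:=L(\mX_i)-L_{\mR}(\mX_i^\star)$,
\[
\bigl\langle \Delta_i,\ \partial_{L(\mX_i)}F\bigr\rangle=\bigl\langle \calD_i,\ \partial f(\calX)\bigr\rangle,
\]
where $\calD_i$ is the tensor obtained by replacing the $i$-th factor of $\calX$ by the order-$3$ factor $\Delta\mX_i$ whose left unfolding is $\Delta_i$, keeping the remaining current factors.

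Next I would dispose of the tangent-space projection, denoted $\calP_{\text{T}}$ for brevity. Since $\calP_{\text{T}}$ is a self-adjoint orthogonal projection, I move it onto $\Delta_i$ and write $\calP_{\text{T}}(\Delta_i)=\Delta_i-\calN_i$. Because both $L(\mX_i)$ and $L_{\mR}(\mX_i^\star)$ lie on the Stiefel manifold, the identity $L(\mX_i)^\top\Delta_i+\Delta_i^\top L(\mX_i)=\Delta_i^\top\Delta_i$ holds, so $\calN_i=\frac{1}{2}L(\mX_i)\Delta_i^\top\Delta_i$ satisfies $\|\calN_i\|_F\le\frac{1}{2}\|\Delta_i\|_F^2$ — a genuinely second-order quantity. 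Summing over $i$ (with $\calP_{\text{T}}=\calI$ for $i=N$) and telescoping the multilinear difference $\calX-\calX^\star$ into the $N$ terms that replace one factor at a time by its difference (current factors to the left, rotated ground-truth factors to the right), I would reach
\[
\sum_{i=1}^{N}\bigl\langle \Delta_i,\ \calP_{\text{T}}(\partial_{L(\mX_i)}F)\bigr\rangle=\bigl\langle \calX-\calX^\star,\ \partial f(\calX)\bigr\rangle+\bigl\langle \calE,\ \partial f(\calX)\bigr\rangle-\sum_{i=1}^{N-1}\bigl\langle \calN_i,\ \partial_{L(\mX_i)}F\bigr\rangle,
\]
where $\calE$ collects the higher-order terms arising from replacing the remaining rotated ground-truth factors by current ones; each such term carries at least two factor differences and is hence $O(\text{dist}^2(\{\mX_i\},\{\mX_i^\star\}))$.

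The first term is bounded below by \Cref{Robust Regularity condition of full tensor} as $((1-2p_s)\sqrt{2/\pi}-\delta_{2\overline{r}})\|\calX-\calX^\star\|_F$; using monotonicity of the RIP constant in the rank I replace $\delta_{2\overline{r}}$ by $\delta_{(N+1)\overline{r}}$, which is legitimate because the intermediate tensors $\calD_i$, the $\calN_i$-tensors, and $\calE$ are built from mixed current/ground-truth factors and have TT rank up to $(N+1)\overline{r}$ — precisely the RIP order at which \Cref{L1 RIP} must be applied to each of them. For the two error sums I would pass back to the tensor domain and bound each by $(\sqrt{2/\pi}+\delta_{(N+1)\overline{r}})$ times the Frobenius norm of the associated rank-controlled tensor via the upper $\ell_1/\ell_2$-RIP estimate; multiplying out factor norms ($\|L(\mX_i)\|=1$ for $i\le N-1$, spectral norms controlled by $\ol{\sigma}(\calX^\star)$ inside the ball) shows both sums are at most a constant times $(2N^2-2N+1)\,\ol{\sigma}^{-1}(\calX^\star)$ of $\text{dist}^2$, hence of order $\|\calX-\calX^\star\|_F^2$.

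Finally I would absorb these second-order errors into half of the first-order main term using $\|\calX-\calX^\star\|_F\le\sqrt{b}$, leaving a lower bound of the form $\frac{1}{2}((1-2p_s)\sqrt{2/\pi}-\delta_{(N+1)\overline{r}})\|\calX-\calX^\star\|_F$, and then convert it to the claimed $\text{dist}^2$ form by writing $\|\calX-\calX^\star\|_F\ge\|\calX-\calX^\star\|_F^2/\sqrt{b}$ and invoking the lower bound $\|\calX-\calX^\star\|_F^2\ge\text{dist}^2/\bigl(8(N+1+\sum_{i=2}^{N-1}r_i)\kappa^2(\calX^\star)\bigr)$ of \Cref{LOWER BOUND OF TWO DISTANCES}; the explicit value of $b$ is engineered so that the constants line up with $\frac{(1-2p_s)\sqrt{2/\pi}-\delta_{(N+1)\overline{r}}}{4\sqrt{2(N+1+\sum_{i=2}^{N-1}r_i)}\kappa(\calX^\star)}$. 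I expect the main obstacle to be the bookkeeping for $\calE$ and the $\calN_i$: establishing the correct TT-rank ceiling $(N+1)\overline{r}$ so that the $\ell_1/\ell_2$-RIP applies to every intermediate tensor, and tracking the $O(N^2)$ cross terms (reflected in the factor $2N^2-2N+1$ appearing in $b$) so that they are provably dominated by the first-order term within the prescribed radius.
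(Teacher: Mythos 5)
Your proposal reproduces the paper's argument in all of its essential steps: the chain rule giving the factor subgradients, the second-order identity $\calP^{\perp}_{\text{T}_{L(\mX_i)}\text{St}}\bigl(L(\mX_i)-L_{\mR}(\mX_i^\star)\bigr)=\frac{1}{2}L(\mX_i)\bigl(L(\mX_i)-L_{\mR}(\mX_i^\star)\bigr)^\top\bigl(L(\mX_i)-L_{\mR}(\mX_i^\star)\bigr)$ for the tangent projection, the telescoping that isolates $\langle \calX-\calX^\star,\partial f(\calX)\rangle$ plus a remainder built from at least two factor differences (the paper's $\vh$), sharpness for the first-order term, $\ell_1/\ell_2$-RIP at rank $(N+1)\overline{r}$ for the remainders, and absorption of the second-order terms inside $\calC(b)$. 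Up to that point the proposal is sound and matches the paper.

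The genuine gap is your endgame. The conversion $\|\calX-\calX^\star\|_F\geq\|\calX-\calX^\star\|_F^2/\sqrt{b}$ followed by \Cref{LOWER BOUND OF TWO DISTANCES} does not yield the claimed constant: carried out, it gives a lower bound with coefficient $\frac{3\sqrt{2N^2-2N+1}\,(\sqrt{2/\pi}+\delta_{(N+1)\overline{r}})}{4\,\ol{\sigma}(\calX^\star)}$ in front of $\text{dist}^2$, and this dominates the claimed coefficient $\frac{(1-2p_s)\sqrt{2/\pi}-\delta_{(N+1)\overline{r}}}{4\sqrt{2(N+1+\sum_{i=2}^{N-1}r_i)}\,\kappa(\calX^\star)}$ only if $\underline{\sigma}(\calX^\star)\leq\frac{3\sqrt{2N^2-2N+1}\sqrt{2(N+1+\sum_{i=2}^{N-1}r_i)}\,(\sqrt{2/\pi}+\delta_{(N+1)\overline{r}})}{(1-2p_s)\sqrt{2/\pi}-\delta_{(N+1)\overline{r}}}$, i.e., only under an upper bound on the scale of $\calX^\star$ that is nowhere assumed. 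In fact no argument can close this step, because the $\text{dist}^2$ form of the inequality is scale-inconsistent: under $(\calX,\calX^\star,\vs)\mapsto(t\calX,t\calX^\star,t\vs)$ the left-hand side is homogeneous of degree one (only $L(\mX_N)$, $L_{\mR}(\mX_N^\star)$, and $\partial_{L(\mX_i)}F$ for $i<N$ pick up the factor $t$), while $\text{dist}^2/\kappa(\calX^\star)$ is homogeneous of degree two and membership in $\calC(b)$ is scale-invariant, so taking $t$ large violates the stated inequality. What the paper's proof actually establishes---and what the proof of \Cref{Local Convergence of SGD in the l1 sensing_Theorem} actually uses---is the first-power bound $\frac{(1-2p_s)\sqrt{2/\pi}-\delta_{(N+1)\overline{r}}}{4\sqrt{2(N+1+\sum_{i=2}^{N-1}r_i)}\,\kappa(\calX^\star)}\,\text{dist}(\{\mX_i\},\{\mX_i^\star\})$; the square in the lemma statement is evidently a typo. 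The correct move is to stop at this first-power bound and flag the statement, not to claim that $b$ was engineered so the constants line up, because they cannot.

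A secondary, fixable slip: you bound both remainder sums by $O(\|\calX-\calX^\star\|_F^2)$ and then absorb them into half of the main term $((1-2p_s)\sqrt{2/\pi}-\delta_{(N+1)\overline{r}})\|\calX-\calX^\star\|_F$. Converting the natural $O(\text{dist}^2/\ol{\sigma}(\calX^\star))$ remainder bounds into $\|\calX-\calX^\star\|_F^2$ through \Cref{LOWER BOUND OF TWO DISTANCES} costs a factor $8(N+1+\sum_{i=2}^{N-1}r_i)\kappa^2(\calX^\star)$, and with the prescribed $b$ the resulting remainder bound then equals the entire main term rather than half of it. The paper avoids this round-trip loss by keeping the remainders in $\text{dist}^2$ form and applying \Cref{LOWER BOUND OF TWO DISTANCES} only to the main term $\|\calX-\calX^\star\|_F$, which is exactly what makes the absorption close with the required factor-of-two margin inside $\calC(b)$.
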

The proof is shown in {Appendix} \ref{Proof of the Robust regularity condition for factor tensor}. This result guarantees a positive correlation between the errors $-\{ L(\mX_i)-L_{\mR}(\mX_i^\star)\}_{i=1}^N$ and negative Riemannian search directions $\{\calP_{\text{T}_{L(\mX_i)} \text{St}}(\partial_{L(\mX_{i})}F(\mX_1, \dots, \mX_N))\}_{i=1}^N$ for $N$ factors in the Riemannian space, i.e., negative Riemannian direction points towards the true factors. When initialed properly, we then obtain a linear convergence of the FRSubGM with a diminishing step size as following:
\begin{theorem} (Local linear convergence of the FRSubGM)
\label{Local Convergence of SGD in the l1 sensing_Theorem}
Let the ground truth tensor $\calX^\star$ be in TT format with ranks $\vr = (r_1,\dots, r_{N-1})$. Assume that the linear map $\calA$ obeys the $\ell_1/\ell_2$-RIP and sharpness with a constant $\delta_{(N+1)\overline{r}}\leq(1-2p_s)\sqrt{2/\pi}$. Suppose that the FRSubGM in \eqref{RIEMANNIAN_GRADIENT_DESCENT_1_1 L1 loss} and \eqref{RIEMANNIAN_GRADIENT_DESCENT_1_2 L1 loss} is initialized with $\calX^{(0)}$ satisfying $\calX^{(0)}\in\calC(b)$. In addition, we set the step size $\mu_t = \lambda q^t$ with $\lambda=\frac{(1-2p_s)\sqrt{2/\pi} - \delta_{(N+1)\overline{r}}}{\sqrt{2(N+1+\sum_{i=2}^{N-1}r_i)}(9N - 5)(\sqrt{2/\pi} + \delta_{(N+1)\overline{r}})^2\kappa(\calX^\star)}\text{dist}(\{\mX_i^{(0)}\},\{ \mX_i^\star\})$ and $q= \sqrt{1- \frac{ ((1-2p_s)\sqrt{2/\pi} -  \delta_{(N+1)\overline{r}})^2 }{8(N+1+\sum_{i=2}^{N-1}r_i)(9N - 5)(\sqrt{2/\pi} + \delta_{(N+1)\overline{r}})^2\kappa^2(\calX^\star)}}$. Then, the iterates $\{\mX_i^{(t)}\}_{t\geq 0}$ generated by the FRSubGM will converge linearly to $\{ \mX_i^\star\}$ (up to rotation):
\begin{eqnarray}
    \label{Local Convergence of SGD in the L1 sensing_Theorem_1}
    \text{dist}^2(\{\mX_i^{(t)}\},\{ \mX_i^\star\})\leq \text{dist}^2(\{\mX_i^{(0)}\},\{ \mX_i^\star\})q^{2t}.
\end{eqnarray}
\end{theorem}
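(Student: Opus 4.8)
The plan is to establish a single one-step contraction for the factor distance $\dist^2(\{\mX_i^{(t)}\},\{\mX_i^\star\})$ and then close a geometric-step induction, mirroring the analysis of PSubGM in \Cref{Local Convergence of PGD in the sensing_Theorem} but carried out in the factor geometry. Write $M_1=(1-2p_s)\sqrt{2/\pi}-\delta_{(N+1)\ol r}$, $M_2=\sqrt{2/\pi}+\delta_{(N+1)\ol r}$ and $P=N+1+\sum_{i=2}^{N-1}r_i$, and let $\mR^{(t)}$ be the rotations attaining the minimum in \eqref{BALANCED NEW DISTANCE BETWEEN TWO TENSORS} for $\dist^2(\{\mX_i^{(t)}\},\{\mX_i^\star\})$. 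Because $\dist^2$ is itself a minimum over rotations, I would upper bound $\dist^2(\{\mX_i^{(t+1)}\},\{\mX_i^\star\})$ by evaluating \eqref{BALANCED NEW DISTANCE BETWEEN TWO TENSORS} at the (now suboptimal) rotations $\mR^{(t)}$, noting that each target $L_{\mR^{(t)}}(\mX_i^\star)$ still lies on the Stiefel manifold for $i\in[N-1]$. Expanding each factor update against its rotated target---using the non-expansiveness of the polar retraction toward a manifold point for $i\in[N-1]$ and a direct Euclidean expansion for $i=N$---and weighting the $i\le N-1$ terms by $\ol\sigma^2(\calX^\star)$ so that the step weight $\mu_t/\ol\sigma^2(\calX^\star)$ cancels, all cross terms aggregate with coefficient $-2\mu_t$ and all quadratic terms into a single energy, giving
\begin{equation}
\dist^2(\{\mX_i^{(t+1)}\},\{\mX_i^\star\})\le \dist^2(\{\mX_i^{(t)}\},\{\mX_i^\star\})-2\mu_t\,C_t+\mu_t^2\,G_t,
\end{equation}
with $C_t=\sum_{i=1}^N\langle L(\mX_i^{(t)})-L_{\mR^{(t)}}(\mX_i^\star),\calP_{\text{T}_{L(\mX_i)}\text{St}}(\partial_{L(\mX_i)}F)\rangle$ and $G_t=\sum_{i=1}^{N-1}\ol\sigma^{-2}(\calX^\star)\|\calP_{\text{T}_{L(\mX_i)}\text{St}}(\partial_{L(\mX_i)}F)\|_F^2+\|\partial_{L(\mX_N)}F\|_2^2$.

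The two remaining ingredients bound $C_t$ below and $G_t$ above. The lower bound on $C_t$ is precisely the robust regularity condition of \Cref{Robust Regularity condition of factor tensor}, which---valid whenever $\calX^{(t)}\in\calC(b)$---bounds $C_t$ below by a positive multiple of the distance, furnishing the sharpness a nonsmooth objective needs for descent. For $G_t$ I would route each Euclidean factor subgradient through the chain rule, writing $\langle\partial_{L(\mX_i)}F,\mG\rangle=\langle\partial f(\calX),\calH_i(\mG)\rangle$, where $\calH_i(\mG)$ is the TT tensor obtained by replacing the $i$-th factor by $\mG$ (of TT rank at most $(N+1)\ol r$), and bound this by the $\ell_1/\ell_2$-RIP upper constant $M_2$ of \Cref{L1 RIP} times $\|\calH_i(\mG)\|_F$. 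Since the left factors are orthonormal and $\ol\sigma(\calX^{(t)})\le\tfrac32\ol\sigma(\calX^\star)$ on $\calC(b)$, the surrounding environment norms are controlled (of order $\ol\sigma(\calX^\star)$ for $i\le N-1$ and of order one for $i=N$); combined with the non-expansiveness of the tangent-space projection $\calP_{\text{T}_{L(\mX_i)}\text{St}}$, this yields a uniform bound $G_t\le\tfrac{(9N-5)M_2^2}{4}$, independent of $t$.

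To finish, substitute $\mu_t=\lambda q^t$ with the stated $\lambda$ and $q$. Under the inductive hypothesis $\dist(\{\mX_i^{(t)}\},\{\mX_i^\star\})\le q^t\dist(\{\mX_i^{(0)}\},\{\mX_i^\star\})$, the right-hand side of the recursion is an upward parabola in the scalar $\dist(\{\mX_i^{(t)}\},\{\mX_i^\star\})$ over $[0,q^t\dist(\{\mX_i^{(0)}\},\{\mX_i^\star\})]$; its vertex sits near the left end (because $1-q^2$ is tiny), so its maximum is taken at the right endpoint, where the calibrated $\lambda,q$ make it exactly $q^{2(t+1)}\dist^2(\{\mX_i^{(0)}\},\{\mX_i^\star\})$, closing \eqref{Local Convergence of SGD in the L1 sensing_Theorem_1}. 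I must also discharge the invariant used throughout: since the distance is non-increasing, \Cref{LOWER BOUND OF TWO DISTANCES} keeps $\|\calX^{(t)}-\calX^\star\|_F^2$ within $\calC(b)$ and keeps $\ol\sigma(\calX^{(t)})\le\tfrac32\ol\sigma(\calX^\star)$, so both the regularity condition and the gradient bound apply at every iteration.

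The main obstacle will be the manifold bookkeeping rather than the final induction. The delicate points are: establishing the distance-non-increasing estimate for the polar retraction toward the \emph{rotated} ground-truth factor $L_{\mR^{(t)}}(\mX_i^\star)$ on $\text{St}$ (controlling the second-order retraction discrepancy uniformly in $t$); simultaneously maintaining the coupled invariants that every iterate remains in $\calC(b)$ with $\ol\sigma(\calX^{(t)})\le\tfrac32\ol\sigma(\calX^\star)$; and aligning the per-factor rotations $\mR^{(t)}$ consistently across the $N$ factors so that the aggregated $C_t$ is exactly the quantity governed by \Cref{Robust Regularity condition of factor tensor}. Making these estimates uniform in $t$, so that the single contraction factor $q$ holds at every step, is where the analysis is most demanding.
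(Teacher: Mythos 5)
Your proposal follows the paper's proof essentially step for step: the one-step expansion of $\dist^2(\{\mX_i^{(t+1)}\},\{\mX_i^\star\})$ against the suboptimal rotations $\mR^{(t)}$ via the non-expansiveness of the polar retraction (\Cref{NONEXPANSIVENESS PROPERTY OF POLAR RETRACTION_1}), the cross-term lower bound via \Cref{Robust Regularity condition of factor tensor}, the uniform bound $G_t\le\frac{9N-5}{4}(\sqrt{2/\pi}+\delta_{(N+1)\ol r})^2$ (your chain-rule/duality route is equivalent to the paper's explicit computation of $\partial_{L(\mX_i)}F$ as environment matrices times the full-tensor subgradient, both resting on \Cref{Upper bound of subgradient of f} and left-orthogonality), and the same diminishing-step-size induction as in Appendix~\ref{Proof of the induction in the full tensor}.

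The one step that does not close as you state it is the invariant maintenance. You claim that, because the distance is non-increasing, \Cref{LOWER BOUND OF TWO DISTANCES} keeps $\|\calX^{(t)}-\calX^\star\|_F^2$ inside $\calC(b)$; that implication loses constants in the round trip between the two metrics. From $\calX^{(0)}\in\calC(b)$, the lower bound \eqref{LOWER BOUND OF TWO DISTANCES_1 main paper} gives only $\dist^2(\{\mX_i^{(0)}\},\{\mX_i^\star\})\le 8\big(N+1+\sum_{i=2}^{N-1}r_i\big)\kappa^2(\calX^\star)\,b$, and converting back at iterate $t$ with \eqref{UPPER BOUND OF TWO DISTANCES_1 main paper} yields only $\|\calX^{(t)}-\calX^\star\|_F^2\le 18N\big(N+1+\sum_{i=2}^{N-1}r_i\big)\kappa^2(\calX^\star)\,b\gg b$, so you cannot conclude $\calX^{(t)}\in\calC(b)$ for $t\ge 1$, and hence cannot literally invoke \Cref{Robust Regularity condition of factor tensor} (whose hypothesis is $\calX\in\calC(b)$) at later iterates. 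The paper avoids this circularity by making the induction invariant the distance bound itself, namely \eqref{upper bound of error distance any t}: membership in $\calC(b)$ is used exactly once, at $t=0$, to seed that bound, and the bound is then propagated because the distance is non-increasing. This suffices because the proof of \Cref{Robust Regularity condition of factor tensor} never uses $\calC(b)$ beyond its consequence \eqref{upper bound of the distance} (together with left-orthogonality and the resulting $\sigma_1^2\big({\calX^{(t)}}^{\<i\>}\big)\le\frac{9}{4}\ol\sigma^2(\calX^\star)$), so both the regularity condition and the subgradient bound remain valid at every iterate under the distance invariant. With that substitution your argument is exactly the paper's; everything else, including the endpoint analysis of the convex quadratic in the induction, matches.
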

The proof is provided in {Appendix} \ref{Local Convergence Proof of SGD l1 Tensor Sensing}. It is important to note that the convergence rate of FRSubGM is still linear, but the convergence rate of FRSubGM depends not only on the values of $\delta_{(N+1)\overline{r}}$ and $p_s$, but also on the ratio $\frac{1}{\kappa^2(\calX^\star)}$ and the parameter $N$. Consequently, the convergence rate of FRSubGM could be slower than that of PSubGM. In addition, according to \Cref{LOWER BOUND OF TWO DISTANCES}, we can also derive the linear convergence in terms of the entire tensor, i.e., $\|\calX^{(t)} - \calX^\star \|_F \leq O(\frac{\underline{\sigma}^2(\calX^\star)}{N^2 \ol r}) q^{2t}$. Ultimately, even though it may not be straightforward to choose exact deterministic values for $\lambda$ and $q$ in practice, we can still select values that are close to these desired values.

\subsection{Truncated spectral initialization}
The above local linear convergence for both PSubGM and FRSubGM requires an appropriate initialization. To achieve such an initialization, the spectral initialization method is commonly employed in the literature \cite{candes2015phase,luo2019optimal,cai2019nonconvex,Han20, qin2024guaranteed}. In the presence of outliers, we employ the truncated spectral initialization method \cite{zhang2018median,li2020non,tong2022accelerating}:
\begin{eqnarray}
    \label{Noisy TSPECTRAL INITIALIZATION}
    \calX^{(0)}=\text{SVD}_{\vr}^{tt}\bigg(\frac{1}{(1-p_s)m}\sum_{k=1}^my_k\calA_k \setI_{\{ |y_k|\leq |\vy|_{(\lceil p_s m \rceil)}  \}}  \bigg),
\end{eqnarray}
where $|\vy|_{(k)}$ denotes the $k$-th largest amplitude of $\vy$ and $\setI_{\{ |y_k|\leq |\vy|_{(\lceil p_s m \rceil)}  \}}$ indicates that this term is $1$ if $|y_k|\leq |\vy|_{(\lceil p_s m \rceil)}$ and $0$ otherwise. Recall that $\text{SVD}_{\vr}^{tt}(\cdot )$ is the TT-SVD algorithm for finding a TT approximation.

The following result ensures that such an initialization $\calX^{(0)}$ provides a good approximation of $\calX^\star$.
\begin{theorem}
\label{Error analysis of truncated spectral initialization}
Let the ground truth tensor $\calX^\star$ be in TT format with ranks $\vr = (r_1,\dots, r_{N-1})$. Suppose the linear map $\calA: \R^{d_{1}\times  \cdots \times d_{N}}\rightarrow \R^m$ is a Gaussian measurement operator where $\{\calA_k\}_{k=1}^m$ have i.i.d. standard Gaussian entries. Then with probability at least $1-e^{-\Omega(N\ol d\ol r^2 \log N)} - e^{-\Omega(\log ((1 - p_s) m))}$, the spectral initialization $\calX^{(0)}$ generated by \eqref{Noisy TSPECTRAL INITIALIZATION} satisfies
\begin{eqnarray}
    \label{upper bound of truncated spectral initialization main paper}
    \|\calX^{(0)} - \calX^\star \|_F \leq O\bigg(\frac{N\ol r \log ((1 - p_s) m) \|\calX^\star\|_F\sqrt{ \ol d \log N } }{\sqrt{(1-p_s)m}}\bigg).
\end{eqnarray}
\end{theorem}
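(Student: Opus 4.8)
The plan is to write $\calX^{(0)}=\text{SVD}_{\vr}^{tt}(\calM)$ with $\calM := \frac{1}{(1-p_s)m}\sum_{k=1}^m y_k\calA_k\setI_{\{|y_k|\le|\vy|_{(\lceil p_s m\rceil)}\}}$, and to control $\|\calX^{(0)}-\calX^\star\|_F$ through the way the rank-$\vr$ TT-SVD projection filters the (high-rank) noise in $\calM$. The first thing to stress is that the naive route fails: the centered noise $\calM-\calX^\star$ is spread over all $\prod_i d_i$ coordinates, so $\|\calM-\calX^\star\|_F$ scales like $\sqrt{\prod_i d_i}$ and the Frobenius quasi-optimality of the TT-SVD would only give an exponentially large bound (so \Cref{Perturbation bound for TT SVD} is not the right tool here). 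Instead I would measure the deviation only along low-TT-rank directions, i.e. bound
\[ R \ :=\ \sup_{\calU:\ \rank(\calU)\le(2r_1,\dots,2r_{N-1}),\ \|\calU\|_F=1}\ \langle \calM-\calX^\star,\ \calU\rangle . \]
Since both $\calX^{(0)}$ and $\calX^\star$ have TT ranks at most $\vr$, their difference has TT ranks at most $2\vr$, and the core step is a \emph{restricted} TT-SVD perturbation inequality---adapting the sequential Wedin-type analysis used for the noiseless TT spectral initialization in \cite{qin2024guaranteed}---that gives $\|\calX^{(0)}-\calX^\star\|_F\lesssim\sqrt{N}\,R$. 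The reason the restriction is legitimate is that each sequential unfolding arising in the TT-SVD is already compressed to size $O(\ol r\ol d)\times(\cdots)$ and its true singular subspaces are themselves low-TT-rank, so only the action of $\calM-\calX^\star$ on low-TT-rank test tensors ever enters, which is exactly what $R$ captures.

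Second, I would bound $R$ by concentration. For a fixed unit direction $\calU$, $\langle\calM-\calX^\star,\calU\rangle$ is a centered normalized sum of the scalars $y_k\setI_{\{\cdot\}}\langle\calA_k,\calU\rangle$, where $\langle\calA_k,\calU\rangle\sim\calN(0,1)$. After trimming, the surviving summands obey $|y_k|\le\tau$ with $\tau=|\vy|_{(\lceil p_s m\rceil)}$, so they are bounded sub-exponentials of scale $O(\tau)$; a Bernstein-type inequality then yields $|\langle\calM-\calX^\star,\calU\rangle|\lesssim\|\calX^\star\|_F\,\log((1-p_s)m)/\sqrt{(1-p_s)m}$ for each fixed $\calU$, with the logarithmic factor coming from the truncated tails and the maximal summand. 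I would then pass to the supremum by a covering argument: the unit-norm rank-$2\vr$ TT tensors admit an $\epsilon$-net of log-cardinality $O(N\ol d\ol r^2\log N)$, and $\calU\mapsto\langle\calM-\calX^\star,\calU\rangle$ is Lipschitz, so a union bound over the net followed by a refinement step gives
\[ R\ \lesssim\ \frac{\ol r\sqrt{N\ol d\log N}\,\log((1-p_s)m)}{\sqrt{(1-p_s)m}}\,\|\calX^\star\|_F , \]
on an event of failure probability $e^{-\Omega(N\ol d\ol r^2\log N)}+e^{-\Omega(\log((1-p_s)m))}$. Combining with $\|\calX^{(0)}-\calX^\star\|_F\lesssim\sqrt N\,R$ and absorbing the extra $\sqrt N$ produces exactly the stated bound, since $\sqrt N\cdot\ol r\sqrt{N\ol d\log N}=N\ol r\sqrt{\ol d\log N}$.

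The main obstacle is the trimming/outlier bookkeeping hidden inside the concentration step. I would split $\calM-\calX^\star$ into the fluctuation of the surviving clean measurements, the bias from discarding the largest-magnitude clean measurements, and the leakage from outliers that survive the threshold, and show that each, tested against low-TT-rank directions, is of the order above rather than a constant fraction of $\|\calX^\star\|_F$. This forces one to first control the random threshold $\tau$ (showing it concentrates around a population quantile, so the trimmed second moment stays proportional to $\|\calX^\star\|_F^2$ and the trimmed statistic remains approximately aligned with $\calX^\star$), and then argue that the symmetric difference between the kept index set and the oracle ``drop-all-outliers'' set has size $O(p_s m)$ and enters only through the same restricted-deviation mechanism; this is where the robustness of the trimmed/$\ell_1$ formulation is actually used. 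The restricted TT-SVD perturbation inequality is a secondary but still essential ingredient, reused and sharpened from the spectral-initialization machinery of \cite{qin2024guaranteed}.
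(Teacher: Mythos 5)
Your proposal follows essentially the same route as the paper's proof: the paper also bounds $\|\calX^{(0)}-\calX^\star\|_F$ by $(1+\sqrt{N-1})$ times the restricted Frobenius norm $\|\calM-\calX^\star\|_{F,2\ol r}$ (your quantity $R$, obtained via the TT-SVD perturbation bound from \cite{qin2024guaranteed}), controls the trimming threshold by $|\vy|_{(\lceil p_s m\rceil)}\leq\max_k|\<\calA_k,\calX^\star\>|\leq O(\|\calX^\star\|_F\log((1-p_s)m))$, and then combines a sub-gaussian (Hoeffding-type, in place of your Bernstein-type) concentration bound for fixed low-TT-rank test tensors with an $\epsilon$-net of log-cardinality $O(N\ol d\ol r^2\log N)$ over rank-$2\vr$ TT tensors. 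The only cosmetic difference is that your outlier/bias bookkeeping in the last paragraph is spelled out more explicitly than in the paper, which handles it implicitly through the same threshold bound and restricted-deviation mechanism.
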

The proof is provided in {Appendix} \ref{proof of truncated spectral initialization}. In summary, \Cref{Error analysis of truncated spectral initialization} indicates that a sufficiently large $m$ allows for the identification of a suitable initialization that is appropriately close to the ground truth. Additionally, although $p_s$ is specified in \eqref{Noisy TSPECTRAL INITIALIZATION}, it is generally unknown in practical scenarios. Therefore, any constant $\alpha\in[0,1]$ can substitute for $p_s$ in $\lceil p_s m \rceil$. It should be noted that while a smaller $\alpha$ may eliminate more measurements containing outliers, this might necessitate a larger number of measurements $m$ to achieve a satisfactory initialization.

\section{Numerical Experiments}
\label{Numerical experiments}

In this section, we conduct  numerical experiments to evaluate the performance of PSubGM and  FRSubGM algorithms in robust TT recovery.
We generate an order-$N$ ground truth tensor $\calX^\star\in\R^{d_1\times\cdots \times d_N}$ with ranks $\vr = (r_1,\dots, r_{N-1})$ by truncating a random Gaussian tensor using a sequential SVD, followed by normalizing it to unit Frobenius norm. To simplify the selection of parameters, we let $d=d_1=\cdots=d_N$ and $r=r_1=\cdots = r_{N-1}$. We then obtain measurements $\{y_k\}_{k=1}^m$ in \eqref{Noisy Definition of tensor sensing L1 loss} from measurement operator $\calA_k$ which is a random tensor with independent entries generated from the normal distribution, ensuring that $\<\calA_k,\calX^\star  \>\sim\calN(0,1)$, and outlier $s_k \sim\calN(0,10), k\in\calS$ where $|\calS| = p_s m$. The elements in $\calS$ are randomly selected from the set $\{1,\dots, m \}$. We conduct 20 Monte Carlo trials and take the average over the 20 trials.

\begin{figure}[!ht]
\centering
\subfigure[]{
\begin{minipage}[t]{0.31\textwidth}
\centering
\includegraphics[width=5.5cm]{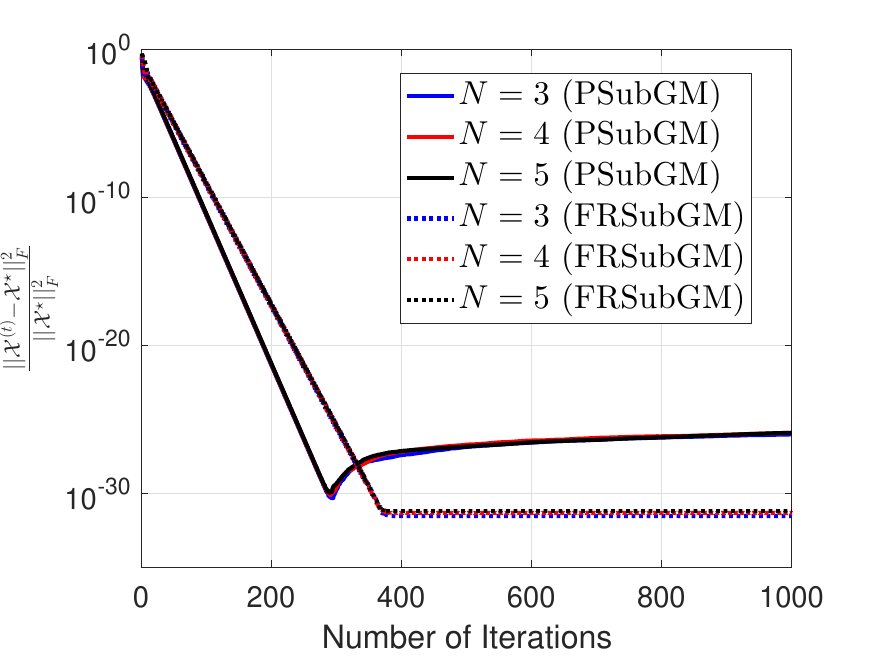}
\end{minipage}
\label{TT_sensing robust_N}
}
\subfigure[]{
\begin{minipage}[t]{0.31\textwidth}
\centering
\includegraphics[width=5.5cm]{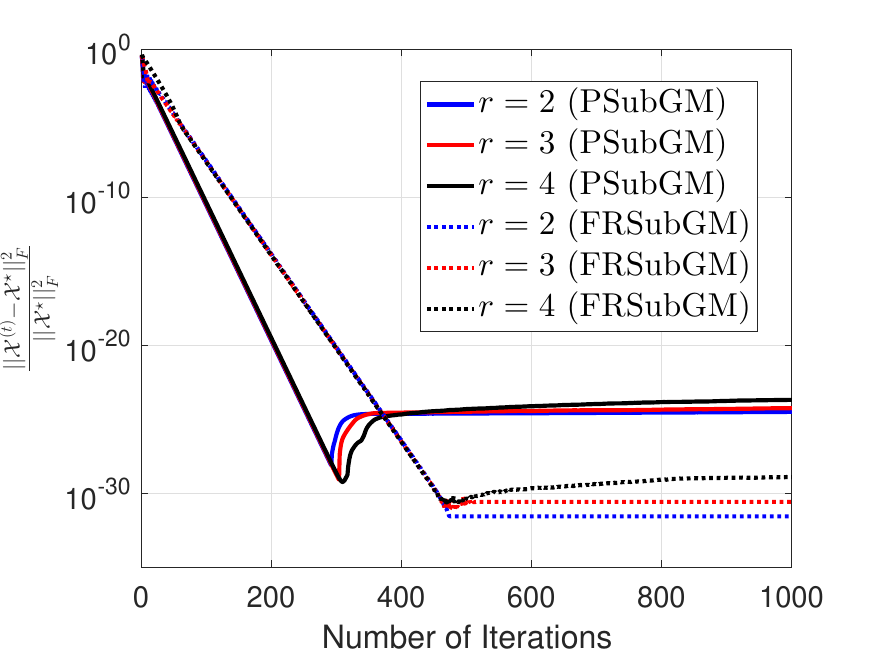}
\end{minipage}
\label{TT_sensing robust_r}
}
\subfigure[]{
\begin{minipage}[t]{0.31\textwidth}
\centering
\includegraphics[width=5.5cm]{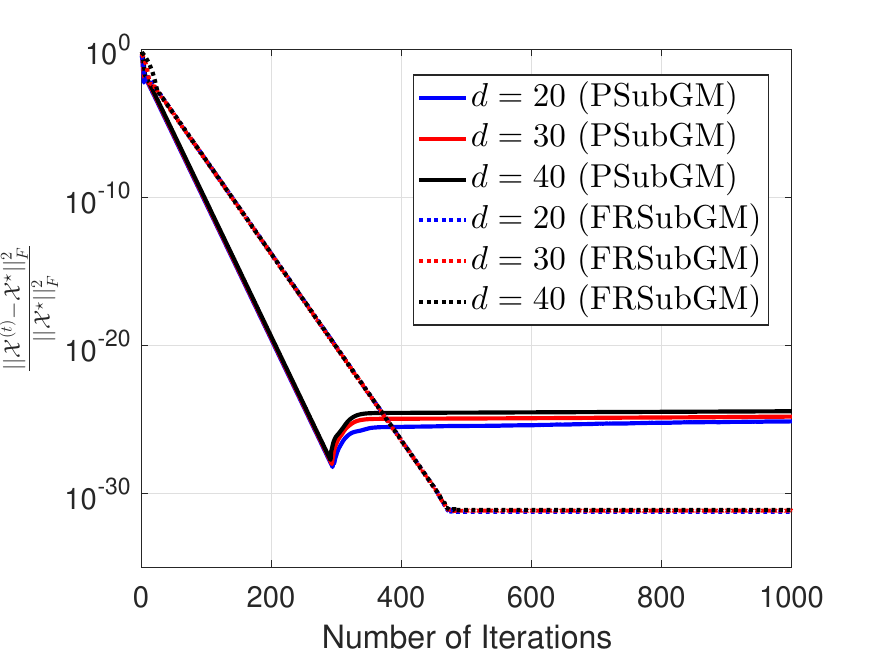}
\end{minipage}
\label{TT_sensing robust_d}
}
\caption{Performance comparison of the PSubGM and FRSubGM in the robust tensor recovery, (a) for different $N$ with $d = 6$, $r = 2$, $m = 1500$, $p_s = 0.3$, $\lambda = 0.5$, $q = 0.9$ (PSubGM) and $q = 0.91$ (FRSubGM)  (b) for different $r$ with $d = 40$, $N = 3$, $m = 12000$, $p_s = 0.3$, $\lambda = 0.5$, $q = 0.9$ (PSubGM) and  $q = 0.93$ (FRSubGM) (c) for different $d$ with $N = 3$, $r= 2$, $m = 3000$, $p_s = 0.3$,  $\lambda = 0.5$, $q = 0.9$ (PSubGM) and  $q = 0.93$ (FRSubGM).}
\end{figure}

\begin{figure}[!ht]
\centering
\subfigure[]{
\begin{minipage}[t]{0.45\textwidth}
\centering
\includegraphics[width=5cm]{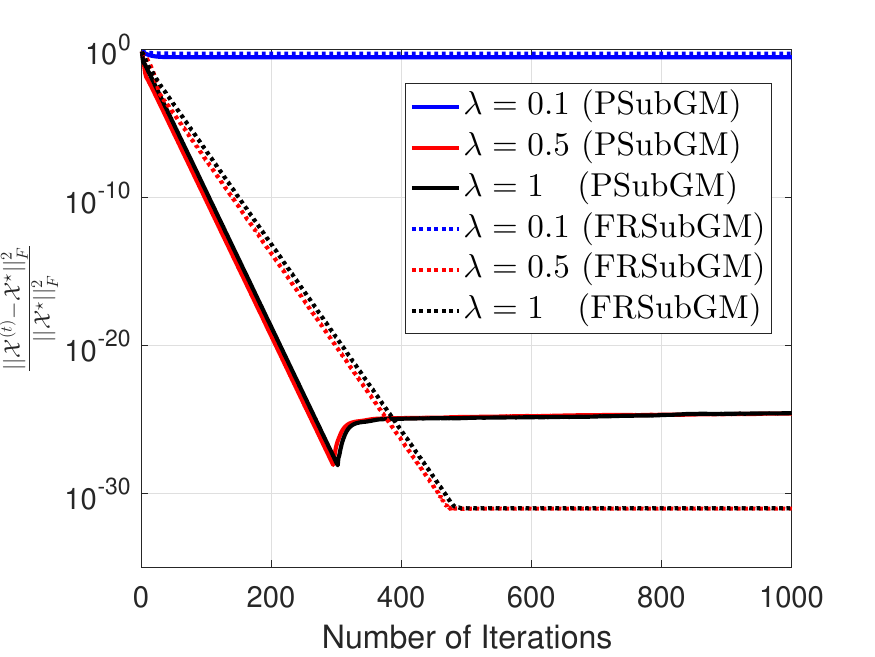}
\end{minipage}
\label{TT_sensing robust_lambda}
}
\subfigure[]{
\begin{minipage}[t]{0.45\textwidth}
\centering
\includegraphics[width=5cm]{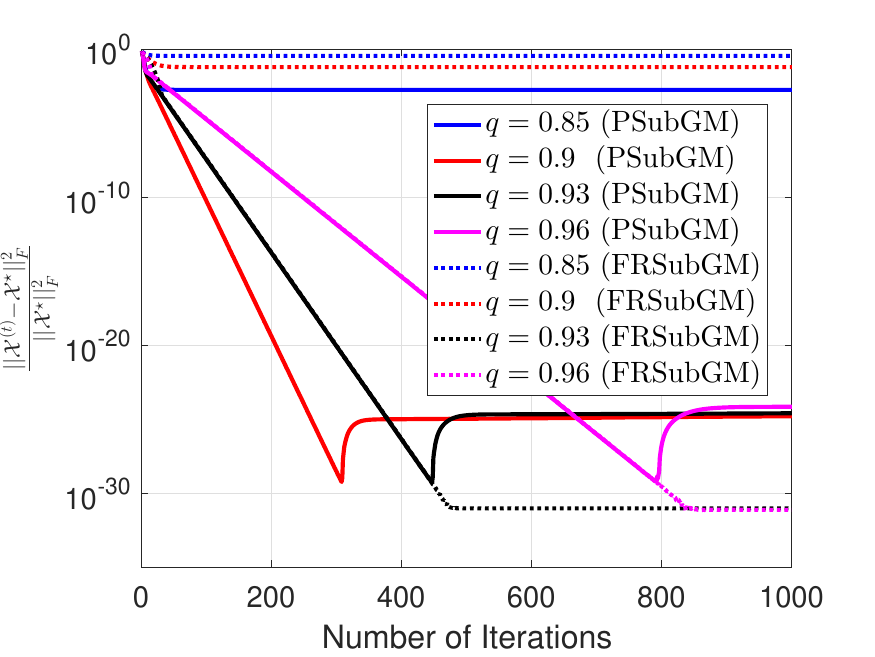}
\end{minipage}
\label{TT_sensing robust_q}
}
\caption{Performance comparison of the PSubGM and FRSubGM in the robust tensor recovery, (a) for different $\lambda$ with $N = 3$, $d = 30$, $r = 2$, $m = 2000$, $p_s = 0.3$, $q = 0.9$ (PSubGM) and $q = 0.93$ (FRSubGM), (b) for different $q$ with $N = 3$, $d = 30$, $r = 2$, $m = 2000$, $p_s = 0.3$ and $\lambda = 0.5$.}
\end{figure}

\begin{figure}[!ht]
\centering
\includegraphics[width=6.5cm, keepaspectratio]%
{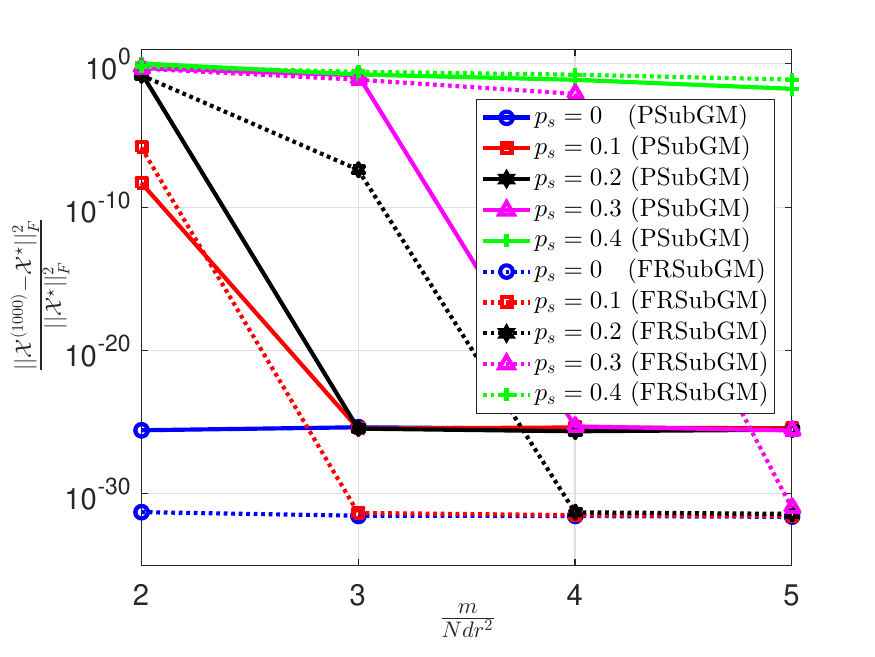}
\caption{Recovery error of the PSubGM and FRSubGM for different $p_s$ and $m$ with $N = 3$, $d = 10$, $r = 2$, $\lambda = 0.5$, $q = 0.9$ (PSubGM) and $q = 0.91$ (FRSubGM).}
\label{The error bound of TT sensing with different_m and ps}
\end{figure}

In the first experiment, we evaluate the performance of PSubGM and FRSubGM in terms of $N$, $d$, and $r$. Figures~\ref{TT_sensing robust_N}-\ref{TT_sensing robust_d} clearly demonstrate that PSubGM exhibits a faster convergence speed compared to FRSubGM. However, the final recovery error of PSubGM is slightly higher than that of FRSubGM, which can possibly be attributed to the sub-optimality of the TT-SVD. Notably, unlike the significantly slower convergence rates of IHT \cite{Rauhut17} and FRGD \cite{qin2024guaranteed} as $N$, $r$, and $d$ increase observed from \cite[Figure 1]{qin2024computational}, PSubGM and FRSubGM do not exhibit such degradation, attributable to the use of a diminishing step size.

In the second experiment, we test the performance of PSubGM and  FRSubGM in terms of $\lambda$ and $q$. In Figures~\ref{TT_sensing robust_lambda} and \ref{TT_sensing robust_q}, we can observe that both larger and smaller values of $\lambda$ or $q$ can potentially result in slower convergence or worse recovery error. Therefore, it is crucial to carefully fine-tune the parameters $\lambda$ and $q$ to ensure optimal performance.

In the third experiment, we test the performance of PSubGM and  FRSubGM in terms of $p_s$ and $m$. Figure~\ref{The error bound of TT sensing with different_m and ps} illustrates the relationship between recovery error and $p_s$ and $m$. It is evident that a larger value of $\frac{m}{Ndr^2}$ ensures better performance, as the $\ell_1/\ell_2$-RIP constant $\delta$ is inversely proportional to $m$. Moreover, as $p_s$ increases, a larger number of measurement operators $m$ is required.

In the last experiment, we investigate the necessary value of $m$ for different $N$ utilizing PSubGM and FRSubGM. This investigation is illustrated in Figures~\ref{TT_sensing robust_different N and m PSubGM} and \ref{TT_sensing robust_different N and m RSubGM}, where we evaluate the success rate of achieving $\frac{\|\calX^{(1000)} - \calX^\star \|_F^2}{\|\calX^\star\|_F^2}\leq10^{-5}$ over $100$ independent trials.  Our findings demonstrate a trend: as the value of $m$ increases and $N$ decreases, the success rate of recovery also improves. In addition, we establish a linear correlation between the number of measurements, $m$, and the tensor oders $N$, aligning with the conditions stipulated in \Cref{L1 RIP}. It is important to recognize that FRSubGM necessitates a larger value of $m$ compared to PSubGM. This difference arises from our analysis, where FRSubGM must adhere to the $(N+1) \ol r$-$\ell_1/\ell_2$-RIP, whereas PSubGM only requires the $3 \ol r$-$\ell_1/\ell_2$-RIP.

\begin{figure}[!ht]
\centering
\subfigure[]{
\begin{minipage}[t]{0.47\textwidth}
\centering
\includegraphics[width=5.8cm]{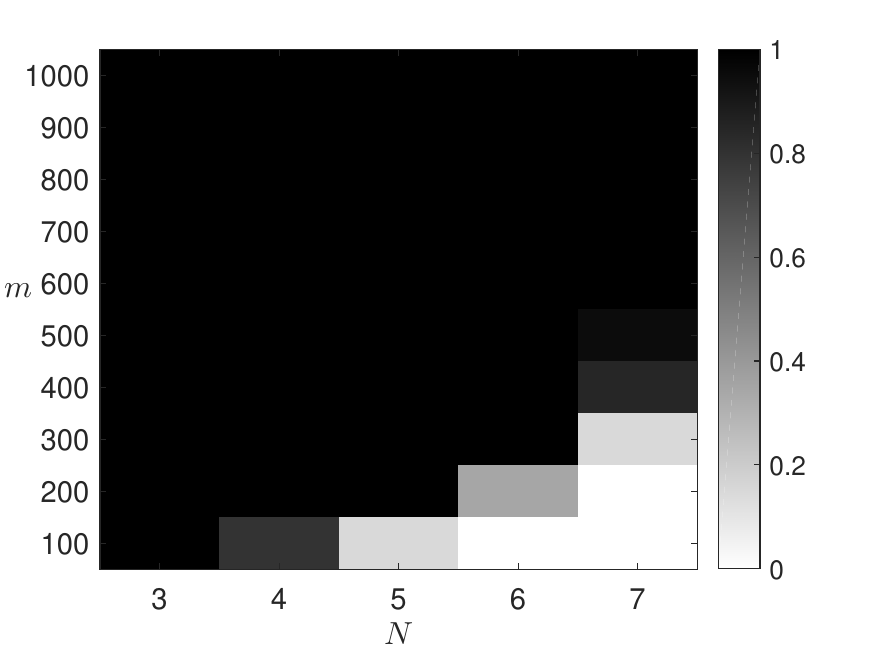}
\end{minipage}
\label{TT_sensing robust_different N and m PSubGM}
}
\subfigure[]{
\begin{minipage}[t]{0.47\textwidth}
\centering
\includegraphics[width=5.8cm]{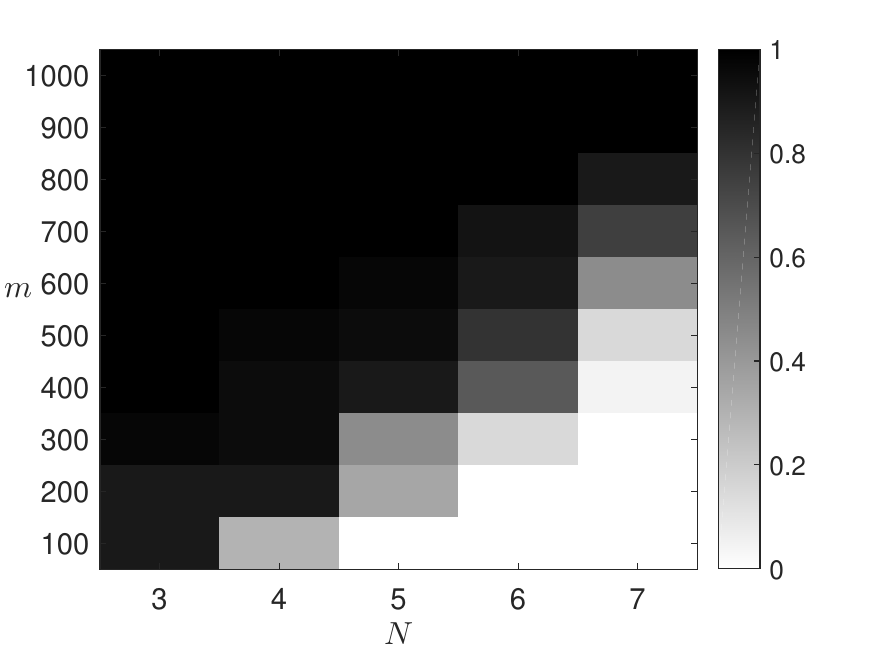}
\end{minipage}
\label{TT_sensing robust_different N and m RSubGM}
}
\caption{Performance comparison of (a) PSubGM and (b) FRSubGM in the robust tensor recovery, for different $N$ and $m$ with $d=4$, $r=2$, $p_s = 0.05$, $\lambda = 0.07$ and $q=0.99$.}
\end{figure}

\section{Conclusion}
\label{conclusion}

In this paper, we develop efficient algorithms with guaranteed performance for robust tensor train (TT) recovery in the presence of outliers. We first prove the $\ell_1/\ell_2$-RIP of the Gaussian measurement operator and the sharpness property of the robust $\ell_1$ loss formulation, implying the possibility of exact recovery even when the measurements are corrupted by outliers. We then propose two iterative algorithms, namely the projected subgradient method (PSubGM) and the factorized Riemannian subgradient method (FRSubGM), to solve the corresponding recovery problems. With suitable initialization and diminishing step sizes, we show that both PSubGM and FRSubGM converge to the ground truth tensor at a linear rate and can tolerate a significant amount of outliers, with the outlier ratio being as large as half. We also demonstrate that a truncated spectral method can provide an appropriate initialization to ensure the local convergence of both algorithms.

As mentioned earlier, the convergence rate of the FRSubGM  is influenced by the condition number $\kappa(\calX^\star)$; as $\kappa(\calX^\star)$ increases, the convergence rate becomes slower. In future research, it would be worthwhile to incorporate the scaled technique \cite{tong2021accelerating, tong2021low, TongTensor21} into our algorithms to mitigate the impact of $\calX^\star$ on convergence. Another promising avenue for future exploration is the investigation of robust overparameterized TT recovery, building upon the advancements made in the matrix case \cite{stoger2021small, jiang2022algorithmic, ding2022validation, xu2023power}.

\section*{Acknowledgment}
We acknowledge funding support from NSF Grants No.\ CCF-2241298 and ECCS-2409701. We thank the Ohio Supercomputer Center for providing the computational resources needed in carrying out this work.

\appendices

\section{Technical tools used in proofs}
\label{Technical tools used in proofs}

We present some useful results for the proofs in the next sections.

\begin{lemma}(\cite[Lemma 3]{qin2024guaranteed})
\label{EXPANSION_A1TOAN-B1TOBN_1}
For any $\mA_i,\mA^\star_i\in\R^{r_{i-1}\times r_i},i\in[N]$, we have
\begin{eqnarray}
    \label{EXPANSION_A1TOAN-B1TOBN_2}
    \mA_1\mA_2\cdots \mA_N-\mA_1^\star\mA_2^\star\cdots \mA_N^\star = \sum_{i=1}^N \mA_1^\star \cdots \mA_{i-1}^\star (\mA_{i} - \mA_i^\star) \mA_{i+1} \cdots \mA_N.
\end{eqnarray}

\end{lemma}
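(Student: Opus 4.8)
The plan is to establish the identity by a telescoping decomposition that interpolates between the fully unstarred product $\mA_1\cdots\mA_N$ and the fully starred product $\mA_1^\star\cdots\mA_N^\star$, swapping one factor at a time from left to right. Concretely, I would introduce the hybrid partial products
\begin{equation}
    P_i = \mA_1^\star\cdots\mA_i^\star\,\mA_{i+1}\cdots\mA_N, \qquad i = 0,1,\dots,N,
\end{equation}
with the boundary conventions $P_0 = \mA_1\cdots\mA_N$ (no starred factors) and $P_N = \mA_1^\star\cdots\mA_N^\star$ (no unstarred factors). Since each $\mA_i,\mA_i^\star\in\R^{r_{i-1}\times r_i}$, every $P_i$ is a well-defined $r_0\times r_N$ matrix, and the left-hand side of \eqref{EXPANSION_A1TOAN-B1TOBN_2} is exactly $P_0 - P_N$.

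The key step is then to exploit the telescoping cancellation $P_0 - P_N = \sum_{i=1}^N (P_{i-1} - P_i)$, together with the fact that consecutive hybrids differ only in their $i$-th factor:
\begin{equation}
    P_{i-1} - P_i = \mA_1^\star\cdots\mA_{i-1}^\star\,\mA_i\,\mA_{i+1}\cdots\mA_N - \mA_1^\star\cdots\mA_{i-1}^\star\,\mA_i^\star\,\mA_{i+1}\cdots\mA_N = \mA_1^\star\cdots\mA_{i-1}^\star\,(\mA_i - \mA_i^\star)\,\mA_{i+1}\cdots\mA_N.
\end{equation}
Summing over $i$ reproduces the right-hand side term by term. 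The only points requiring care are the boundary terms: for $i=1$ the prefix $\mA_1^\star\cdots\mA_{i-1}^\star$ is an empty product, and for $i=N$ the suffix $\mA_{i+1}\cdots\mA_N$ is empty. These are handled by the usual convention that an empty matrix product acts as an identity factor, which is exactly consistent with the definitions of $P_0$ and $P_N$ above.

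There is essentially no substantive obstacle here: the identity is a purely algebraic telescoping fact requiring no structural property of the matrices beyond dimension compatibility for the products to be defined. An equivalent route is a short induction on $N$, factoring $\mA_1\cdots\mA_N - \mA_1^\star\cdots\mA_N^\star = (\mA_1 - \mA_1^\star)\mA_2\cdots\mA_N + \mA_1^\star(\mA_2\cdots\mA_N - \mA_2^\star\cdots\mA_N^\star)$ and applying the inductive hypothesis to the second summand; but the telescoping presentation is cleaner and avoids the index bookkeeping of induction. The main thing to verify is simply that the interpolation order (starred factors accumulated on the left, unstarred retained on the right) matches the prefactor structure in the claimed formula, which it does by construction.
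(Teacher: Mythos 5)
Your proof is correct. The paper does not actually prove this lemma---it imports it by citation from \cite[Lemma 3]{qin2024guaranteed}---and your telescoping decomposition through the hybrid products $P_i = \mA_1^\star\cdots\mA_i^\star\,\mA_{i+1}\cdots\mA_N$ is precisely the standard argument for this identity, so there is nothing to add.
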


\begin{lemma}
\label{Upper bound of subgradient of f}
Consider the loss function $f(\calX)  = \frac{1}{m}\|\calA(\calX)-\vy\|_1$, where the measurement operator $\calA$ satisfies the $\ell_1/\ell_2$-RIP with a constant $\delta_{2\overline{r}}$. Then for any $\calX$, it holds that
\begin{eqnarray}
    \label{The derivation of subgradient upper bound conclusion}
    \|\calD\|_F\leq \sqrt{2/\pi} + \delta_{2\overline{r}}, \forall \calD\in \partial f(\calX).
\end{eqnarray}
\end{lemma}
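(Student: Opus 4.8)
The plan is to bound $\|\calD\|_F$ through the variational (dual) characterization of the Frobenius norm, $\|\calD\|_F = \sup_{\|\calZ\|_F \le 1}\langle \calD, \calZ\rangle$, and then to invoke the upper half of the $\ell_1/\ell_2$-RIP from \Cref{L1 RIP}. Writing the chosen subgradient as $\calD = \frac{1}{m}\sum_{k=1}^m \text{sign}(\langle \calA_k, \calX\rangle - y_k)\calA_k$, for any test tensor $\calZ$ I would expand $\langle \calD, \calZ\rangle = \frac{1}{m}\sum_{k=1}^m \text{sign}(\langle \calA_k, \calX\rangle - y_k)\langle \calA_k, \calZ\rangle$ and use $|\text{sign}(\cdot)| \le 1$ together with the triangle inequality to obtain $\langle \calD, \calZ\rangle \le \frac{1}{m}\sum_{k=1}^m |\langle \calA_k, \calZ\rangle| = \frac{1}{m}\|\calA(\calZ)\|_1$. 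This estimate uses nothing beyond $|\text{sign}|\le 1$, so it is valid for every element of the (convex) subdifferential and matches the ``$\forall\, \calD$'' in the statement. Taking the supremum gives $\|\calD\|_F \le \sup_{\|\calZ\|_F \le 1}\frac{1}{m}\|\calA(\calZ)\|_1$.

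The second step is to feed each admissible direction into the RIP upper bound $\frac{1}{m}\|\calA(\calZ)\|_1 \le (\sqrt{2/\pi}+\delta_{2\ol r})\|\calZ\|_F$, which would immediately yield $\|\calD\|_F \le \sqrt{2/\pi}+\delta_{2\ol r}$. For any $\calZ$ lying in the rank class covered by \Cref{L1 RIP} this conclusion is immediate, and the subscript $2\ol r$ (rather than $\ol r$) signals that the intended test set is the TT-rank-$\le 2\ol r$ tensors: this is exactly the class against which $\calD$ is contracted downstream, where it is paired with differences $\calX-\calX^\star$ of two rank-$\ol r$ tensors, as in \Cref{Robust Regularity condition of full tensor} and in the descent inequality $\|\calX^{(t)}-\calX^\star-\mu_t\calD\|_F^2$.

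The step I expect to be the main obstacle is precisely the passage from this restricted set of directions to the \emph{full} unit ball in the supremum. The maximizer of $\langle \calD, \calZ\rangle$ over $\{\|\calZ\|_F \le 1\}$ is $\calZ = \calD/\|\calD\|_F$, which is generically full rank, so the RIP upper bound of \Cref{L1 RIP} does not apply to it directly. Closing the gap would require a \emph{rank-free} upper inequality $\frac{1}{m}\|\calA(\calZ)\|_1 \le (\sqrt{2/\pi}+\delta)\|\calZ\|_F$ holding uniformly over all $\calZ\in\R^{d_1\times\cdots\times d_N}$; proving this by the usual $\epsilon$-net argument demands a covering of the entire unit sphere in dimension $\prod_i d_i$, hence $m=\Omega(\prod_i d_i)$, which is far beyond the $m=\Omega(N\ol d\ol r^2\log N)$ available. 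I would therefore try to remove the high-rank part of $\calD$ before invoking \Cref{L1 RIP}: either (i) decompose $\calD$ into its component on the relevant TT-rank-$\le 2\ol r$ subspace, where the restricted RIP controls it at the $O(1)$ level, plus an orthogonal remainder, and verify that the remainder is annihilated in every place the bound is used (each such place contracts $\calD$ only against low-TT-rank tensors or applies a TT-SVD projection afterward); or (ii) sharpen the reduction so that $\calZ$ in the supremum may be taken rank-$\le 2\ol r$ without loss. The crux is reconciling the literal full-Frobenius quantity with the rank-restricted reach of the RIP, and I expect the clean $O(1)$ bound to be attainable exactly in this rank-restricted pairing sense.
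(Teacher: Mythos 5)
Your first step is, in substance, the same as the paper's. The paper argues through the Fr\'echet subdifferential: it asserts the Lipschitz estimate $|f(\calX')-f(\calX)|\le \frac{1}{m}\|\calA(\calX'-\calX)\|_1\le(\sqrt{2/\pi}+\delta_{2\ol r})\|\calX'-\calX\|_F$ for \emph{every} $\calX'$, citing the $\ell_1/\ell_2$-RIP, and then tests the subdifferential inequality \eqref{Definition of subgradient of any function} along $\calX'=\calX+t\calD$ with $t\to 0$ to conclude $\|\calD\|_F\le\sqrt{2/\pi}+\delta_{2\ol r}$. Testing along the direction $\calD$ is exactly evaluating your dual supremum at $\calZ=\calD/\|\calD\|_F$, so the two arguments coincide up to packaging. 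The step you isolate as the main obstacle---applying the rank-restricted RIP to a direction with no low-TT-rank structure---is precisely the step the paper performs without justification: \Cref{L1 RIP} furnishes the upper inequality in \eqref{L1 RIP inequality} only over tensors of bounded TT rank, whereas $t\calD$, being a generic signed combination $\frac{1}{m}\sum_{k}\epsilon_k\calA_k$ of the measurement tensors with $\epsilon_k=\mathrm{sign}(\<\calA_k,\calX\>-y_k)$, is full rank. Your refusal to take that step is not a deficiency of your attempt relative to the paper; it pinpoints a genuine gap in the paper's own proof of this lemma.

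Your quantitative diagnosis of why the gap cannot be closed cheaply is also sound. For Gaussian $\calA_k$ and a generic sign pattern with $s$ nonzero signs, the entries of $\calD=\frac{1}{m}\sum_k\epsilon_k\calA_k$ have variance of order $s/m^2$, so $\E\|\calD\|_F^2$ is of order $\frac{s\prod_i d_i}{m^2}\approx\frac{\prod_i d_i}{m}$, which far exceeds any $O(1)$ bound throughout the sample regime $m=\Theta(N\ol d\ol r^2\log N)$ of \Cref{L1 RIP}; equivalently, a rank-free $\ell_1/\ell_2$ upper bound at constant level would demand $m=\Omega(\prod_i d_i)$, as you say. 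Hence the unrestricted claim \eqref{The derivation of subgradient upper bound conclusion} is not merely unproven but false for the canonical subgradient the algorithm actually uses, and the salvageable statement is your restricted-pairing version: a bound on $\<\calD,\calZ\>$ over unit-Frobenius $\calZ$ of TT rank at most $(2r_1,\dots,2r_{N-1})$, i.e., on the restricted norm $\|\calD\|_{F,2\ol r}$ in the sense of \eqref{Definition of the restricted F norm}. That version does follow from \Cref{L1 RIP} by your argument and suffices wherever $\calD$ is contracted against low-TT-rank tensors, as in \Cref{Robust Regularity condition of full tensor}. One caution on your repair option (i): the restriction cannot stay local to this lemma, because the PSubGM expansion \eqref{The MSE relationship of different time} consumes the unrestricted quantity $\mu_t^2\|\partial f(\calX^{(t)})\|_F^2$ (and the factorized bound \eqref{PROJECTED GRADIENT DESCENT SQUARED TERM 1 to N robust L1 loss} similarly invokes the full Frobenius norm of the sign combination), so a rigorous fix must also restructure those steps, e.g., by projecting the subgradient step onto a low-TT-rank tangent cone before expanding the squared distance.
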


\begin{proof}
Recall the definition of  (Fr$\rm \acute{e}$chet) subdifferential \cite{li2020nonconvex} of $f$ at $\calX$
\begin{eqnarray}
    \label{Definition of subgradient of any function}
    \partial f(\calX) = \bigg\{\calD\in\R^{d_1\times \cdots \times d_N}: \liminf_{\calX' \to\calX} \frac{f(\calX') -  f(\calX)  -  \<\calD, \calX'  -\calX  \>  }{\|\calX'  -  \calX\|_F} \geq 0  \bigg\},
\end{eqnarray}
where each $\calD\in \partial f(\calX)$ is called a subgradient of $f$ at $\calX$.

Now for any $\calX'\in\R^{d_1\times \cdots \times d_N}$, we have
\begin{eqnarray}
    \label{The derivation of subgradient upper bound 1}
    | f(\calX') - f(\calX)  | &\!\!\!\!=\!\!\!\!& \frac{1}{m}\big| \| \calA(\calX') - \vy\|_1 - \| \calA(\calX) - \vy\|_1   \big|\nonumber\\
    &\!\!\!\!\leq\!\!\!\!&\frac{1}{m} \|\calA(\calX' - \calX) \|_1\nonumber\\
    &\!\!\!\!\leq\!\!\!\!& (\sqrt{2/\pi} + \delta_{2\overline{r}}) \| \calX' - \calX\|_F,
\end{eqnarray}
where the second inequality follows from the $\ell_1/\ell_2$-RIP of $\calA$. This further implies that
\begin{eqnarray}
    \label{The derivation of subgradient upper bound 2}
    \liminf_{\calX' \to\calX} \frac{| f(\calX') - f(\calX)|}{\|\calX' - \calX\|_F}\leq \lim_{\calX' \to\calX}\frac{(\sqrt{2/\pi} + \delta_{2\overline{r}}) \| \calX' - \calX\|_F}{\| \calX' - \calX\|_F} = \sqrt{2/\pi} + \delta_{2\overline{r}}.
\end{eqnarray}
Upon taking $\calX' = \calX + t \calD, t\to 0$ and  invoking  \eqref{Definition of subgradient of any function}, we have
\begin{eqnarray}
    \label{The derivation of subgradient upper bound 3}
    \|\calD\|_F\leq \sqrt{2/\pi} + \delta_{2\overline{r}}, \ \ \forall \calD\in \partial f(\calX).
\end{eqnarray}

\end{proof}

\begin{lemma}(\cite[Lemma 1]{LiSIAM21})
\label{NONEXPANSIVENESS PROPERTY OF POLAR RETRACTION_1}
Let $\mX^\top\mX = \mId$ and ${\bm \xi}$ on the tangent space of Stiefel manifold be given. Consider the point ${\mX}^+={\mX}+{\bm \xi}$. Then, the polar decomposition-based retraction satisfies $\text{Retr}_{\mX}({\mX}^+)={\mX}^+({{\mX}^+}^\top{\mX}^+)^{-\frac{1}{2}}$ and
\begin{eqnarray}
    \label{NONEXPANSIVENESS PROPERTY OF POLAR RETRACTION_2}
    \|\text{Retr}_{\mX}(\mX^+)-\overline{\mX}\|_F\leq\|{\mX}^+-\overline{\mX}\|_F=\|{\mX}+{\bm \xi}-\overline{\mX}\|_F
\end{eqnarray}
for any $\overline{\mX}^\top \overline{\mX} = \mId$.
\end{lemma}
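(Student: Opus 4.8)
The plan is to reduce the polar retraction to the orthogonal polar factor of $\mX^+$ and then compare the two Frobenius distances through a singular value decomposition. First I would exploit the tangent-space hypothesis: since ${\bm \xi}$ lies in the tangent space of the Stiefel manifold at $\mX$, it satisfies $\mX^\top{\bm \xi} + {\bm \xi}^\top\mX = \mzero$, so expanding gives ${\mX^+}^\top\mX^+ = (\mX+{\bm \xi})^\top(\mX+{\bm \xi}) = \mId + {\bm \xi}^\top{\bm \xi} \succeq \mId$. In particular $\mX^+$ has full column rank and every singular value of $\mX^+$ is at least $1$.

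Next, writing a thin SVD $\mX^+ = \mU\mSigma\mV^\top$ with $\mU^\top\mU = \mId$, $\mV$ orthogonal, and $\mSigma = \diag(\sigma_1,\dots,\sigma_r)$ where $\sigma_i \ge 1$, I would compute ${\mX^+}^\top\mX^+ = \mV\mSigma^2\mV^\top$, so that $({\mX^+}^\top\mX^+)^{-1/2} = \mV\mSigma^{-1}\mV^\top$ and hence $\text{Retr}_{\mX}(\mX^+) = \mX^+({\mX^+}^\top\mX^+)^{-1/2} = \mU\mV^\top$. Thus the retraction returns the orthogonal polar factor of $\mX^+$.

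Then I would expand both Frobenius norms. Using $\|\mU\mV^\top\|_F^2 = \|\ol\mX\|_F^2 = r$ and $\|\mX^+\|_F^2 = \sum_i\sigma_i^2$, and introducing $\mZ = \mU^\top\ol\mX\mV$ so that $\innerprod{\mU\mV^\top}{\ol\mX} = \trace(\mZ) = \sum_i \mZ_{ii}$ and $\innerprod{\mX^+}{\ol\mX} = \trace(\mSigma\mZ) = \sum_i \sigma_i\mZ_{ii}$, the target inequality $\|\mU\mV^\top - \ol\mX\|_F^2 \le \|\mX^+ - \ol\mX\|_F^2$ reduces after cancellation to
\[
\sum_{i=1}^r (\sigma_i - 1)\,(\sigma_i + 1 - 2\mZ_{ii}) \ge 0.
\]

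Finally, the key estimate is a bound on the diagonal of $\mZ$. Since $\mU\mU^\top \preceq \mId$ (it is an orthogonal projector) and $\ol\mX^\top\ol\mX = \mId$, I would show $\mZ^\top\mZ = \mV^\top\ol\mX^\top(\mU\mU^\top)\ol\mX\mV \preceq \mV^\top\ol\mX^\top\ol\mX\mV = \mId$, whence $\|\mZ\|\le 1$ and in particular $\mZ_{ii} \le 1$ for every $i$. Combined with $\sigma_i \ge 1$ this yields $\sigma_i + 1 - 2\mZ_{ii} \ge \sigma_i - 1 \ge 0$ as well as $\sigma_i - 1 \ge 0$, so every summand is nonnegative and the inequality follows. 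I expect the tangent-space step to be the crux: it is exactly what forces $\sigma_i \ge 1$, and without it the singular values could dip below $1$ and the termwise sign argument would collapse.
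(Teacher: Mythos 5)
Your proof is correct; note, however, that the paper contains no proof of this statement to compare against---the lemma is imported verbatim from \cite[Lemma 1]{LiSIAM21} as a technical tool. Your argument follows the same skeleton as the proof in that reference: tangency of ${\bm \xi}$ gives $\mX^\top{\bm \xi}+{\bm \xi}^\top\mX=\mzero$, hence ${\mX^+}^\top\mX^+=\mId+{\bm \xi}^\top{\bm \xi}\succeq\mId$, so every singular value of $\mX^+$ is at least $1$ and $\text{Retr}_{\mX}(\mX^+)=\mU\mV^\top$ is the orthogonal polar factor; the inequality then becomes a comparison of traces. The one place you genuinely depart from \cite{LiSIAM21} is the cross term: there the key step is the matrix H\"older/von Neumann bound $\langle\mU(\mSigma-\mId)\mV^\top,\ol{\mX}\rangle\le\sum_i(\sigma_i-1)$ (using that all singular values of $\ol{\mX}$ equal $1$), after which the difference of squared distances collapses to $\sum_i(\sigma_i-1)^2\ge 0$, whereas you establish the elementary diagonal bound $\mZ_{ii}\le 1$ for $\mZ=\mU^\top\ol{\mX}\mV$ via $\mZ^\top\mZ=\mV^\top\ol{\mX}^\top\mU\mU^\top\ol{\mX}\mV\preceq\mId$ and finish by termwise nonnegativity of $\sum_i(\sigma_i-1)(\sigma_i+1-2\mZ_{ii})$. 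Both routes are rigorous and of comparable length; yours is slightly more elementary (no appeal to von Neumann's trace inequality), and your closing remark is exactly the right diagnosis---the tangent-space condition is what forces $\sigma_i\ge 1$, and without it the polar factor need not be the closer point.
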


Finally, we introduce a new operation related to the multiplication of submatrices within the left unfolding matrices $L(\mX_i)=\begin{bmatrix}\mX_i^\top(1)  \cdots  \mX_i^\top(d_i) \end{bmatrix}^\top\in\R^{(r_{i-1}d_i) \times r_i}, i \in [N]$. For simplicity, we will only consider the case $d_i=2$, but extending to the general case is straightforward. In particular, let $\mA=\begin{bmatrix}\mA_1 \\ \mA_2 \end{bmatrix}$ and $\mB=\begin{bmatrix}\mB_1 \\ \mB_2 \end{bmatrix}$ be two block matrices, where $\mA_i\in\R^{r_1\times r_2}$ and $\mB_i\in\R^{r_2\times r_3}$ for $i=1,2$. We introduce the notation $\ol \otimes$ to represent the Kronecker product between submatrices in the two block matrices, as an alternative to the standard Kronecker product based on element-wise multiplication.
Specifically, we define $\mA\ol \otimes\mB$ as $\mA\ol \otimes\mB=\begin{bmatrix}\mA_1 \\ \mA_2 \end{bmatrix}\ol \otimes\begin{bmatrix}\mB_1 \\ \mB_2 \end{bmatrix}=\begin{bmatrix}(\mA_1\mB_1)^\top \ (\mA_2\mB_1)^\top \  (\mA_1\mB_2)^\top \  (\mA_2\mB_2)^\top \end{bmatrix}^\top$.

According to \cite[Lemma 2]{qin2024guaranteed}, we can conclude that for any  left-orthogonal TT format tensor  $\calX^\star = [\mX_1^\star,\dots, \mX_N^\star]$, we have
\begin{eqnarray}
    \label{KRONECKER PRODUCT VECTORIZATION11}
    &&\|\calX^\star\|_F = \|\text{vec}(\calX^\star)\|_2=\|L(\mX_1^\star) \ol \otimes \cdots \ol \otimes L(\mX_N^\star)\|_2 = \|L(\mX_N^\star)\|_2, \\
    \label{KRONECKER PRODUCT VECTORIZATION11 - 1}
    &&\|L(\mX_i^\star) \ol \otimes \cdots \ol \otimes L(\mX_{N-1}^\star) \ol \otimes L(\mX_N^\star)\|_2\leq \Pi_{l = i}^{N-1}\|L(\mX_l^\star)\| \|L(\mX_N^\star)\|_2 = \|L(\mX_N^\star)\|_2, \ \ \forall i\in[N-1], \\
    \label{KRONECKER PRODUCT VECTORIZATION11 - 2}
    &&\|L(\mX_i^\star) \ol \otimes \cdots  \ol \otimes L(\mX_j^\star)\|\leq \Pi_{l = i}^{j}\|L(\mX_l^\star)\| = 1, \ \ i\leq j, \ \ \forall i, j\in[N-1], \\
        \label{KRONECKER PRODUCT VECTORIZATION11 - 3}
    &&\|L(\mX_i^\star) \ol \otimes \cdots  \ol \otimes L(\mX_j^\star)\|_F\leq \Pi_{l = i}^{j-1}\|L(\mX_l^\star)\| \|L(\mX_j^\star)\|_F, \ \ i\leq j, \ \ \forall i, j\in[N-1].
\end{eqnarray}

\section{Proof of \Cref{L1 RIP}}
\label{Proof of L1 RIP in appendix}

\begin{proof}
We first compute the covering number for any low-TT-rank tensor $\calX = [\mX_1,\dots, \mX_N]\in\R^{d_1\times \cdots \times d_N}$ with ranks $(r_1,\dots, r_{N-1})$. Given that any TT format can be converted to its left-orthogonal form, we denote $\calX = [\mX_1,\dots, \mX_N]$ as the left-orthogonal format.    According to \cite{zhang2018tensor}, we can construct $\epsilon$-net $\{L(\mX_i^{(1)}), \dots, L(\mX_i^{(n_i)})  \},i\in[N-1]$  for each set of matrices $\{L(\mX_i)\in\R^{d_ir_{i-1}\times r_i},
i\in[N-1]: \|L(\mX_i)\|\leq 1\}$ ($r_0=1$) such that
\begin{eqnarray}
    \label{L1 ProofOf<H,X>forSubGaussian_proof1}
    \sup_{L(\mX_i): \|L(\mX_i)\|\leq 1}\min_{p_i\leq n_i} \|L(\mX_i)-L(\mX_i^{(p_i)})\|\leq \epsilon,
\end{eqnarray}
with the covering number $n_i\leq (\frac{4+\epsilon}{\epsilon})^{d_ir_{i-1}r_i}$.
Also, we can construct $\epsilon$-net $\{ L(\mX_N^{(1)}), \dots, L(\mX_N^{(n_N)}) \}$ for $\{L(\mX_N)\in\R^{d_Nr_{N-1}\times 1}: \|L(\mX_N)\|_2\leq 1  \}$ such that
\begin{eqnarray}
    \label{L1 ProofOf<H,X>forSubGaussian_proof2}
    \sup_{L(\mX_N): \|L(\mX_N)\|_2\leq 1}\min_{p_N\leq n_N} \|L(\mX_N)-L(\mX_N^{(p_N)})\|_2\leq \epsilon,
\end{eqnarray}
with the covering number $n_N\leq (\frac{2+\epsilon}{\epsilon})^{d_Nr_{N-1}r_N}$.
Hence, for any low-rank TT format $\calX$ with $\|\calX\|_F \leq \|\mX_N\|_F \leq 1$ derived from \eqref{KRONECKER PRODUCT VECTORIZATION11 - 1}, its covering argument is $\Pi_{i=1}^N n_i \leq (\frac{4+\epsilon}{\epsilon})^{d_1r_1+\sum_{i=2}^{N-1}d_ir_{i-1}r_i+d_Nr_{N-1}} \leq (\frac{4+\epsilon}{\epsilon})^{N\overline{d}\overline{r}^2 }$ where $\overline{r}=\max_i r_i$ and $\overline{d}=\max_i d_i$.

Without loss of the generality,  we assume that $\calX$ is in TT format with $\|\calX\|_F=1$.
For simplicity, we use $\calI$ to denote the index set $[n_1]\times \cdots \times [n_N]$. According to the construction of the $\epsilon$-net, there exists $p=(p_1,\dots, p_N)\in\calI$ such that
\begin{eqnarray}
    \label{L1 ProofOf<H,X>forSubGaussian_proof5}
    \|L(\mX_i) - L(\mX_i^{(p_i)}) \|\leq\epsilon, \ \  i\in[N-1] \ \ \  \text{and}  \ \ \  \|L(\mX_N) - L(\mX_N^{(p_N)})\|_2\leq\epsilon.
\end{eqnarray}
Taking $\epsilon=\frac{c\delta_{\overline{r}}}{N}$ with a positive constant $c$ gives
\begin{eqnarray}
    \label{L1 ProofOf<H,X>forSubGaussian_proof6}
    &\!\!\!\!\!\!\!\!&\sup_{\calX}\frac{1}{m}\|\calA(\calX -\calX^{(p)} )\|_1\nonumber\\
    &\!\!\!\!=\!\!\!\!&\sup_{[\mX_1,\dots, \mX_N]}\frac{1}{m}\sum_{k=1}^m|\<\calA_k, [\mX_1,\dots, \mX_N] - [\mX_1^{(p_1)},\dots, \mX_N^{(p_N)}]  \>|\nonumber\\
    &\!\!\!\!=\!\!\!\!&\sup_{[\mX_1,\dots, \mX_N]}\frac{1}{m}\sum_{k=1}^m|\<\calA_k, \sum_{a_1=1}^N[\mX_1^{(p_1)},\dots, \mX_{a_1}^{(p_{a_1})}-\mX_{a_1},  \dots, \mX_N]\>|\nonumber\\
    &\!\!\!\!\leq\!\!\!\!& \sup_{\calX}\frac{N \epsilon}{m}\|\calA(\calX )\|_1=  \sup_{\calX}\frac{c\delta_{\overline{r}}}{m}\|\calA(\calX )\|_1,
\end{eqnarray}
where we write $[\mX_1,\dots, \mX_N] - [\mX_1^{(p_1)},\dots, \mX_N^{(p_N)}]$ in the second line  as the sum of $N$ terms via \Cref{EXPANSION_A1TOAN-B1TOBN_1}.

According to \eqref{L1 ProofOf<H,X>forSubGaussian_proof6}, we have
\begin{eqnarray}
\label{Relationship of fixed and any tensor}
&\!\!\!\!\!\!\!\!&\sup_{\calX}\bigg|\frac{1}{m}\|\calA(\calX) \|_1 -\sqrt{2/\pi} \bigg|\nonumber\\
&\!\!\!\!\leq\!\!\!\!& \sup_{\calX^{(p)}}\bigg|\frac{1}{m}\|\calA(\calX^{(p)}) \|_1 -\sqrt{2/\pi} \bigg| + \sup_{\calX}\frac{1}{m}\|\calA(\calX -\calX^{(p)} )\|_1\nonumber\\
&\!\!\!\!\leq\!\!\!\!& \sup_{\calX^{(p)}}\bigg|\frac{1}{m}\|\calA(\calX^{(p)}) \|_1 -\sqrt{2/\pi} \bigg| + \sup_{\calX} \frac{c\delta_{\overline{r}}}{m}\|\calA(\calX) \|_1\nonumber\\
&\!\!\!\!\leq\!\!\!\!& \sup_{\calX^{(p)}}\bigg|\frac{1}{m}\|\calA(\calX^{(p)}) \|_1 -\sqrt{2/\pi} \bigg| + \sup_{\calX}c\delta_{\overline{r}} \bigg| \frac{1}{m}\|\calA(\calX) \|_1 -\sqrt{2/\pi}  \bigg| + c\delta_{\overline{r}}\sqrt{2/\pi},
\end{eqnarray}
and then it follows
\begin{eqnarray}
\label{Relationship of fixed and any tensor 1}
\sup_{\calX}\bigg|\frac{1}{m}\|\calA(\calX) \|_1 -\sqrt{2/\pi} \bigg|\leq\frac{\sup_{\calX^{(p)}}\bigg|\frac{1}{m}\|\calA(\calX^{(p)}) \|_1 -\sqrt{2/\pi} \bigg| + c\delta_{\overline{r}}\sqrt{2/\pi}}{1 - c\delta_{\overline{r}}}.
\end{eqnarray}

To finish our derivation, we need to obtain the concentration inequality with respect to $\frac{1}{m}\sum_{k=1}^m|\<\calA_k, \calX^{(p)} \>|$. For any fixed TT format tensor $\calX^{(p)}$
with $\|\calX^{(p)}\|_F = 1$, $|\<\calA_k, \calX^{(p)} \>|$ obeys standard Gaussian with mean $\sqrt{2/\pi}$ and unit variance since $\{\calA_k\}_{k=1}^m$  have i.i.d. standard Gaussian entries. Hence, based on the tail function of Gaussian random variable, we have
\begin{eqnarray}
\label{Tail function of gaussian random variable}
\P{\sup_{\calX^{(p)}}\bigg|\frac{1}{m}\sum_{k=1}^m|\<\calA_k, \calX^{(p)} \>| -\sqrt{2/\pi}  \bigg|\geq \frac{\delta_{\overline{r}}}{2}} \leq 2e^{-c_1m\delta_{\overline{r}}^2},
\end{eqnarray}
where $c_1$ is a constant.

Based on \eqref{Relationship of fixed and any tensor 1}, we can derive
\begin{eqnarray}
\label{Rip conclusion 1_1}
&\!\!\!\!\!\!\!\!&\P{ \sup_{\calX}\bigg|\frac{1}{m}\|\calA(\calX) \|_1 -\sqrt{2/\pi} \bigg|\geq \frac{\frac{\delta_{\overline{r}}}{2} + c\delta_{\overline{r}}\sqrt{2/\pi}}{1 - c\delta_{\overline{r}}}}\nonumber\\
&\!\!\!\!\leq\!\!\!\!& \P{ \frac{\sup_{\calX^{(p)}}\bigg|\frac{1}{m}\|\calA(\calX^{(p)}) \|_1 -\sqrt{2/\pi} \bigg| + c\delta_{\overline{r}}\sqrt{2/\pi}}{1 - c\delta_{\overline{r}}} \geq \frac{\frac{\delta_{\overline{r}}}{2} + c\delta_{\overline{r}}\sqrt{2/\pi}}{1 - c\delta_{\overline{r}}}}\nonumber\\
&\!\!\!\!\leq\!\!\!\!&\Pi_{i=1}^N n_i e^{1-c_1m\delta_{\overline{r}}^2}\nonumber\\
&\!\!\!\!\leq\!\!\!\!&\bigg(\frac{4+\epsilon}{\epsilon}\bigg)^{N\overline{d}\overline{r}^2 }e^{1-c_1m\delta_{\overline{r}}^2}\nonumber\\
&\!\!\!\!\leq\!\!\!\!&e^{1 - c_1m\delta_{\overline{r}}^2 + c_2 N\overline{d}\overline{r}^2 \log N},
\end{eqnarray}
where in the last line, we choose $\epsilon=\frac{c\delta_{\overline{r}}}{N}$, and $c_2$ is a positive constant. Based on \eqref{Rip conclusion 1_1}, we have
\begin{eqnarray}
\label{Rip conclusion 1}
\P{\sup_{\calX}\bigg|\frac{1}{m}\|\calA(\calX) \|_1 -\sqrt{2/\pi} \bigg|\leq \frac{\frac{\delta_{\overline{r}}}{2} + c\delta_{\overline{r}}\sqrt{2/\pi}}{1 - c\delta_{\overline{r}}}} \geq 1 - e^{1 - c_1m\delta_{\overline{r}}^2 + c_2 N\overline{d}\overline{r}^2 \log N}.
\end{eqnarray}

To guarantee that $\frac{\frac{\delta_{\overline{r}}}{2} + c\delta_{\overline{r}}\sqrt{2/\pi}}{1 - c\delta_{\overline{r}}}\leq \delta_{\overline{r}} \leq \sqrt{2/\pi}$, we can select $c=\frac{\sqrt{2\pi}}{8}$ in $\epsilon=\frac{c\delta_{\overline{r}}}{N}$. Furthermore, if $m\geq \Omega(N\overline{d}\overline{r}^2\log N/\delta_{\overline{r}}^2)$, we obtain the following result:
\begin{eqnarray}
\label{Rip conclusion 2}
\P{\sup_{\calX}\bigg|\frac{1}{m}\|\calA(\calX) \|_1 -\sqrt{2/\pi} \bigg| \leq \delta_{\overline{r}}} \geq 1 - e^{-c_3 N\overline{d}\overline{r}^2 \log N},
\end{eqnarray}
where $c_3$ is a constant. In other words, with probability at least $1 - e^{-c_3 N\overline{d}\overline{r}^2 \log N}$, it holds that
\begin{eqnarray}
    \label{Rip conclusion 3}
    (\sqrt{2/\pi}-\delta_{\overline{r}})\|\calX\|_F \leq \frac{1}{m}\|\calA(\calX)\|_1 \leq (\sqrt{2/\pi}+\delta_{\overline{r}})\|\calX\|_F
\end{eqnarray}
for any low-TT-rank tensor $\calX\in\R^{d_{1} \times \cdots \times d_{N}}$ with ranks $(r_1,\dots, r_{N-1})$.

\end{proof}

\section{Proof of \Cref{sharpness property lemma}}
\label{Proof of L1 RIP property in appendix}

\begin{proof}
We first expand $\frac{1}{m}\|\calA(\calX - \calX^\star) -\vs \|_1 - \frac{1}{m}\|\vs\|_1$ as
\begin{eqnarray}
    \label{expansion of two terms with ourlier}
    &\!\!\!\!\!\!\!\!&\frac{1}{m}\|\calA(\calX - \calX^\star) -\vs \|_1 - \frac{1}{m}\|\vs\|_1\nonumber\\
    &\!\!\!\!=\!\!\!\!& \frac{1}{m}\|\calA_{\calS^c}(\calX - \calX^\star) \|_1 + \frac{1}{m}\|\calA_{\calS}(\calX - \calX^\star) -\vs \|_1 - \frac{1}{m}\|\vs\|_1\nonumber\\
    &\!\!\!\!\geq \!\!\!\!&\frac{1}{m}\|\calA_{\calS^c}(\calX - \calX^\star) \|_1 - \frac{1}{m}\|\calA_{\calS}(\calX - \calX^\star) \|_1.
\end{eqnarray}
In the subsequent part, we focus on analyzing the lower bound of $\frac{1}{m}\|\calA_{\calS^c}(\calX) \|_1 - \frac{1}{m}\|\calA_{\calS}(\calX) \|_1$ for any tensor $\calX$ in TT format with TT ranks smaller than $2\ol r$. We can construct an $\epsilon$-net $\{ \calX^{(p)} \}$ with the covering number $(\frac{4+\epsilon}{\epsilon})^{4N\overline{d}\overline{r}^2 }$ for any low-TT-rank tensor $\calX$ with ranks $(2r_1,\dots, 2r_{N-1})$ such that \eqref{L1 ProofOf<H,X>forSubGaussian_proof6} holds.  Without loss of the generality, we assume that $\calX = [\mX_1,\dots, \mX_N]$ is in  left-orthogonal TT format with $\|\calX\|_F=1$. Then we define
\begin{eqnarray}
    \label{Rip property set}
    Y_k  =\begin{cases}
    - |\<\calA_k,  \calX^{(p)}\>| + \sqrt{2/\pi}, & k\in \calS,\\
    |\<\calA_k,  \calX^{(p)}\>| - \sqrt{2/\pi}, & k\in \calS^c.
  \end{cases}
\end{eqnarray}
Hoeffding inequality for Gaussian random variables tells that
\begin{eqnarray}
    \label{Rip property concentration inequality}
    \P{\frac{1}{m}\sum_{k=1}^m Y_k = \frac{1}{m}\|\calA_{\calS^c} (\calX^{(p)})\|_1 - \frac{1}{m}\|\calA_{\calS} (\calX^{(p)})\|_1 - (1-2p_s)\sqrt{2/\pi} \geq -\frac{\delta_{2\overline{r}}}{2} } \geq 1 - e^{-c_1 m \delta_{2\overline{r}}^2},
\end{eqnarray}
where $c_1$ is a constant.

On the other hand, $
    \frac{1}{m}\|\calA_{\calS^c}(\calX)\|_1 \geq  \frac{1}{m} \| \calA_{\calS^c}(\calX^{(p)})\|_1 - \frac{1}{m}\|\calA_{\calS^c}(\calX - \calX^{(p)})\|_1$ and $\frac{1}{m}\|\calA_{\calS}(\calX)\|_1 \leq  \frac{1}{m} \| \calA_{\calS}(\calX^{(p)})\|_1 + \frac{1}{m}\|\calA_{\calS}(\calX - \calX^{(p)})\|_1$ hold for any tensor $\calX$. Hence, we have
\begin{eqnarray}
    \label{Rip property norm property1}
    \frac{1}{m}\|\calA_{\calS^c}(\calX)\|_1 - \frac{1}{m}\|\calA_{\calS}(\calX)\|_1 \geq \frac{1}{m} \| \calA_{\calS^c}(\calX^{(p)})\|_1 - \frac{1}{m} \| \calA_{\calS}(\calX^{(p)})\|_1 - \frac{1}{m}\|\calA(\calX - \calX^{(p)}) \|_1.
\end{eqnarray}

Combing \eqref{L1 ProofOf<H,X>forSubGaussian_proof6}, we can get
\begin{eqnarray}
    \label{Rip property norm property2}
    &\!\!\!\!\!\!\!\!&\inf_{\calX}\bigg(\frac{1}{m}\|\calA_{\calS^c}(\calX)\|_1 - \frac{1}{m}\|\calA_{\calS}(\calX)\|_1\bigg) \nonumber\\
    &\!\!\!\!\geq\!\!\!\!& \inf_{\calX^{(p)}}\bigg(\frac{1}{m} \| \calA_{\calS^c}(\calX^{(p)})\|_1 - \frac{1}{m} \| \calA_{\calS}(\calX^{(p)})\|_1 \bigg) - \sup_{\calX}\frac{1}{m}\|\calA(\calX - \calX^{(p)}) \|_1\nonumber\\
    &\!\!\!\!\geq\!\!\!\!& \inf_{\calX^{(p)}}\bigg(\frac{1}{m} \| \calA_{\calS^c}(\calX^{(p)})\|_1 - \frac{1}{m} \| \calA_{\calS}(\calX^{(p)})\|_1 \bigg) - \sup_{\calX}\frac{c\delta_{2\overline{r}}}{m} \|\calA(\calX)\|_1\nonumber\\
    &\!\!\!\!\geq\!\!\!\!& \inf_{\calX^{(p)}}\bigg(\frac{1}{m} \| \calA_{\calS^c}(\calX^{(p)})\|_1 - \frac{1}{m} \| \calA_{\calS}(\calX^{(p)})\|_1 \bigg)  - c\delta_{2\overline{r}}\bigg(\frac{\delta_{2\overline{r}}/2 + c\delta_{2\overline{r}}\sqrt{2/\pi}}{1 - c\delta_{2\overline{r}}} + \sqrt{2/\pi}\bigg),
\end{eqnarray}
where the last line follows the \eqref{Rip conclusion 1} with probability $1 - e^{-c_2 N\overline{d}\overline{r}^2 \log N}$ in which $c_2$ is a positive constant.
Denote the event $F:=\{ \text{\eqref{Rip conclusion 1} is satisfied} \}$ which holds with probability $1 - e^{-c_2 N\overline{d}\overline{r}^2 \log N}$.
We can take the union bound with \eqref{Rip property concentration inequality} to conclude
\begin{eqnarray}
    \label{Rip property conclusion1}
    &\!\!\!\!\!\!\!\!&\P{ \inf_{\calX}\bigg(\frac{1}{m}\|\calA_{\calS^c}(\calX)\|_1 - \frac{1}{m}\|\calA_{\calS}(\calX)\|_1\bigg) \geq (1-2p_s)\sqrt{2/\pi} - \frac{\delta_{2\overline{r}}}{2} - c\delta_{2\overline{r}}\bigg(\frac{\delta_{2\overline{r}}/2 + c\delta_{2\overline{r}}\sqrt{2/\pi}}{1 - c\delta_{2\overline{r}}} + \sqrt{2/\pi}\bigg)\bigg| F }\nonumber\\
    &\!\!\!\!\geq\!\!\!\!& 1 - \Pi_{i=1}^N n_i 2e^{c_1m\delta_{2\overline{r}}^2}\nonumber\\
    &\!\!\!\!\geq \!\!\!\!&1 - (\frac{4+\epsilon}{\epsilon})^{4N\overline{d}\overline{r}^2 }e^{1-c_1m\delta_{2\overline{r}}^2}\nonumber\\
    &\!\!\!\!\geq\!\!\!\!& 1 - e^{1 - c_1m\delta_{2\overline{r}}^2 + c_3 N\overline{d}\overline{r}^2 \log N},
\end{eqnarray}
where $c_3$ is a positive constant.

To guarantee that $- \frac{\delta_{2\overline{r}}}{2} - c\delta_{2\overline{r}}(\frac{\delta_{2\overline{r}}/2 + c\delta_{2\overline{r}}\sqrt{2/\pi}}{1 - c\delta_{2\overline{r}}} + \sqrt{2/\pi}) \geq -\delta_{2\overline{r}}$ and $\delta_{2\overline{r}}\leq \sqrt{2/\pi}$, we set $c=\frac{\sqrt{2\pi}}{8}$.
While $m\geq \Omega(N\overline{d}\overline{r}^2\log N/\delta_{2\overline{r}}^2)$, we can obtain
\begin{eqnarray}
\label{Rip property conclusion2}
\P{\inf_{\calX}\big(\frac{1}{m}\|\calA_{\calS^c}(\calX)\|_1 - \frac{1}{m}\|\calA_{\calS}(\calX)\|_1\big) \geq (1-2p_s)\sqrt{2/\pi} - \delta_{2\overline{r}} \bigg| F} \geq 1 - e^{-c_4 N\overline{d}\overline{r}^2 \log N},
\end{eqnarray}
where $c_4$ is a constant.

In the end, we can derive
\begin{eqnarray}
\label{Rip property conclusion3}
&\!\!\!\!\!\!\!\!&\P{\inf_{\calX}\big(\frac{1}{m}\|\calA_{\calS^c}(\calX)\|_1 - \frac{1}{m}\|\calA_{\calS}(\calX)\|_1\big) \geq (1-2p_s)\sqrt{2/\pi} - \delta_{2\overline{r}}}\nonumber\\
&\!\!\!\!\geq\!\!\!\!&\P{\inf_{\calX}\big(\frac{1}{m}\|\calA_{\calS^c}(\calX)\|_1 - \frac{1}{m}\|\calA_{\calS}(\calX)\|_1\big) \geq (1-2p_s)\sqrt{2/\pi} - \delta_{2\overline{r}} \cap F}\nonumber\\
&\!\!\!\!=\!\!\!\!&\P{F}\P{\inf_{\calX}\big(\frac{1}{m}\|\calA_{\calS^c}(\calX)\|_1 - \frac{1}{m}\|\calA_{\calS}(\calX)\|_1\big) \geq (1-2p_s)\sqrt{2/\pi} - \delta_{2\overline{r}} \bigg| F}\nonumber\\
&\!\!\!\!\geq\!\!\!\!&(1 - e^{-c_2 N\overline{d}\overline{r}^2 \log N})(1 - e^{-c_4 N\overline{d}\overline{r}^2 \log N}) \geq 1 - 2 e^{-c_5 N\overline{d}\overline{r}^2 \log N},
\end{eqnarray}
where $c_5$ is a positive constant.
\end{proof}

\section{Proof of \Cref{Robust Regularity condition of full tensor}}
\label{Proof of the Robust regularity condition for full tensor}

\begin{proof}
We can apply the norm inequalities to derive
\begin{eqnarray}
    \label{The lower bound of cross term in the robust regularity condition}
    \< \partial f(\calX), \calX - \calX^\star \> & \!\!\!\!= \!\!\!\!&\frac{1}{m}\sum_{k=1}^m \text{sign}(\<\calA_k,\calX\> - y_k)(\<\calA_k, \calX - \calX^\star \> - s_k)+\frac{1}{m}\sum_{k=1}^m \text{sign}(\<\calA_k,\calX\> - y_k)s_k\nonumber\\
    &\!\!\!\!\geq\!\!\!\!&\frac{1}{m}\|\calA(\calX - \calX^\star) -\vs \|_1 - \frac{1}{m}\|\vs\|_1\nonumber\\
    &\!\!\!\!\geq\!\!\!\!& ((1-2p_s)\sqrt{2/\pi} - \delta_{2\overline{r}})\|\calX - \calX^\star\|_F,
\end{eqnarray}
where the last line follows \Cref{sharpness property lemma} for $\delta_{2\overline{r}} \leq (1-2p_s)\sqrt{2/\pi}$.

\end{proof}

\section{Proof of \Cref{Local Convergence of PGD in the sensing_Theorem}}
\label{Proof of local convergence of PGD in appendix}

\begin{proof}
We expand $\|\calX^{(t+1)} - \calX^\star\|_F^2$ as following:
\begin{eqnarray}
    \label{The MSE relationship of different time}
    &\!\!\!\!\!\!\!\!&\|\calX^{(t+1)} - \calX^\star\|_F^2= \|\text{SVD}_{\vr}^{tt}(\calX^{(t)} - \mu_t\partial f(\calX^{(t)}))  - \calX^\star  \|_F^2\nonumber\\
    &\!\!\!\!\leq\!\!\!\!& \bigg(1+ \frac{600N}{\underline{\sigma}({\calX^\star})}\|\calX^{(0)} - \calX^\star \|_F \bigg)\| \calX^{(t)} - \mu_t\partial f(\calX^{(t)})   - \calX^\star \|_F^2\nonumber\\
    &\!\!\!\!=\!\!\!\!&\bigg(1+ \frac{600N}{\underline{\sigma}({\calX^\star})}\|\calX^{(0)} - \calX^\star \|_F \bigg)\big(\|\calX^{(t)} - \calX^\star\|_F^2 -2\mu_t\< \partial f(\calX^{(t)}), \calX^{(t)} - \calX^\star \> + \mu_t^2\|\partial f(\calX^{(t)})\|_F^2\big)\nonumber\\
    &\!\!\!\!\leq\!\!\!\!& \bigg(1+ \frac{600N}{\underline{\sigma}({\calX^\star})}\|\calX^{(0)} - \calX^\star \|_F \bigg) \big(\|\calX^{(t)} - \calX^\star\|_F^2 -2\mu_t((1-2p_s)\sqrt{2/\pi} - \delta_{2\overline{r}})\|\calX^{(t)} - \calX^\star\|_F\nonumber\\
    &\!\!\!\!\!\!\!\!& + \mu_t^2(\sqrt{2/\pi} + \delta_{2\overline{r}})^2\big),
\end{eqnarray}
where we use \Cref{Perturbation bound for TT SVD} in the first inequality and subsequently employ \Cref{Robust Regularity condition of full tensor} and \Cref{Upper bound of subgradient of f} in the last line.

Under the initial condition $\|\calX^{(0)} - \calX^\star\|_F\leq \frac{c\underline{\sigma}({\calX^\star})}{600N}$ with a constant $c$, \eqref{The MSE relationship of different time} can be rewritten as
\begin{eqnarray}
    \label{The expansion of difference two tensors final conclusion1}
    \| \calX^{(t+1)}   - \calX^\star \|_F^2 \leq (1+c)\big( \|\calX^{(t)} - \calX^\star\|_F^2 -2\mu_t((1-2p_s)\sqrt{2/\pi} - \delta_{2\overline{r}})\|\calX^{(t)} - \calX^\star\|_F + \mu_t^2(\sqrt{2/\pi} + \delta_{2\overline{r}})^2 \big).
\end{eqnarray}

Based on the discussion in {Appendix} \ref{Proof of the induction in the full tensor}, we can select $\mu_t=\lambda q^t$ where $\lambda = \frac{ ((1-2p_s)\sqrt{2/\pi} - \delta_{2\overline{r}})}{2(\sqrt{2/\pi} + \delta_{2\overline{r}})^2}\|\calX^{(0)} - \calX^\star\|_F$ and $q = \sqrt{(1+c) (1-\frac{3((1-2p_s)\sqrt{2/\pi} - \delta_{2\overline{r}})^2}{4(\sqrt{2/\pi} + \delta_{2\overline{r}})^2})}$, and then obtain
\begin{eqnarray}
    \label{The expansion of difference two tensors final conclusion2}
    \| \calX^{(t)}   - \calX^\star \|_F^2 \leq \|\calX^{(0)} - \calX^\star\|_F^2 q^{2t}.
\end{eqnarray}

\end{proof}

\subsection{Proof of \eqref{The expansion of difference two tensors final conclusion2}}
\label{Proof of the induction in the full tensor}
\begin{proof}
Define
\begin{eqnarray}
    \label{the definitions of constants in the induction of full tensor1}
    &&c_1 = 2((1-2p_s)\sqrt{2/\pi} - \delta_{2\overline{r}}),\\
    \label{the definitions of constants in the induction of full tensor2}
    &&c_2 = (\sqrt{2/\pi} + \delta_{2\overline{r}})^2.
\end{eqnarray}

Now we can simplify \eqref{The expansion of difference two tensors final conclusion1} as
\begin{eqnarray}
    \label{The expansion of difference two tensors final conclusion1 simplification}
    \| \calX^{(t+1)}   - \calX^\star \|_F^2 \leq (1+c)( \|\calX^{(t)} - \calX^\star\|_F^2 -\mu_t c_1\|\calX^{(t)} - \calX^\star\|_F + \mu_t^2c_2).
\end{eqnarray}

Next, we aim to show
\begin{eqnarray}
    \label{The goal of proof in the induction of full tensor}
    \|\calX^{(t)} - \calX^\star\|_F \leq c_0q^{\frac{t}{2}}  =  c_0(1+c)^{\frac{t}{2}}p^{\frac{t}{2}}
\end{eqnarray}
where $c_0 = \|\calX^{(0)} - \calX^\star\|_F$ and $p$ is a parameter that needs to be determined. Let us therefore fix a value $x\in[0,1]$ satisfying $\|\calX^{(t)} - \calX^\star\|_F = x c_0(1+c)^{\frac{t}{2}}p^{\frac{t}{2}}$. Assume the above induction hypothesis \eqref{The goal of proof in the induction of full tensor} holds at the $t$-iteration. We need to further prove
\begin{eqnarray}
    \label{The goal of proof in the induction of full tensor t+1}
    \|\calX^{(t+1)} - \calX^\star\|_F^2 \leq  (1+c)\big( c_0^2(1+c)^{t}p^{t} x^2  -   \mu_t c_0 c_1(1+c)^{\frac{t}{2}}p^{\frac{t}{2}} x  +\mu_t^2c_2   \big) \leq c_0^2(1+c)^{t+1}p^{t+1}.
\end{eqnarray}
When we select $\mu_t = \lambda q^{t} = c_0 a (1+c)^{\frac{t}{2}}p^{\frac{t}{2}}$ where $a$ is a parameter which needs to be determined, \eqref{The goal of proof in the induction of full tensor t+1} can be simplified as
\begin{eqnarray}
    \label{The goal of proof in the induction of full tensor t+1 simplified}
    x^2  - c_1 a x + c_2a^2 \leq p.
\end{eqnarray}
Note that the left hand side of \eqref{The goal of proof in the induction of full tensor t+1 simplified} is a convex quadratic in $x$ and therefore the maximum between $[0,1]$ must occur either at $x=0$ or $x=1$.
\begin{itemize}
  \item{When selecting $x=0$, we can derive the condition $c_2a^2 \leq p$.}
  \item{When choosing $x=1$, we have $c_2a^2 - c_1 a + 1-p \leq 0$. Since $c_1,c_2>0$, we have $\frac{c_1 - \sqrt{c_1^2-4c_2(1-p)}}{2c_2} \leq a \leq \frac{c_1 + \sqrt{c_1^2-4c_2(1-p)}}{2c_2}$.}
\end{itemize}
By selecting $p = 1-\frac{3c_1^2}{16c_2}$ and $a = \frac{c_1}{4c_2}$, we can ensure that conditions $c_2a^2 \leq p$ and $c_2a^2 - c_1 a + 1-p \leq 0$ holds for $\delta_{2\ol r}\geq 0$. Hence we can respectively choose $\lambda = \frac{c_0c_1}{4c_2}$ and $q = \sqrt{(1+c) (1-\frac{3c_1^2}{16c_2})}$, which further guarantees \eqref{The goal of proof in the induction of full tensor t+1}.

\end{proof}

\section{Proof of \Cref{Robust Regularity condition of factor tensor}}
\label{Proof of the Robust regularity condition for factor tensor}
\begin{proof}
First, we provide one useful property. According to
\begin{eqnarray}
    \label{upper bound of the distance}
    \text{dist}^2(\{\mX_i\},\{ \mX_i^\star\})\leq \frac{\underline{\sigma}^2(\calX^\star) ((1-2p_s)\sqrt{2/\pi} -  \delta_{(N+1)\overline{r}})^2 }{18(2N^2 - 2N +1)(N+1+\sum_{i=2}^{N-1}r_i)(\sqrt{2/\pi} + \delta_{(N+1)\overline{r}})^2}
\end{eqnarray}
which can be obtained by $\calX\in\calC(b)$ and \Cref{LOWER BOUND OF TWO DISTANCES}, we can obtain
\begin{eqnarray}
\label{upper bound TT spctral norm robust}
    \sigma_1^2({\calX}^{\<i \>}) &\!\!\!\!= \!\!\!\!&  \|{\calX}^{\geq i+1} \|^2 \leq \min_{\mR_i\in\O^{r_i\times r_i}}2\|\mR_{i}^\top{\calX^{\star}}^{\geq i+1} \|^2 + 2\|{\calX}^{\geq i+1} - \mR_{i}^\top{\calX^{\star}}^{\geq i+1} \|^2\nonumber\\
    &\!\!\!\!\leq \!\!\!\!&2\ol{\sigma}^2(\calX^\star) + \min_{\mR_i\in\O^{r_i\times r_i}}2 \|{\calX}^{\<i \>} - {\calX^\star}^{\<i \>} + {\calX^\star}^{\<i \>} - {\calX}^{\leq i}\mR_{i}^\top{\calX^{\star}}^{\geq i+1}  \|^2\nonumber\\
    &\!\!\!\!\leq \!\!\!\!&  2\ol{\sigma}^2(\calX^\star) + 4\|\calX - \calX^\star \|_F^2 + \min_{\mR_i\in\O^{r_i\times r_i}}4 \|\mR_{i}^\top{\calX^{\star}}^{\geq i+1} \|^2 \|{\calX}^{\leq i} - {\calX^\star}^{\leq i} \mR_i \|_F^2\nonumber\\
    &\!\!\!\!\leq \!\!\!\!& 2\ol{\sigma}^2(\calX^\star) + \bigg(4 + \frac{16\ol{\sigma}^2(\calX^\star)}{\underline{\sigma}^2(\calX^\star)}\bigg)\|\calX-\calX^\star\|_F^2\nonumber\\
    &\!\!\!\!\leq \!\!\!\!& 2\ol{\sigma}^2(\calX^\star) + \frac{45N\ol{\sigma}^2(\calX^\star)}{\underline{\sigma}^2(\calX^\star)} \text{dist}^2(\{\mX_i \},\{ \mX_i^\star \})\leq \frac{9\ol{\sigma}^2(\calX^\star)}{4}, i\in[N-1],
\end{eqnarray}
where the fourth and last lines respectively follow \cite[eq. (60)]{qin2024guaranteed} and \Cref{LOWER BOUND OF TWO DISTANCES}.  Note that  $\ol{\sigma}^2(\calX) = \max_{i=1}^{N-1}\sigma_1^2({\calX}^{\<i \>})\leq \frac{9\ol{\sigma}^2(\calX^\star)}{4}$.

Then we need to define the subgradient of $F(\mX_1, \dots, \mX_N)$ as following:
\begin{eqnarray}
    \label{The subgradient of the function for factor tensor}
    \partial_{L(\mX_{i})} F(\mX_1, \dots, \mX_N) = \begin{bmatrix}\partial_{\mX_{i}(1)} F(\mX_1, \dots, \mX_N)\\ \vdots \\ \partial_{\mX_{i}(d_i)} F(\mX_1, \dots, \mX_N)  \end{bmatrix}.
\end{eqnarray}
Here the subgradient with respect to each factor $\mX_{i}(s_i)$ can be computed as
\begin{align*}
\partial_{\mX_{i}(s_{i})}F(\mX_1, \dots, \mX_N)=\frac{1}{m}\sum_{k=1}^{m} \text{sign}(\<\calA_k,\calX\>-y_k)\sum_{s_1,\ldots,s_{i-1},\atop s_{i+1},\ldots,s_N } \Big(& \calA_k(s_1,\dots,s_N)\mX_{i-1}^\top(s_{i-1})\cdots\mX_{1}^\top(s_{1})\cdot\\
& \mX_{N}^\top(s_{N})\cdots\mX_{i+1}^\top(s_{i+1}) \Big).
\end{align*}

Before analyzing the robust regularity condition, we need to define three matrices for $i\in[N]$ as follows:
\begin{eqnarray}
    \label{The definition of D1 robust L1 loss}
    \mD_1(i) &\!\!\!\!=\!\!\!\!& \begin{bmatrix} \mX_{i-1}^\top(1)\!\cdots\!\mX_{1}^\top(1) \ \ \ \   \cdots \ \ \ \   \mX_{i-1}^\top(d_{i-1})\!\cdots\!\mX_{1}^\top(d_{1})   \end{bmatrix}\nonumber\\
    &\!\!\!\!=\!\!\!\!&L^\top(\mX_{i-1})\ol \otimes \cdots \ol \otimes L^\top(\mX_1)\in\R^{r_i\times(d_1\cdots d_{i-1})},\\
    \mD_2(i) &\!\!\!\!=\!\!\!\!& \begin{bmatrix} \mX_{N}^\top(1)\cdots \mX_{i+1}^\top(1)\\ \vdots \\ \mX_{N}^\top(d_N)\cdots \mX_{i+1}^\top(d_{i+1}) \end{bmatrix}\in\R^{(d_{i+1}\cdots d_N )\times r_i },
\end{eqnarray}
where we note that $\mD_1(1) = 1$ and $\mD_2(N)=1$.
Moreover, for each $s_i\in [d_i]$, we define matrix $\mE(s_i)\in\R^{(d_1\cdots d_{i-1})\times (d_{i+1}\cdots d_N)}$ whose $(s_1\cdots s_{i-1}, s_{i+1}\cdots s_N)$-th element  is given by
\begin{eqnarray}
    \label{Each element of E_si robust L1 loss}
    \mE(s_i)(s_1\cdots s_{i-1}, s_{i+1}\cdots s_N) =  \frac{1}{m}\sum_{k=1}^{m} \text{sign}(\<\calA_k,\calX\>-y_k)\calA_k(s_1,\dots,s_N).
\end{eqnarray}

Based on the aforementioned notations, we can derive
\begin{eqnarray}
    \label{PROJECTED GRADIENT DESCENT SQUARED TERM 1 to N robust L1 loss}
    \big\| \partial_{L(\mX_{i})}F(\mX_1, \dots, \mX_N)\big\|_F^2&\!\!\!\!=\!\!\!\!& \sum_{s_i=1}^{d_i}\big\| \partial_{\mX_{i}(s_i)} F(\mX_1, \dots, \mX_N)\big\|_F^2 =\sum_{s_i=1}^{d_i}\|\mD_1(i) \mE(s_i) \mD_2(i)   \|_F^2\nonumber\\
    &\!\!\!\!\leq\!\!\!\!& \sum_{s_i=1}^{d_i}\|L^\top(\mX_{i-1})\ol \otimes \cdots \ol \otimes L^\top(\mX_1)\|^2\| \mD_2(i)\|^2 \|\mE(s_i)\|_F^2\nonumber\\
    &\!\!\!\!\leq\!\!\!\!&\|L(\mX_1)\|^2\cdots\|L(\mX_{i-1})\|^2 \|{\calX}^{\geq i+1} \|^2 \|\frac{1}{m}\sum_{k=1}^{m}\text{sign}(\<\calA_{k},\calX\> - y_k)\calA_k\|_F^2\nonumber\\
    &\!\!\!\!\leq\!\!\!\!&\begin{cases}
    \frac{9\ol{\sigma}^2(\calX^\star)}{4}(\sqrt{2/\pi} +\delta_{2\overline{r}})^2, & i\in[N-1],\\
    (\sqrt{2/\pi} +\delta_{2\overline{r}})^2, & i = N,
  \end{cases}
\end{eqnarray}
where we use \eqref{KRONECKER PRODUCT VECTORIZATION11 - 2}, $\|\mD_2(i)\| =  \|{\calX}^{\geq i+1} \| $ and $\sum_{s_i=1}^{d_i}\|\mE(s_i)\|_F^2 = \|\frac{1}{m}\sum_{k=1}^{m}\text{sign}(\<\calA_{k},\calX\> - y_k)\calA_k\|_F^2$ in the second inequality. In addition, the third inequality follows $\|{\calX}^{\geq i+1} \| = \sigma_1({\calX}^{\<i \>})\leq \frac{3 \ol{\sigma}(\calX^\star)}{2}$ and \Cref{Upper bound of subgradient of f}.

Now, we rewrite the cross term in the robust regularity condition as following:
\begin{eqnarray}
    \label{Lower BOUND OF cross TERM OF Riemannian GD IN L1 SENSING Conclusion original}
    &\!\!\!\!\!\!\!\!&\sum_{i=1}^{N} \bigg\< L(\mX_i)-L_{\mR}(\mX_i^\star),\mathcal{P}_{\text{T}_{L(\mX_i)} \text{St}}\big(\partial_{L(\mX_{i})}F(\mX_1, \dots, \mX_N)\big)\bigg\>\nonumber\\
    &\!\!\!\!=\!\!\!\!&\sum_{i=1}^{N} \bigg\< L(\mX_i)-L_{\mR}(\mX_i^\star),\partial_{L(\mX_{i})}F(\mX_1, \dots, \mX_N)\bigg\> - T\nonumber\\
    &\!\!\!\!=\!\!\!\!&\frac{1}{m}\sum_{k=1}^m \text{sign}(\<\calA_k,\calX\> - y_k)\<\calA_k, \calX - \calX^\star \> + \frac{1}{m}\sum_{k=1}^m \text{sign}(\<\calA_k,\calX\> - y_k)\<\text{vec}(\calA_k), \vh  \> - T,
\end{eqnarray}
where
\begin{eqnarray}
    \label{H_T IN THE CROSS TERM}
    \vh&\!\!\!\!=\!\!\!\!&L_{\mR}(\mX_1^\star)\ol \otimes  \cdots \ol \otimes L_{\mR}({\mX}_N^\star) - L(\mX_1) \ol \otimes \cdots \ol \otimes L(\mX_{N-1}) \ol \otimes L_{\mR}(\mX_{N}^\star)\nonumber\\
    &\!\!\!\!\!\!\!\!& +\sum_{i=1}^{N-1}L(\mX_1)\ol \otimes\cdots\ol \otimes L(\mX_{i-1})\ol \otimes ( L({\mX}_i) -L_{\mR}({\mX}_i^\star) ) \ol \otimes L(\mX_{i+1})\ol \otimes \cdots \ol \otimes L(\mX_N),\\
\label{PROJECTION ORTHOGONAL IN Stiefel UPPER BOUND in the L1 sensing orignal}
    T& \!\!\!\!=\!\!\!\! &\sum_{i=1}^{N-1}\bigg\<L(\mX_i)-L_{\mR}(\mX_i^\star), \calP^{\perp}_{\text{T}_{L(\mX_i)} \text{St}}(\partial_{L(\mX_{i})}F(\mX_1, \dots, \mX_N)) \bigg\>.
\end{eqnarray}

To get the lower bound of \eqref{Lower BOUND OF cross TERM OF Riemannian GD IN L1 SENSING Conclusion original}, we need to obtain upper bounds of \eqref{H_T IN THE CROSS TERM} and \eqref{PROJECTION ORTHOGONAL IN Stiefel UPPER BOUND in the L1 sensing orignal}. According to \cite[eq. (82)]{qin2024guaranteed}, we directly obtain
\begin{eqnarray}
    \label{H_T IN THE CROSS TERM UPPER BOUND}
    \|\vh\|_2^2 \leq \frac{9N(N-1)}{8\ol{\sigma}^2(\calX^\star)}\text{dist}^4(\{\mX_i\},\{ \mX_i^\star\}).
\end{eqnarray}
Then, we can derive
\begin{eqnarray}
\label{PROJECTION ORTHOGONAL IN Stiefel UPPER BOUND in the L1 sensing}
    T&\!\!\!\!=\!\!\!\!&\sum_{i=1}^{N-1}\bigg\<\mathcal{P}^{\perp}_{\text{T}_{L(\mX_i)} \text{St}}(L(\mX_i)-L_{\mR}(\mX_i^\star)), \partial_{L(\mX_{i})}F(\mX_1, \dots, \mX_N) \bigg\>\nonumber\\
    &\!\!\!\!\leq\!\!\!\!&\frac{1}{2}\sum_{i=1}^{N-1}\|L(\mX_i)\|\|L(\mX_i)-L_{\mR}(\mX_i^\star)\|_F^2\big\|\partial_{L(\mX_{i})}F(\mX_1, \dots, \mX_N)\big\|_F\nonumber\\
    &\!\!\!\!\leq\!\!\!\!&\frac{3\ol{\sigma}(\calX^\star)}{4}(\sqrt{2/\pi} +\delta_{2\overline{r}})\sum_{i=1}^{N-1}\|L(\mX_i)-L_{\mR}(\mX_i^\star)\|_F^2\nonumber\\
    &\!\!\!\!\leq\!\!\!\!&\frac{3}{4\ol{\sigma}(\calX^\star)}(\sqrt{2/\pi} +\delta_{2\overline{r}})\text{dist}^2(\{\mX_i\},\{ \mX_i^\star\}),
\end{eqnarray}
where the first inequality follows \eqref{PROJECTED GRADIENT DESCENT SQUARED TERM 1 to N robust L1 loss} and $\mathcal{P}^{\perp}_{\text{T}_{L(\mX_i)} \text{St}}(\cdot)$ is defined as
\begin{eqnarray}
\label{Projection orthogonal in the Stiefel gradient descent L1}
    &\!\!\!\!\!\!\!\!&\calP^{\perp}_{\text{T}_{L(\mX_i)} \text{St}}(L(\mX_i)-L_{\mR}(\mX_i^\star))\nonumber\\
    &\!\!\!\!=\!\!\!\!& L(\mX_i)-L_{\mR}(\mX_i^\star) - \calP_{\text{T}_{L(\mX_i)} \text{St}}(L(\mX_i)-L_{\mR}(\mX_i^\star))\nonumber\\
    &\!\!\!\!=\!\!\!\!&\frac{1}{2}L(\mX_i)((L(\mX_i)-L_{\mR}(\mX_i^\star))^\top L(\mX_i)+L^\top(\mX_i)(L(\mX_i)-L_{\mR}(\mX_i^\star)))\nonumber\\
    &\!\!\!\!=\!\!\!\!&\frac{1}{2}L(\mX_i)(2\mId_{r_i}-L_{\mR}^\top(\mX_i^\star)L(\mX_i)-L^\top(\mX_i)L_{\mR}(\mX_i^\star))\nonumber\\
    &\!\!\!\!=\!\!\!\!&\frac{1}{2}L(\mX_i)((L(\mX_i)-L_{\mR}(\mX_i^\star))^\top(L(\mX_i)-L_{\mR}(\mX_i^\star))).
\end{eqnarray}
Ultimately, we arrive at
\begin{eqnarray}
    \label{Lower BOUND OF cross TERM OF Riemannian GD IN L1 SENSING Conclusion}
    &\!\!\!\!\!\!\!\!&\sum_{i=1}^{N} \bigg\< L(\mX_i)-L_{\mR}(\mX_i^\star),\mathcal{P}_{\text{T}_{L(\mX_i)} \text{St}}\big(\partial_{L(\mX_{i})}F(\mX_1, \dots, \mX_N)\big)\bigg\>\nonumber\\
    &\!\!\!\!\geq\!\!\!\!& \frac{1}{m}\sum_{k=1}^m | \<\calA_k,\calX- \calX^\star\> - s_k | - \frac{1}{m}\sum_{k=1}^m |s_k| - \frac{1}{m}\sum_{k=1}^m|\<\text{vec}(\calA_k), \vh  \>| - \frac{3}{4\ol{\sigma}(\calX^\star)}(\sqrt{2/\pi} +\delta_{2\overline{r}})\text{dist}^2(\{\mX_i\},\{ \mX_i^\star\})\nonumber\\
    &\!\!\!\!\geq\!\!\!\!& ((1-2p_s)\sqrt{2/\pi} - \delta_{2\overline{r}})\|\calX - \calX^\star\|_F - (\sqrt{2/\pi} + \delta_{(N+1)\overline{r}})\|\vh\|_2  - \frac{3}{4\ol{\sigma}(\calX^\star)}(\sqrt{2/\pi} +\delta_{2\overline{r}})\text{dist}^2(\{\mX_i\},\{ \mX_i^\star\})\nonumber\\
    &\!\!\!\!\geq\!\!\!\!&\frac{\underline{\sigma}(\calX^\star)((1-2p_s)\sqrt{2/\pi} - \delta_{(N+1)\overline{r}})}{4\sqrt{2(N+1+\sum_{i=2}^{N-1}r_i)}\ol{\sigma}(\calX^\star)}\text{dist}(\{\mX_i\},\{ \mX_i^\star\})
\end{eqnarray}
where \eqref{PROJECTION ORTHOGONAL IN Stiefel UPPER BOUND in the L1 sensing} is used in the first inequality. Note that $\vh$ can be viewed as a TT format where the rank is at most $((N-1)r_1,\dots,(N-1)r_{N-1})$. Hence, we apply $\ell_1/\ell_2$-RIP and \Cref{sharpness property lemma} in the second inequality. The last line follows $\delta_{2\overline{r}}\leq\delta_{(N+1)\overline{r}}\leq (1-2p_s)\sqrt{2/\pi}$, \eqref{H_T IN THE CROSS TERM UPPER BOUND}, \Cref{LOWER BOUND OF TWO DISTANCES} and  $\text{dist}^2(\{\mX_i\},\{ \mX_i^\star\})\leq \frac{\underline{\sigma}^2(\calX^\star) ((1-2p_s)\sqrt{2/\pi} -  \delta_{(N+1)\overline{r}})^2 }{18(2N^2 - 2N +1)(N+1+\sum_{i=2}^{N-1}r_i)(\sqrt{2/\pi} + \delta_{(N+1)\overline{r}})^2}$.

\end{proof}

\section{Proof of \Cref{Local Convergence of SGD in the l1 sensing_Theorem}}
\label{Local Convergence Proof of SGD l1 Tensor Sensing}

\begin{proof}
To utilize the robust regularity condition of \Cref{Robust Regularity condition of factor tensor} in the derivation of \Cref{Local Convergence of SGD in the l1 sensing_Theorem}, we need to prove conditions in \Cref{Robust Regularity condition of factor tensor}.
Due to the retraction operation, we can guarantee that $L(\mX_i^{(t)})$ are orthonormal. In addition, we assume that
\begin{eqnarray}
    \label{upper bound of error distance any t}
    \text{dist}^2(\{\mX_i^{(t)}\},\{ \mX_i^\star\})\leq \frac{\underline{\sigma}^2(\calX^\star) ((1-2p_s)\sqrt{2/\pi} -  \delta_{(N+1)\overline{r}})^2 }{18(2N^2 - 2N +1)(N+1+\sum_{i=2}^{N-1}r_i)(\sqrt{2/\pi} + \delta_{(N+1)\overline{r}})^2},
\end{eqnarray}
which can be proven later, and following \eqref{upper bound TT spctral norm robust}, then obtain
\begin{eqnarray}
    \label{upper bound of LN t}
    \sigma_1^2({\calX^{(t)}}^{\<i \>}) \leq \frac{9\ol{\sigma}^2(\calX^\star)}{4}, i\in[N-1].
\end{eqnarray}

Next, we define the best rotation matrices as following:
\begin{eqnarray}
    \label{the definition of orthonormal matrix R}
    (\mR_1^{(t)},\dots,\mR_{N-1}^{(t)}) = \argmin_{\mR_i\in\O^{r_i\times r_i}, \atop i\in [N-1]}\sum_{i=1}^{N-1} \ol{\sigma}^2(\calX^\star)\|L({\mX}_i^{(t)})-L_{\mR}({\mX}_i^\star)\|_F^2 + \|L(\mX_N^{(t)})-L_{\mR}({\mX}_N^\star)\|_2^2.
\end{eqnarray}

Now we can prove the assumption $\text{dist}^2(\{\mX_i^{(t+1)}\},\{ \mX_i^\star\})\leq \text{dist}^2(\{\mX_i^{(t)}\},\{ \mX_i^\star\})$, expand $\text{dist}^2(\{\mX_i^{(t+1)}\},\{ \mX_i^\star\})$ and subsequently derive
\begin{eqnarray}
    \label{expansion of distance in tensor factorization-L1 loss}
    &\!\!\!\!\!\!\!\!&\text{dist}^2(\{\mX_i^{(t+1)}\},\{ \mX_i^\star\})\nonumber\\
    &\!\!\!\!=\!\!\!\!& \sum_{i=1}^{N-1} \ol{\sigma}^2(\calX^\star)\| L(\mX_i^{(t+1)})-L_{\mR^{(t+1)}}(\mX_i^\star) \|_F^2+\|L(\mX_N^{(t+1)})-L_{\mR^{(t+1)}}(\mX_N^\star)\|_2^2  \nonumber\\
    &\!\!\!\!\leq\!\!\!\!&\sum_{i=1}^{N-1} \ol{\sigma}^2(\calX^\star)\bigg\| L(\mX_i^{(t)})-L_{\mR^{(t)}}(\mX_i^\star) -\frac{\mu_t}{\ol{\sigma}^2(\calX^\star)}\mathcal{P}_{\text{T}_{L({\mX}_i)} \text{St}}\big(\partial_{L({\mX}_{i})}F(\mX_1^{(t)}, \dots, \mX_N^{(t)})\big)\bigg\|_F^2\nonumber\\
    &\!\!\!\!\!\!\!\!&+\|L(\mX_N^{(t)})-L_{\mR^{(t)}}(\mX_N^\star)-\mu_t\partial_{L({\mX}_{N})}F(\mX_1^{(t)}, \dots, \mX_N^{(t)}) \|_2^2\nonumber\\
    &\!\!\!\!=\!\!\!\!&\text{dist}^2(\{\mX_i^{(t)}\},\{ \mX_i^\star\})-2\mu_t\sum_{i=1}^{N} \bigg\< L(\mX_i^{(t)})-L_{\mR^{(t)}}(\mX_i^\star),\mathcal{P}_{\text{T}_{L({\mX}_i)} \text{St}}\big(\partial_{L({\mX}_{i})}F(\mX_1^{(t)}, \dots, \mX_N^{(t)})\big)\bigg\>\nonumber\\
    &\!\!\!\!\!\!\!\!&+\mu_t^2\bigg(\frac{1}{\ol{\sigma}^2(\calX^\star)}\sum_{i=1}^{N-1}\|\mathcal{P}_{\text{T}_{L({\mX}_i)} \text{St}}\big(\partial_{L({\mX}_{i})}F(\mX_1^{(t)}, \dots, \mX_N^{(t)})\big)\|_F^2+\|\partial_{L({\mX}_{N})}F(\mX_1^{(t)}, \dots, \mX_N^{(t)}) \|_2^2\bigg),
\end{eqnarray}
where the first inequality follows the nonexpansiveness property of \Cref{NONEXPANSIVENESS PROPERTY OF POLAR RETRACTION_1}.

Based on \eqref{PROJECTED GRADIENT DESCENT SQUARED TERM 1 to N robust L1 loss}, we can easily obtain
\begin{eqnarray}
    \label{RIEMANNIAN FACTORIZATION SQUARED TERM UPPER BOUND robust L1 loss}
    &\!\!\!\!\!\!\!\!&\frac{1}{\ol{\sigma}^2(\calX^\star)}\sum_{i=1}^{N-1}\|\mathcal{P}_{\text{T}_{L({\mX}_i)} \text{St}}\big(\partial_{L(\mX_{i})}F(\mX_1^{(t)}, \dots, \mX_N^{(t)})\big)\|_F^2+\|\partial_{L(\mX_{N})}F(\mX_1^{(t)}, \dots, \mX_N^{(t)}) \|_2^2\nonumber\\
    &\!\!\!\!\leq\!\!\!\!&\frac{1}{\ol{\sigma}^2(\calX^\star)}\sum_{i=1}^{N-1}\|\partial_{L(\mX_{i})}F(\mX_1^{(t)}, \dots, \mX_N^{(t)})\|_F^2+\|\partial_{L(\mX_{N})}F(\mX_1^{(t)}, \dots, \mX_N^{(t)}) \|_2^2\nonumber\\
    &\!\!\!\!\leq\!\!\!\!&\frac{9N-5}{4}(\sqrt{2/\pi} +\delta_{2\overline{r}})^2,
\end{eqnarray}
where the first inequality follows from the fact that for any matrix $\mB = \calP_{\text{T}_{L({\mX}_i)} \text{St}}(\mB) + \calP_{\text{T}_{L({\mX}_{i})} \text{St}}^{\perp}(\mB)$ where $\calP_{\text{T}_{L({\mX}_i)} \text{St}}(\mB)$ and $\calP_{\text{T}_{L({\mX}_{i})} \text{St}}^{\perp}(\mB)$ are orthogonal, we have $\|\calP_{\text{T}_{L({\mX}_i)} \text{St}}(\mB)\|_F^2\leq \|\mB\|_F^2$.

Hence, combing the robust regularity condition in \Cref{Robust Regularity condition of factor tensor} and \eqref{RIEMANNIAN FACTORIZATION SQUARED TERM UPPER BOUND robust L1 loss}, we have
\begin{eqnarray}
    \label{Conclusion of SGD in the L1 sensing}
    \text{dist}^2(\{\mX_i^{(t+1)}\},\{ \mX_i^\star\})&\!\!\!\!\leq\!\!\!\!&\text{dist}^2(\{\mX_i^{(t)}\},\{ \mX_i^\star\}) - \frac{\underline{\sigma}(\calX^\star)((1-2p_s)\sqrt{2/\pi} - \delta_{(N+1)\overline{r}})}{2\sqrt{2(N+1+\sum_{i=2}^{N-1}r_i)}\ol{\sigma}(\calX^\star)}\mu_t\text{dist}(\{\mX_i^{(t)}\},\{ \mX_i^\star\}) \nonumber\\
    &\!\!\!\!\!\!\!\!&+\frac{9N-5}{4}(\sqrt{2/\pi}+\delta_{(N+1)\overline{r}})^2\mu_t^2.
\end{eqnarray}

Following the same analysis of \eqref{The expansion of difference two tensors final conclusion2} in {Appendix} \ref{Proof of the induction in the full tensor}, we can set $\mu_t=\lambda q^t$ where we respectively select $$\lambda=\frac{(1-2p_s)\sqrt{2/\pi} - \delta_{(N+1)\overline{r}}}{\sqrt{2(N+1+\sum_{i=2}^{N-1}r_i)}(9N - 5)(\sqrt{2/\pi} + \delta_{(N+1)\overline{r}})^2\kappa(\calX^\star)}\text{dist}(\{\mX_i^{(0)}\},\{ \mX_i^\star\})$$ and $$q =  \sqrt{1- \frac{((1-2p_s)\sqrt{2/\pi} -  \delta_{(N+1)\overline{r}})^2 }{8(N+1+\sum_{i=2}^{N-1}r_i)(9N - 5)(\sqrt{2/\pi} + \delta_{(N+1)\overline{r}})^2\kappa^2(\calX^\star)}},$$
and then guarantee that
\begin{eqnarray}
    \label{Conclusion of SGD in the L1 sensing2}
    \text{dist}^2(\{\mX_i^{(t)}\},\{ \mX_i^\star\})\leq \text{dist}^2(\{\mX_i^{(0)}\},\{ \mX_i^\star\}) q^{2t}.
\end{eqnarray}

\paragraph{Proof of \eqref{upper bound of error distance any t}} We now prove \eqref{upper bound of error distance any t} by induction. First note that \eqref{upper bound of error distance any t} holds for $t = 0$ which can be proved by combing $\calX^{(0)}\in\calC(b)$ and \Cref{LOWER BOUND OF TWO DISTANCES}. We now assume it holds at $t = t'$, which implies that $\sigma_1^2({\calX^{(t')}}^{\<i \>}) \leq \frac{9\ol{\sigma}^2(\calX^\star)}{4}, i\in[N-1]$. By invoking \eqref{Conclusion of SGD in the L1 sensing2}, we have $\text{dist}^2(\{\mX_i^{(t'+1)}\},\{\mX_i^\star\}) \le \text{dist}^2(\{\mX_i^{(t)}\},\{ \mX_i^\star\})$. Consequently, \eqref{upper bound of error distance any t} also holds  at $ t= t'+1$. By induction, we can conclude that \eqref{upper bound of error distance any t} holds for all $t\ge 0$. This completes the proof.

\end{proof}

\section{Proof of \Cref{Error analysis of truncated spectral initialization}}
\label{proof of truncated spectral initialization}

Before analyzing the truncated spectral initialization, we first define the following restricted Frobenius norm for any tensor $\calH\in\R^{d_1\times\cdots\times d_N}$:
\begin{eqnarray}
    \label{Definition of the restricted F norm}
    \|\calH\|_{F,\ol r} = \hspace{-0.5cm} \max_{\calX\in\R^{d_1\times\cdots\times d_N}, \|\calX\|_F\leq 1, \atop {\rm rank}(\calX)=(r_1,\dots,r_{N-1}) } \hspace{-0.2cm} \<\calH,  \calX \>,
\end{eqnarray}
where $\rank(\calX)$ denotes the TT ranks of $\calX$.

Following the same analysis of \cite[eq. (89)]{qin2024guaranteed}, we have
\begin{eqnarray}
    \label{Upper bound of the truncated spectral initialization}
    \|\calX^{(0)} - \calX^\star \|_F &\!\!\!\!\leq\!\!\!\!& (1 + \sqrt{N-1}) \bigg\|\frac{1}{(1-p_s)m}\sum_{k=1}^my_k\calA_k \setI_{\{ |y_k|\leq |\vy|_{(\lceil p_s m \rceil)}  \}}-\calX^\star\bigg\|_{F,2\ol r}\nonumber\\
    &\!\!\!\!=\!\!\!\!& \max_{\calX\in\R^{d_1\times\cdots\times d_N}, \|\calX\|_F\leq 1, \atop {\rm rank}(\calX)=(2r_1,\dots,2r_{N-1})}\frac{1 + \sqrt{N-1}}{(1-p_s)m} \sum_{k\in \calS'} \big\<y_k\calA_k - \calX^\star, \calX  \big\>,
\end{eqnarray}
where $\calS' = \{k:  |y_k|\leq |\vy|_{(\lceil p_s m \rceil)}, k\in[m] \}$ and $|\calS'| = \lceil(1-p_s)m\rceil$. Since $|\vy|_{(\lceil p_s m \rceil)} \leq \max_{k}|\<\calA_k,\calX^\star \>|$ where $\<\calA_k,  \calX^\star \> \sim\calN(0,\|\calX^\star\|_F^2)$, we first have
\begin{eqnarray}
    \label{concentration inequality of gaussian random variable}
    \P{|\<\calA_k,  \calX^\star \>  | \geq t } \leq 2 e^{-\frac{t^2}{2\|\calX^\star\|_F^2}},
\end{eqnarray}
and it follows that
\begin{eqnarray}
    \label{concentration inequality of max gaussian random variable}
    \P{\max_k|\<\calA_k,  \calX^\star \>  |\leq t } &\!\!\!\!\geq\!\!\!\!&  1 - 2\lceil (1 - p_s) m \rceil e^{-\frac{t^2}{2\|\calX^\star\|_F^2}}\nonumber\\
    &\!\!\!\!\geq \!\!\!\!& 1 -  e^{1-\frac{t^2}{2\|\calX^\star\|_F^2}+ \log ((1 - p_s) m)}.
\end{eqnarray}
Then taking $t = c_1\|\calX^\star\|_F \log ((1 - p_s) m) $ with a positive constant $c_1$, with probability $1 - e^{-\Omega(\log ((1 - p_s) m))}$ we can obtain
\begin{eqnarray}
    \label{upper bound of  max gaussian random variable}
    |\vy|_{(\lceil p_s m \rceil)} \leq \max_k|\<\calA_k,  \calX^\star \>  \leq O(\|\calX^\star\|_F \log ((1 - p_s) m)).
\end{eqnarray}

Next, according to \cite[Appendix E]{qin2024guaranteed}, we can construct an $\epsilon$-net $\{\calX^{(1)}, \dots, \calX^{(p)}\}$ with covering number
\begin{eqnarray}
    \label{covering number of TT structure}
p \leq (\frac{4+\epsilon}{\epsilon})^{4N\ol d\ol r^2 }
\end{eqnarray}
for any TT format tensors $\calX$ with TT ranks $(2r_1,\dots, 2r_{N-1})$ such that
\begin{eqnarray}
    \label{relationship between fixed tensor and any tensor}
    \max_{\calX\in\R^{d_1\times\cdots\times d_N}, \|\calX\|_F\leq 1, \atop {\rm rank}(\calX)=(2r_1,\dots,2r_{N-1})}\sum_{k\in \calS'} \big\<y_k\calA_k - \calX^\star, \calX  \big\> \leq 2 \sum_{k\in \calS'} \big\<y_k\calA_k - \calX^\star, \calX^{(p)}  \big\>
\end{eqnarray}
using $\epsilon = \frac{1}{2N}$.

Note that $\E_{\calA_k}\<y_k\calA_k - \calX^\star, \calX^{(p)}  \> = \E_{\calA_k} (\< \calA_k, \calX^\star  \> +s_k)\<\calA_k, \calX^{(p)}  \> - \<\calX^\star, \calX^{(p)}  \> = \<\calX^\star, \calX^{(p)} \>  - \<\calX^\star, \calX^{(p)} \> = 0$ since each element in $\calA_k$ follows the normal distribution. In addition, $\<y_k\calA_k - \calX^\star, \calX^{(p)}  \>$ is a subgaussian random variable with subgaussian norm $\| \<y_k\calA_k - \calX^\star, \calX^{(p)}  \> \|_{\psi_2}\leq |y_k| \|\<\calA_k,  \calX^{(p)} \>\|_{\psi_2} + \|\calX^\star\|_F \|\calX^{(p)}\|_F\leq O(\log ((1 - p_s) m) \|\calX^\star\|_F)$ where we use $|y_k|\leq |\vy|_{(\lceil p_s m \rceil)}\leq O(\|\log ((1 - p_s) m) \|\calX^\star\|_F)$, $\<\calA_k,  \calX^{(p)} \> \sim\calN(0,\|\calX^{(p)}\|_F^2)$ and $\|\calX^{(p)}\|_F\leq 1$. According to the General Hoeffding’s inequality \cite[Theorem 2.6.2]{vershynin2018high}, we have
\begin{eqnarray}
    \label{concentration inequality of fixed tensor initialization}
    \P{ |\sum_{k\in \calS'} \big\<y_k\calA_k - \calX^\star, \calX^{(p)}  \big\>| \geq t} \leq 2e^{-\frac{c_2t^2}{(1-p_s)m(\log ((1 - p_s) m))^2 \|\calX^\star\|_F^2}},
\end{eqnarray}
where $c_2$ is a positive constant. Combing \eqref{relationship between fixed tensor and any tensor} with \eqref{concentration inequality of fixed tensor initialization}, we further derive
\begin{eqnarray}
    \label{concentration inequality of any tensor initialization}
    &\!\!\!\!\!\!\!\!&\P{ \max_{\calX\in\R^{d_1\times\cdots\times d_N}, \|\calX\|_F\leq 1, \atop {\rm rank}(\calX)=(2r_1,\dots,2r_{N-1})}\sum_{k\in \calS'} \big\<y_k\calA_k - \calX^\star, \calX  \big\> \geq t}\nonumber\\
    &\!\!\!\!\leq \!\!\!\!&\P{ \sum_{k\in \calS'} \big\<y_k\calA_k - \calX^\star, \calX^{(p)}  \big\> \geq \frac{t}{2}}\nonumber\\
    &\!\!\!\!\leq \!\!\!\!&\P{ |\sum_{k\in \calS'} \big\<y_k\calA_k - \calX^\star, \calX^{(p)}  \big\>| \geq \frac{t}{2}}\nonumber\\
    &\!\!\!\!\leq \!\!\!\!& (\frac{4+\epsilon}{\epsilon})^{4N\ol d\ol r^2 }e^{1 -\frac{c_2t^2}{4(1-p_s)m(\log ((1 - p_s) m))^2 \|\calX^\star\|_F^2}}\nonumber\\
    &\!\!\!\!\leq \!\!\!\!& e^{1 -\frac{c_2t^2}{4(1-p_s)m(\log ((1 - p_s) m))^2 \|\calX^\star\|_F^2} + c_3 N \ol d\ol r^2 \log N},
\end{eqnarray}
where $c_3$ is a constant and based on the assumption in \eqref{relationship between fixed tensor and any tensor}, $\frac{4+\epsilon}{\epsilon}=\frac{4+\frac{1}{2N}}{\frac{1}{2N}}=8N+1$.

Taking $t = \log ((1 - p_s) m) \|\calX^\star\|_F\sqrt{(1-p_s)m N \ol d\ol r^2 \log N  }$, with probability $1 - e ^{-\Omega(N \ol d\ol r^2 \log N)} - e^{-\Omega(\log ((1 - p_s) m))}$, we have
\begin{eqnarray}
    \label{upper bound of truncated spectral initialization}
    \|\calX^{(0)} - \calX^\star \|_F \leq O\bigg(\frac{N\ol r \log ((1 - p_s) m) \|\calX^\star\|_F\sqrt{ \ol d \log N } }{\sqrt{(1-p_s)m}}\bigg).
\end{eqnarray}

%

\end{document}